\numberwithin{equation}{section}
\theoremstyle{plain}
\theoremstyle{plain}
\newtheorem{theorem}{Theorem}[section]
\newtheorem{lemma}[theorem]{Lemma}
\newtheorem{proposition}[theorem]{Proposition}
\newtheorem{corollary}[theorem]{Corollary}
\newtheorem{definition}[theorem]{Definition}
\newtheorem{remark}[theorem]{Remark}
\newtheorem{example}[theorem]{Example}
\newtheorem{assumption}[theorem]{Assumption}
\newcommand{\bE}{\mathbb{E}}
\newcommand{\bN}{\mathbb{N}}
\newcommand{\bP}{\mathbb{P}}
\newcommand{\bR}{\mathbb{R}}
\newcommand{\bZ}{\mathbb{Z}}
\newcommand{\cE}{\mathcal{E}}
\newcommand{\cF}{\mathcal{F}}
\newcommand{\cK}{\mathcal{K}}
\newcommand{\cL}{\mathcal{L}}
\newcommand{\cP}{\mathcal{P}}
\newcommand{\cS}{\mathcal{S}}
\newcommand{\cW}{\mathcal{W}}
\newcommand{\1}{\mathbbm{1}}
\begin{document}
\selectlanguage{english}

\begin{frontmatter}
\title{Freidlin-Wentzell LDP in path space for McKean-Vlasov equations and the Functional Iterated Logarithm Law
}
\runtitle{Freidling-Wentzell LDPs in path space for MV-SDEs}

\begin{aug}
\author{\fnms{Gon\c calo} \corref{} \snm{dos Reis}\thanksref{t1,m1}
\ead[label=e1]{G.dosReis@ed.ac.uk}
},
\author{\fnms{William} \snm{Salkeld}\thanksref{m1}
\ead[label=e2]{w.j.salkeld@sms.ed.ac.uk}}
\and
\author{\fnms{Julian} \snm{Tugaut}\thanksref{m2}
\ead[label=e3]{tugaut@math.cnrs.fr}
\ead[label=u3,url]{http://tugaut.perso.math.cnrs.fr}}

\thankstext{t1}{G. dos Reis acknowledges support from the \emph{Funda{\c c}$\tilde{\text{a}}$o para a Ci$\hat{e}$ncia e a Tecnologia} (Portuguese Foundation for Science and Technology) through the project UID/MAT/00297/2013 (Centro de Matem\'atica e Aplica\c c$\tilde{\text{o}}$es CMA/FCT/UNL).}
\runauthor{G. dos Reis et al.}

\affiliation{University of Edinburgh\thanksmark{m1} and Universit\'e Jean Monnet\thanksmark{m2}}

\address{University of Edinburgh\\ 
School of Mathematics \\
Peter Guthrie Tait Road \\
Edinburgh, EH9 3FD, UK\\  
\printead{e1}\\ 
\phantom{E-mail:\ }\printead*{e2}\\
}

\address{Universit\'e Jean Monnet\\
Institut Camille Jordan\\
23 Rue du Docteur Paul Michelon\\
42023 Saint-\'Etienne, France\\
\printead{e3}\\
\printead{u3}}
\end{aug}

\begin{abstract}{} We show two Freidlin-Wentzell type Large Deviations Principles (LDP) in path space topologies (uniform and H\"older) for the solution process of McKean-Vlasov Stochastic Differential Equations (MV-SDEs) using techniques which directly address the presence of the law in the coefficients and altogether avoiding decoupling arguments or limits of particle systems. We provide existence and uniqueness results along with several properties for a class of MV-SDEs having random coefficients and drifts of super-linear growth. 

As an application of our results, we establish a Functional Strassen type result (Law of Iterated Logarithm) for the solution process of a MV-SDE. 
\end{abstract} 

\begin{keyword}[class=MSC]
\kwd[Primary ]{60F10}
\kwd[; secondary ]{60G07}
\end{keyword}

\begin{keyword}
\kwd{McKean-Vlasov equations}
\kwd{Large Deviations Principle}
\kwd{Path-Space}
\kwd{H\"older Topologies}
\kwd{super-linear growth}
\kwd{Functional Strassen law}
\end{keyword}

\end{frontmatter}

%
%
%

\section{Introduction}

In this article, we study a general class of McKean-Vlasov Stochastic Differential Equations (MV-SDEs) having drifts of polynomial growth and examine Freidlin-Wentzell type Large Deviations Principle small noise asymptotics in related path spaces, namely with the supremum- and H\"older-topologies.

MV-SDEs are more involved than classical SDEs as their coefficients depend on the law of the solution. They are sometimes referred to as mean-field SDEs and were first studied in \cite{McKean1966}. In a nutshell, these equations describe a limiting behaviour of individual particles having diffusive dynamics and which interact with each other in a ``mean-field'' sense. The analysis of stochastic particle systems and MV-SDEs interpreted as their limiting equations are of independent interest and appear widely in applications. Examples include molecular dynamics, fluid dynamics \cite{pope2000turbulent}; behaviour of large-scale interacting agents in economics or social networks or interacting neurons in biology see \cite{CarmonaDelarue2017book1,CarmonaDelarue2017book2} and references therein. Recently, there has been a vigorous growth in the literature on MV-SDEs addressing existence and uniqueness \cite{mishura2016existence}, smoothness of associated PDEs \cite{buckdahn2017mean,crisan2017smoothing}, numerical methods and many other aspects.
\medskip

We begin by reminding the reader what a Large Deviations Principle (LDP) is. The main goal of the large deviations is to calculate the probability of a rare event. In the case of stochastic processes, the idea is to find a deterministic path around which the diffusion is concentrated with high probability. As a consequence, the stochastic motion can be interpreted as a small perturbation of the deterministic path.

As a simple example, we present the idea of the large deviations principle for a classical diffusion with a constant coefficient diffusion:
\begin{equation}
\label{belier}
X_\varepsilon(t)=X(0)+{\sqrt{\varepsilon}} W(t)-\int_0^t\nabla V\left(X_\varepsilon(s)\right)ds\,,
\end{equation}
where $X(0)$ is deterministic, $W$ is a Brownian motion and $V$ is the so-called confining potential. We also introduce the deterministic path
\begin{equation}
\label{taureau}
\varphi(t)=X(0)-\int_0^t\nabla V\left(\varphi(s)\right)ds\,.
\end{equation}
Set $\nu_\varepsilon$ be the law of the diffusion $\left(X_\varepsilon(t)\right)_{t\in[0,1]}$. Then, we say that $\left(\nu_\varepsilon\right)_{\varepsilon>0}$ satisfies a large deviations principle on $C([0,1])$ equipped with the norm $\|\cdot\|_\infty$ with the rate function $I$ if and only if for any Borel set $\Gamma$, we have
\begin{equation*}
-\inf_{\phi\in\mathring{\Gamma}}I(\phi)
\leq
\liminf_{\varepsilon\to0} {\varepsilon}\log\big(\nu_\varepsilon(\Gamma)\big)
\leq
\limsup_{\varepsilon\to0} {\varepsilon}\log\big(\nu_\varepsilon(\Gamma)\big)
\leq
-\inf_{\phi\in\overline{\Gamma}}I(\phi)\,.
\end{equation*}
We will say that $I$ is a good rate function if the level set $\left\{x:I(x)\leq\alpha\right\}$ is compact for any $\alpha$. 

In the case of  \eqref{belier}, it is well known (see \cite{FreidlinWentzell2012}) that $\left(\nu_\varepsilon\right)_{\varepsilon>0}$ satisfies a LDP on $\big({C}([0,1]),\|\cdot\|_\infty\big)$ 
with the rate function $I$ defined as
\begin{equation*}
I(\phi):=\frac{1}{2}\int_0^1\big\|\,\dot\phi(t)+\nabla V\left(\phi(t)\right)\big\|^2dt\,,
\end{equation*}
if $\phi$ is absolutely continuous and such that $\phi(0)=X(0)$; we set $I(\phi):=+\infty$ otherwise. It follows that a Borel set $\Gamma$ of $C([0,1])$ which contains the deterministic path $\left(\varphi(t)\right)_{t\in[0,1]}$ in its interior is such that $\inf_{\phi\in\Gamma^0}I(\phi)=0$.
\medskip

We work with McKean-Vlasov SDEs satisfying $X_{\varepsilon}(0) = X_0$ and 
\begin{equation}
\label{eq:BasicMVSDE}
dX_\varepsilon(t)
=
b_{\varepsilon}\Big(t,X_\varepsilon(t),\cL^{X_{\varepsilon}}_t\Big)dt
+{\sqrt{\varepsilon}} \sigma_{\varepsilon}\Big(t,X_\varepsilon(t),\cL^{X_{\varepsilon}}_t \Big)dW(t),
\end{equation}
where $\cL^{X_{\varepsilon}}_t$ stands for $\textrm{Law}(X_\varepsilon(t))$. Since the law of the process is present in the coefficients, this equation is nonlinear - in the sense of McKean. Exact assumptions on $\sigma_{\varepsilon}$ and on $b_{\varepsilon}$ will be given subsequently. Let us discuss a particular case of this McKean-Vlasov diffusion (in dimension one):
\begin{equation}
\label{eq:intro:init}
X_{\varepsilon}(t)
=
X_{\varepsilon}(0)
+{\sqrt{\varepsilon}} W(t)-\int_0^tV'\big(X_\varepsilon(s)\big)ds
-\int_0^tF'\ast u_s^\varepsilon\left(X_\varepsilon(s)\right)ds,
\end{equation}
where $u_s^\varepsilon = \cL^{X_{\varepsilon}}_s$, $\sigma_{\varepsilon}(t,x,\mu):=1$, $b_{\varepsilon}(t,x,\mu):=-V'(x)-F'\ast\mu(x)$ and ``$\ast$'' is the usual convolution operator.

The motion of the process is generated by three concurrent forces. The first one is the derivative of a potential $V$ - the confining potential. The second influence is a Brownian motion $\left(W(t)\right)_{t\in\mathbb{R}_+}$. It allows the particle to move against the potential $V$. The third term - the so-called self-stabilizing term - represents the attraction between all the others trajectories. Indeed, we remark:
\begin{equation*}
F'\ast u_s^\varepsilon\big(X_\varepsilon(s)(\omega_0)\big)
=\int_{\omega\in\Omega}F'\Big(X_\varepsilon(s)(\omega_0)-X_\varepsilon(s)(\omega)\Big)d\mathbb{P}\left(\omega\right),
\end{equation*}
where $\left(\Omega,\mathcal{F},\mathbb{P}\right)$ is the underlying measurable space, see \cite{McKean,McKean1966}.

The particle $X_{\varepsilon}$ which verifies \eqref{eq:intro:init} can be seen as one particle in a continuous mean-field system of an infinite number of particles. The mean-field system that we will consider is a random $N$-dimensional dynamical system for $i\in\{1,\cdots,N\}$ where $X^{i,N}_\varepsilon(0) = X_0$ and 
\begin{equation*}
dX_{\varepsilon}^{i,N}(t)
=
{\sqrt{\varepsilon}} dB^{i,N}(t)-V'\left(X_{\varepsilon}^{i,N}(t)\right)dt-\frac{1}{N}\sum_{j=1}^NF'\left(X_{\varepsilon}^{i,N}(t)-X_{\varepsilon}^{j,N}(t)\right)dt, 
\end{equation*}
where the $N$ Brownian motions $\big(B^i(t)\big)_{t\in\mathbb{R}_+}$ are independent. Mean-field systems are the subject of a rich literature. The link between the self-stabilizing process and the mean-field system when $N$ goes to $+\infty$ is called Propagation of Chaos.  We say that there is \emph{propagation of chaos} for the system of interacting particles when the law of $k$ fixed particles $X^{i,N}$ tends to the distribution of $k$ independent particles $X$ solving \eqref{eq:intro:init} with same law when the size of the system $N$ goes to infinity, see \cite{Sznitman} under Lipschitz properties; \cite{M1996} under Lipschitz assumptions but allowing for jumps; \cite{BRTV} if $V$ is a constant; \cite{CGM} for a uniform result in time in the non-uniformly convex case. For applications, see \cite{CDPS2010} about social interactions or \cite{CX2010} about the stochastic partial differential equations. 

Another side to Propagation of Chaos are Large deviations results which quantify the rate of convergence of the empirical measure in exponential scales. Many LDP results for McKean-Vlasov SDEs exist exploring Sanov type large deviations for the $N$-particle empirical measures from the McKean-Vlasov limit. This is a huge field and a small selection of relevant references is given by \cite{DawsonGaertner1987-DG1987,dai1996mckean,budhiraja2012large,deuschel2017enhanced} (see references therein). As argued in \cite{budhiraja2012large}, these results are a kind of Freidlin-Wentzell small noise asymptotics, but they are ``small noise'' at a different level (that of measure-valued processes or path-distribution-valued random variables) compared to the usual (process level) Freidlin-Wentzell results being discussed in this work.

\subsection*{Our contributions}

We prove our results by dealing with the presence of the laws in the coefficients directly and avoiding arguments on empirical measures or approximation/convergence of measures. Moreover, our LDP result in the H\"older-topology sharpen existing ones in the classical SDE framework.

\smallskip

\textbf{Existence and uniqueness problem.} The existence problem for \eqref{eq:intro:init} has been investigated by two different methods. The first one consists in the application of a fixed point theorem, see \cite{McKean,BRTV,CGM,HerrmannImkellerPeithmann2008} in the non-convex case. The other consists in a propagation of chaos, see for example \cite{M1996}. Moreover, it has been proved in \cite[Theorem 2.13]{HerrmannImkellerPeithmann2008} that there is a unique strong solution. Further results on existence and uniqueness, but away from our setting, can be found in \cite{Carmona2016Lectures,CarmonaDelarue2017book1,CarmonaDelarue2017book2,mishura2016existence}. We highlight \cite{Scheutzow1987} for a discussion on counterexamples on uniqueness of solutions.


We work with MV-SDE with dynamics \eqref{eq:BasicMVSDE} and the work closest to ours is that of \cite{Gaertner1988}. There the author provides existence and uniqueness results for \eqref{eq:BasicMVSDE} under similar super-linear growth conditions but his methodology focuses on certain PDE arguments which force the coefficients to be deterministic, time-independent and impose a uniform ellipticity constraint on $\sigma$. Our methods are fully probabilistic in nature and lift these restrictions. We assume a random drift $b$ with of spatial superlinear growth satisfying a (non-coercive) monotonicity condition and a random possibly degenerate Lipschitz diffusion coefficient (see Assumption \ref{ass:MKSDE-MainExistTheo} below). 
\medskip

\textbf{Large Deviations.} The LDP results we present are with respect to the vanishing noise (when $\varepsilon \searrow 0$ in \eqref{eq:BasicMVSDE}) as in Freidlin-Wentzell theory. For instance, \cite{HerrmannImkellerPeithmann2008} investigates the large deviations principle for the McKean-Vlasov diffusion~\eqref{eq:intro:init} in general dimensions, assuming superlinear growth of the drift but imposing coercivity in their monotonicity condition and a constant diffusion term. In particular, they show that the family of laws $(\nu_\varepsilon)_\varepsilon$ satisfies a large deviations principle on $C([0,1])$ equipped with the uniform norm with the good rate function
\begin{equation*}
I(\phi):=\frac{1}{2}\int_0^1\big\|\dot\phi(t)+\nabla V\left(\phi(t)\right)+F'\left(\phi(t)-\varphi(t)\right)\big\|_\infty^2dt\,,
\end{equation*}
when $\phi$ is absolutely continuous such that $\phi(0)=X(0)$ and $I(\phi):=+\infty$ otherwise ($\varphi$ as in \eqref{taureau}). 

We show a similar result, in the uniform norm, for the family associated to \eqref{eq:BasicMVSDE}. However, unlike \cite{HerrmannImkellerPeithmann2008}, we assume a Lipschitz $\sigma$ coefficient (not a constant one) and we do not impose any coercivity condition (strict negativity of the monotonicity constant). For this result we combine aspects of their work jointly with \cite{DemboZeitouni2010}. 

Concerning the H\"older topologies LDP, we find inspiration in \cite{arous1994grandes}. Studying standard SDEs, the authors find a way to transfer LDP results from a coarse topology to a finer one; in their case, from supremum norms to H\"older norms. Their method, explained later, relies on establishing the following inequality: $\forall R>0, \forall \rho>0$, $\exists \delta >0$ and for $\varepsilon$ small enough (see Theorem \ref{HolderProp} below for the precise statement) 
\begin{equation}
\label{eq:Section1Skull}
\bP \Big[ \|X_{\varepsilon}^x - \Phi^x(h) \|_\alpha \geq \rho,
          \|{\sqrt{\varepsilon}} W - h\|_\infty \leq \delta
		\Big] 
			    \lesssim \exp\Big( -\frac{R}{\varepsilon}\Big),
\end{equation}
for classical SDE's where $\Phi^x(h)$ is the so-called Skeleton map (an ODE) associated with $X^X_{\varepsilon}$. This can be thought of as establishing that the probability of $X$  having a high variation in the $\|\cdot\|_\alpha$-norm given that the input signal (from the Brownian motion) is small in $\|\cdot\|_\infty$-norm is exponentially small. For this, they assume boundedness and Lipschitz properties of the drift and diffusion coefficients of the SDE $X_{\varepsilon}$ dependent only on the spatial variables.  We provide results in the same vein but for the general class of MV-SDEs with drifts of polynomial growth (see Assumption \ref{def:He-man+Skeletor}). Their conditions are stronger than our conditions so our results extend existing results in classical SDE literature. To the best of our knowledge LDPs in path space in H\"older topologies or general Besov-Orlicz spaces for MV-SDEs remains unexplored.

Our results on LDPs are of general interest and can be applied to the Monte-Carlo simulation of MV-SDEs. They can be used in the spirit of \cite{GuasoniRobertson2008} as a way to find the optimal Importance Sampling measure, see \cite{dRST2018ImportanceSampling}. 

\medskip

\textbf{Functional Strassen Law}
The final contribution of our work is a Functional Strassen Law (a type of Law of Iterated Logarithm) for the solution of a MV-SDE. Strassen's Law for a Brownian motion $W$ was originally stated in \cite{strassen1964invariance} and says for any $t$
\begin{align*}
\frac{W(nt)}{\sqrt{n\log\log(n)}} \to 0 \textrm{ in Prob.~as $n\to \infty$ but a.s.~convergence does not hold}.
\end{align*}
We show that if one replaces the Brownian motion $W$ by the solution of a MV-SDE then the result still holds; a by-product is that this statement allows one to characterize Lyapunov functions for such equations. 

In broad strokes, the essence of our proof stems from arguments in \cite{baldi1986large} and is about showing that the set of rescaled paths is relatively compact in the path space topology which implies convergence in probability, but that the set of limit points of this set (connected to the Skeleton of MV-SDEs) is uncountable which implies the failure of almost sure convergence. In \cite{baldi1992large} the authors show Strassen's result in H\"older topologies for the Brownian motion. 

The work closest to ours is \cite{baldi1986large}. A similar result is shown for standard SDEs with time-independent uniformly Lipschitz and bounded coefficients. These conditions are much stronger than the conditions we impose (roughly our Assumption \ref{ass:MKSDE-MainExistTheo} but with $b,\sigma$ deterministic, time-independent and $\sigma$ bounded), and hence our results  extends the existing results of the classical SDE literature.

In methodological terms, we recast the notion of the re-scaling operators used in \cite{baldi1986large} to fit the MV-SDE setting (our Definition 5.2 \& 5.4) and most notably so that they act on the process and law in tandem and in the ``right way''. After this build-up, we prove our main result as described above.

\bigskip

\textbf{Our contribution in view of the ``decoupling argument''.} From a methodological point of view, many results of standard SDEs can be carried forward to the MV-SDE framework using the so-called ``decoupling argument''. The latter, is just that after establishing existence and (crucially) uniqueness for a MV-SDE, one can freeze the law (via an independent copy) and the dynamics that remains is that of a standard SDE with an added time dependency. As long as the new time dependency has the right properties one can apply most of the known results of standard SDE and transfer them to the MV-SDEs setting. Concerning the LDPs, this topic is discussed in Section \ref{Section:DecouplingArgument}; concerning the Functional Strassen results see Remark \ref{rem:NoDecoupling}.

\bigskip
This work is organized as follows.

 In Section \ref{sec:preliminaries} we introduce this work's notation and in Section~\ref{sec:MV-SDEsexist+uniq} we prove the existence/uniqueness results as well as deriving properties of the associated dynamics. The LDP results appear in Section \ref{sec:LPDresults}. In Section \ref{sec:StrassenSection}  we establish a Functional Iterated Logarithm law (Strassen's law) for the solution of the MV-SDE. Some auxiliary results, including extensions of some other known ones, are provided in the Appendix.

\section{Preliminaries}
\label{sec:preliminaries}

\subsection{Notation}

We denote by $\bN=\{1,2,\cdots\}$ the set of natural numbers; $\bZ$ and $\bR$ denote the set of integers and reals respectively; $\bR^+=[0,\infty)$. By $a \lesssim b$ we denote the relation $a \leq C\, b$ where $C>0$ is a generic constant independent of the relevant parameters and may take different values at each occurrence. By $\lfloor x \rfloor$ we denote the largest integer less than or equal to $x$. Let $A$ be a $d\times d'$ matrix, we denote the Transpose of $A$ by $A^T$. 

Let $f:\bR^d \to \bR$ be a differentiable function. Then we denote $\nabla f$ to be the gradient operator and $H[ f]$ to be the Hessian operator. $\partial_{x_i}$ is the 1st partial derivative with relation to the $i$-th position.

\subsubsection*{Probability}

Let $0<T<\infty$. Let $(\Omega, \cF, \bP)$ be a probability space carrying a $d'$-dimensional Brownian Motion on the interval $[0,T]$. The Filtration on this space satisfies the usual assumptions. We denote by $\bE$ and $\bE[\cdot|\cF_t]$ the usual expectation and conditional expectation operator respectively. For a random variable $X$ (RV in short) we denote its probability distribution (or Law) by $\cL^X=\bP \circ X^{-1}$; the law of a process $(Y(t))_{t\in[0,T]}$  at time $t$ is denoted by $\cL_t^Y=\bP \circ [Y(t)]^{-1}$.

Let $L^{p}(\cF_t,\bR^d)$, $t\in [0,T]$, is the space of $\bR^d$-valued $\cF_t$-measurable RVs $X$ with norm  $\|X\|_{L^p} = \bE[\, |X|^p]^{1/p} < \infty$; $L^\infty$ refers to the subset of essentially bounded RVs. $\cS^{p}([0,T])$ is the space of $\bR^d$-valued measurable $\cF$-adapted processes $Y$ satisfying $\|Y \|_{\cS^p} =
\bE[\sup_{t\in[0,T]}|Y(t)|^p]^{1/p}
<\infty$; $\cS^\infty$ refers to the subset of $\cS^{p}(\bR^d)$ of absolutely
uniformly bounded processes. 


\subsubsection*{Other spaces and norms}

We set $C([0,T])$ as the space of continuous functions $f:[0,T]\to \bR$ endowed with the uniform norm $\|\cdot\|_\infty$. 
For the space of continuous functions on the interval $[0,T]$ and $\alpha\in(0,1)$, we define the uniform and the H\"older norm of a function $\psi$
\begin{align*}
\|\psi\|_\infty = \sup_{t\in[0,T]} |\psi(t)| 
\qquad \textrm{and}\qquad
\|\psi\|_\alpha = \sup_{s,t\in [0,T]} \frac{|\psi(t)-\psi(s)|}{|t-s|^\alpha}.
\end{align*}
With $\|\cdot\|_\alpha$ we define the space of $\alpha$-H\"older continuous functions $f:[0,T] \to \bR$ by $C^\alpha([0,T],\bR)$; a ball centered on the map $\psi$ and with radius $r>0$ in this topology $C^\alpha([0,T],\bR)$ is denoted as $B_\alpha(\psi, r)$; we use $B_\infty(\psi, r)$ to denote the same ball on the topology of the $\|\cdot\|_\infty$-norm. 

We define, for $t\in [0,T]$, the restricted norm $\|\cdot\|_{\alpha, t}$ and $\|\cdot\|_{\infty, t}$ based on $\|\cdot\|_{\alpha}$ and $\|\cdot\|_{\infty}$ such that $\|\cdot\|_{\alpha, T}=\|\cdot\|_{\alpha}$, $\|\cdot\|_{\infty, T}=\|\cdot\|_{\infty}$ and is defined as
\begin{align*}
\|f\|_{\infty, t} =\sup_{0\leq s \leq t} |f(s)|
\qquad\textrm{and}\qquad
\|f\|_{\alpha, t} = \sup_{0\leq r<s\leq t} \frac{|f(s)-f(r)|}{|s-r|^\alpha}.
\end{align*}
This is similar to the H\"older/Supremum  norm and they are also a monotone increasing function with respect to $t$. It is also clear that  $\forall\, \psi\in C^\alpha([0,T])$ with $\psi(0)=0$ we have $\|\psi\|_{\infty,t}\leq \|\psi\|_\infty \leq \|\psi\|_{\alpha}$ and $\|\psi\|_{\infty,t} \leq \|\psi\|_{\alpha,t} \leq \|\psi\|_{\alpha}$ (see Lemma \ref{Lemma3}). 

Let $L^2([0,T])$ denote the space of square integrable functions $f:[0,T]\to \bR$ satisfying $\|f\|_2:= \big(\int_0^T |f(r)|^2dr\big)^{1/2}<\infty$. Let $H$ be the usual Cameron-Martin Hilbert space for Brownian motion; the space of all absolutely continuous paths on the interval $[0,T]$ which start at $0$ and have a derivative almost everywhere which is $L^2([0,T])$ integrable, 
$$
H:= \Big\{h:[0,T]\mapsto \bR,\ h(0)=0,\ h(\cdot)=\int_0^\cdot \dot{h}(s)ds ;\ \dot{h}\in L^2([0,T])\Big\}.
$$
It is easy to see that if $h\in H$ then $h(0)=0$,  $h\in C^{\frac12}([0,T])$ and $\|h\|_\infty \leq \|h\|_{\frac12} \leq \|\dot h\|_{2}$.


\subsection{The Wasserstein metric}

In this section we introduce the Wasserstein metric and some results related to it, for in-depth treatments we refer the reader to \cite{Villani2009} or \cite[Chapter 5]{CarmonaDelarue2017book1}.  Consider a measurable space $(E,\cE)$ and let $\cP(E)$ be the class of probability measures in this space. Let $k\in \bN$, let $\cP_k(E)$ be the space of probability distributions on $(E, \cE)$ with finite $k$-th moments. The Dirac delta measure concentrated at a point $x\in E$ is denoted by $\delta_x$. We define a metric on the space of distributions. 
\begin{definition}[Wasserstein metric]
Let $E$ be a complete, separable metric space with metric $d:E\times E \to \bR^+$ and $\sigma$-algebra $\cE$. Let $\mu, \nu \in\cP_2(E)$. We define the Wasserstein distance to be
$$
W^{(2)}(\mu, \nu) = \inf\Big\{ \Big( \int_{E^2} d(x, y)^2\pi(dx, dy)\Big)^{1/2}; \pi\in \cP(E\times E)\Big\},
$$
where $\mu(A) = \int_{E^2} \chi_A(x) \pi(dx,dy)$ and $\nu(B) = \int_{E^2} \chi_B(y) \pi(dx,dy)$. 
\end{definition}
The Wasserstein metric is a metric and it induces a topology on $\cP_2(E)$. This has been shown to be the topology of weak convergence of measure together with the convergence of all moments of order up to $2$. It is important to define the Wasserstein distance for a generic complete separable metric space because later on we will be interchanging between measures on $\bR^d$ and $C([0,T];\bR^d)$. In order to distinguish between these two types of objects, we denote $m\in \cP_2(C([0,T];\bR^d))$ and $m_t\in \cP_2(\bR^d)$ and we define for $A\subset \bR^d$
$$
m_t(A) = \int_{C([0,T];\bR^d)} \1_{\big\{x(\cdot)\in C([0,T];\bR^d);\, x(t)\in A\big\}}(x)m(dx).
$$

If one needed a metric on the entire space $\cP(E)$ rather than the subset $\cP_2(E)$, one could use the \emph{Modified Wasserstein Distance}
$$
W^{(0)} (\mu, \nu) = \inf\Big\{ \int_{E^2} \Big[1\wedge d(x, y)\Big] \pi(dx, dy); \pi\in \cP(E\times E)\Big\}.
$$
where $\pi$ has marginals $\mu$ and $\nu$ as before. This metric induces that of weak convergence on $\cP(E)$. 

\begin{definition}
Let $\cP_2(E)$ be the set of all probability distributions (measures) on the separable vector space $E$ with finite second orders. Endow this set with two operators called addition 
$+_{\cP_2}:\cP_2(E)\times \cP_2(E) \to \cP_2(E)$ and scalar multiplication $\times_{\cP_2}:\bR^d \times \cP_2(E)\to \cP_2(E)$ such that $\forall \mu, \nu\in \cP_2(E)$, $c\in \bR^d$ and $A\subset E$ we have
\begin{align*}
(\mu +_{\cP_2} \nu)[A]=\int_E \mu(y-A) \nu(dy) 
\qquad \textrm{and}\qquad
( c \times_{\cP_2} \mu)[A]= \mu\Big[\frac{A}{c}\Big].
\end{align*}
These operators satisfy the vector axioms and so they form a vector space. 
\end{definition}

These vector operators are more intuitive if one thinks of the set of probability distributions as the set of all random variables on $E$. These measure operators represent addition and scalar multiplication of independent random variables with respect to the vector operators within the space $E$. This vector space, like all vector spaces, has a $0$ element. This is the delta distribution centered at the $0$ element of $E$, $\delta_0$. The convolution of the delta distribution with any other measure is that measure and it remains constant under stretches and compressions of the domain centered around $0$. 


The next result is a simple computation which we evaluate for the benefit of the reader.
\begin{lemma}
\label{lemma:WassersteinAgainstDirac}
Take $\delta_0$. Then for any $\mu \in \cP_2(E)$ we have $W^{(2)}(\mu, \delta_0) = \big(\int_E y^2 \mu(dy)\big)^{1/2}$. 
\end{lemma}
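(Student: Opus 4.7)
The plan is to exploit the fact that when one of the two marginals of the coupling $\pi$ is a Dirac mass, the feasible set in the definition of $W^{(2)}$ collapses to a single element, so the infimum becomes trivial and only a direct integral computation remains.

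First, I would show that any $\pi\in\cP(E\times E)$ with marginals $\mu$ and $\delta_0$ is necessarily concentrated on the slice $E\times\{0\}$. Indeed, the marginal constraint $\int_{E^2}\chi_B(y)\pi(dx,dy)=\delta_0(B)$ for every Borel $B\subset E$ forces $\pi\big(E\times(E\setminus\{0\})\big)=0$. From here one identifies $\pi$ explicitly: for every product set $A\times B$ one computes $\pi(A\times B)=\mu(A)\,\delta_0(B)$, so $\pi=\mu\otimes\delta_0$ is the unique admissible coupling. Equivalently, $\pi$ is the pushforward of $\mu$ under the embedding $x\mapsto (x,0)$.

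Second, with this unique coupling identified, the infimum in the definition of $W^{(2)}$ degenerates to a single integral. Using the change of variables above,
$$
W^{(2)}(\mu,\delta_0)^2
=\int_{E^2} d(x,y)^2\,\pi(dx,dy)
=\int_E d(x,0)^2\,\mu(dx),
$$
and, interpreting $d(x,0)^2$ as the squared norm $x^2$ in the ambient vector-space notation used in the statement, taking square roots yields the claim.

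There is no genuine obstacle here: the entire content of the lemma is the observation that a Dirac mass admits a unique coupling with any probability measure, a fact that can be seen directly from the marginal constraint. The only care needed is the measure-theoretic justification that no mass can sit off the fiber $E\times\{0\}$, which is immediate from the definition of a marginal.
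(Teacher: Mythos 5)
Your proof is correct and reaches the same key fact as the paper, namely that the only admissible coupling of $\mu$ and $\delta_0$ is the product measure $\mu\otimes\delta_0$, after which the infimum degenerates and the identity is a direct integral computation. The mechanism you use is slightly different: you argue purely at the level of the coupling measure, observing from the marginal constraint that any $\pi$ must charge only the fiber $E\times\{0\}$ and hence equals $\mu\otimes\delta_0$. The paper instead phrases the argument probabilistically: it introduces random variables $X\sim\delta_0$ and $Y\sim\mu$, notes that $\sigma(X)$ is the trivial $\sigma$-algebra $\{\Omega,\varnothing\}$, deduces independence of $X$ and $Y$, and concludes that the joint law is the product. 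Both routes establish uniqueness of the coupling; yours is arguably more self-contained since it does not require passing from measures to random variables, while the paper's has a more probabilistic flavor consistent with the style of the surrounding text.
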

\begin{proof}
Consider a random variable with law $\delta_0$. We have $X:\Omega \to E$ with $\bP[X\in A]=\delta_0(A)$ for any $A\subset E$. The $\sigma$-algebra generated by $X$ is just $\{\Omega, \varnothing\}$. 
Let $\mu\in\cP_2[E]$ be the law of a random variable $Y:\Omega \to E$ which generates a $\sigma$-algebra that $X$ will be measurable with respect to. For any $B\in \sigma(Y)$, we have that 
$\bP[\Omega \cap B] = \bP[B] = 1 \bP[B] = \bP[\Omega]\bP[B]$ and $\bP[B\cap \varnothing] = \bP[\varnothing] = 0 = \bP[B]\bP[\varnothing]$. Hence $\sigma(X)$ and $\sigma(Y)$ are independent. 
Therefore we have that the joint density function of $X$ and $Y$ is just $\mu(dy)\delta_0(dx)$ and the conclusion follows. 
\end{proof}



\section{McKean-Vlasov equations with locally Lipschiz coefficients}
\label{sec:MV-SDEsexist+uniq}

\subsection{Existence and Uniqueness of Solutions}


We start with a slight generalization of the existence and uniqueness result under Lipschitz conditions in \cite[Theorem 1.7]{Carmona2016Lectures}. Let $W$ be a $d'$-dimensional Brownian motion and take the progressively measurable maps $b:[0,T] \times \Omega \times \bR^d \times\cP_2(\bR^d) \to \bR^d$ and $\sigma:[0,T] \times \Omega \times \bR^d \times \cP_2(\bR^d) \to \bR^{d\times d'}$.

We introduce, for $0\leq t\leq T<\infty$ the dynamics of a process $Y$ as
\begin{align}
\label{eq:MKSDE-MainExistTheo}
dY(t) = b\big(t,Y(t), \cL_t^Y\big)dt + \sigma\big(t,Y(t), \cL_t^Y\big)dW(t),
\end{align}
for $Y(0) \in L^p(\cF_0; \bR^d; \bP)$ and where $\cL_t^Y$ denotes the Law of $Y(t)$.
\begin{theorem}
Suppose that $b$ and $\sigma$ are integrable in the sense that 
\begin{align*}
\bE\Big[\Big( \int_0^T |b(t, \omega, 0, \delta_0)| dt\Big)^2 \Big] <\infty 
\quad \textrm{and}\quad
\bE\Big[\int_0^T |\sigma(t, \omega, 0, \delta_0)|^2 dt \Big] <\infty ,
\end{align*}
 and Lipschitz in the sense that $\exists L>0$ such that $\forall t \in[0,T]$, $\forall \omega \in \Omega$, $\forall x, x'\in \bR^d$ and $\forall \mu, \mu'\in \cP_2(\bR^d)$ we have that
\begin{align*}
|b(t, \omega, x, \mu)-b(t, \omega, x',\mu')| + |\sigma(t, \omega, x, \mu)&-\sigma(t, \omega, x', \mu')|\\&\leq L(|x-x'| + W^{(2)}(\mu, \mu') ).
\end{align*}
Suppose further that $X(0)\in L^2(\Omega, \cF_0, \bP; \bR^d)$ is a square integrable random variable which is independent of the Brownian motion. Then there exists a unique solution for $Y\in \cS^2([0,T]; \bR^d)$ to the MV-SDE \eqref{eq:MKSDE-MainExistTheo} and $\cL_0^Y\in \cP_2(\bR^d)$ where $\cL_t^Y$ is the probability distribution of the random variable $Y(t)$. 
\end{theorem}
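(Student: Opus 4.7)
The plan is to set up a Picard-type fixed point argument on the Banach space $\cS^2([0,T];\bR^d)$. Given a candidate process $X \in \cS^2([0,T];\bR^d)$ with marginal laws $(\cL_t^X)_{t\in[0,T]} \in \cP_2(\bR^d)$, define $\Phi(X) = Y$ to be the unique strong solution of the classical (law-frozen) SDE
\begin{equation*}
dY(t) = b\bigl(t, Y(t), \cL_t^X\bigr)dt + \sigma\bigl(t, Y(t), \cL_t^X\bigr)dW(t), \qquad Y(0) = X(0).
\end{equation*}
Since $b(t,\omega,\cdot,\cL_t^X)$ and $\sigma(t,\omega,\cdot,\cL_t^X)$ are Lipschitz in the spatial variable (uniformly in $t,\omega$) and satisfy the requisite linear growth from the Lipschitz property combined with the integrability of $b(\cdot,\cdot,0,\delta_0)$ and $\sigma(\cdot,\cdot,0,\delta_0)$, standard SDE theory produces a unique $Y \in \cS^2([0,T];\bR^d)$, so $\Phi$ is well-defined as a self-map of $\cS^2([0,T];\bR^d)$.

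Next I would establish a stability estimate. For two inputs $X, X' \in \cS^2$, let $Y = \Phi(X)$, $Y' = \Phi(X')$ (sharing the same initial condition and Brownian motion). Applying the Burkholder-Davis-Gundy inequality together with the Lipschitz assumption on $b$ and $\sigma$ yields
\begin{equation*}
\bE\Bigl[\sup_{s\le t}|Y(s)-Y'(s)|^2\Bigr] \le C\int_0^t \bE\bigl[|Y(s)-Y'(s)|^2\bigr]\,ds + C\int_0^t W^{(2)}\bigl(\cL_s^X,\cL_s^{X'}\bigr)^2\,ds,
\end{equation*}
for a constant $C$ depending on $L$ and $T$. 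The key point enabling a contraction is the elementary bound $W^{(2)}(\cL_s^X,\cL_s^{X'})^2 \le \bE[|X(s)-X'(s)|^2]$, obtained by using the joint law of $(X(s),X'(s))$ itself as an admissible coupling. Combining this with Gr\"onwall's inequality gives
\begin{equation*}
\bE\Bigl[\sup_{s\le t}|Y(s)-Y'(s)|^2\Bigr] \le C'\int_0^t \bE\Bigl[\sup_{r\le s}|X(r)-X'(r)|^2\Bigr]\,ds.
\end{equation*}

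From this linear-in-time contraction I would conclude in one of two standard ways. Either iterate the estimate $n$ times to obtain the bound $(C'T)^n/n!$ times $\|X-X'\|_{\cS^2}^2$, so that $\Phi^n$ is a strict contraction for $n$ large and hence $\Phi$ admits a unique fixed point by Banach's theorem; or, equivalently, endow $\cS^2([0,T];\bR^d)$ with the equivalent weighted norm $\|Z\|_\lambda^2 := \sup_{t\in[0,T]} e^{-\lambda t}\bE[\sup_{s\le t}|Z(s)|^2]$ and pick $\lambda$ large enough to make $\Phi$ a contraction in a single shot. Either route delivers a unique $Y \in \cS^2([0,T];\bR^d)$ solving \eqref{eq:MKSDE-MainExistTheo}, and the required integrability $\cL_t^Y \in \cP_2(\bR^d)$ follows immediately from $Y \in \cS^2$. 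The only subtle point---and the one I would be most careful about---is the interplay between the measure argument and the process: the Wasserstein-to-$L^2$ coupling bound is what decouples the law dependence into a standard SDE estimate and makes the whole fixed-point scheme go through without any auxiliary structure on the measure argument.
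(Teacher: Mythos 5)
Your proof is correct, and it supplies precisely what the paper leaves out: for this theorem the paper gives no argument at all, citing \cite[Theorem 1.7]{Carmona2016Lectures} and merely remarking that a close inspection shows the integrability hypothesis there can be relaxed to $\bE[(\int_0^T|b(t,\omega,0,\delta_0)|\,dt)^2]<\infty$. Your route also differs from the one the paper does write out for the locally Lipschitz generalization (Theorem \ref{theo:LocLipMcK-V.ExistUniq}): there the fixed point lives on the space of laws, via $\Xi(\mu)=\cL^{Y^\mu}$ on $\cP_2(C([0,T];\bR^d))$, with the contraction measured in the Wasserstein distance and closed by iterating to produce the factor $K^jT^j/j!$; you contract directly on the process space $\cS^2([0,T];\bR^d)$, converting the law dependence into a process estimate through the coupling bound $W^{(2)}(\cL_s^X,\cL_s^{X'})^2\le\bE[|X(s)-X'(s)|^2]$. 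The two schemes are two sides of the same coin — your coupling inequality is exactly what passes between a contraction on processes and a contraction on their marginal laws — and both of your closing devices (iterated contraction or the exponentially weighted norm) are standard and valid. What the measure-space formulation buys is robustness: it is the version that survives when global Lipschitzness in space is weakened to local Lipschitz plus monotonicity, since the frozen SDE is then solved by monotonicity methods rather than Lipschitz theory; under the hypotheses of the present theorem your process-space version is fully adequate. Two points you should make explicit in a final write-up: (i) when quoting classical SDE theory for the frozen equation, use the variant that only requires $\bE[(\int_0^T|b(t,\omega,0,\delta_0)|\,dt)^2]<\infty$ (drift integrable in time, square integrable in $\omega$), which your map $\Phi$ does satisfy since $W^{(2)}(\cL_t^X,\delta_0)=\bE[|X(t)|^2]^{1/2}\le\|X\|_{\cS^2}$ — this is exactly the weakening the paper alludes to; and (ii) for uniqueness, note that every $\cS^2$ solution of \eqref{eq:MKSDE-MainExistTheo} is a fixed point of $\Phi$, so uniqueness of the fixed point indeed yields uniqueness of the solution.
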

\begin{proof}
For $b(\cdot,0,\delta_0)$ satisfying
$\bE[\int_0^T |b(t, \omega, 0, \delta_0)|^2 dt ] <\infty$ the result is known e.g.~\cite[Theorem 1.7]{Carmona2016Lectures}. A close inspection of that proof shows that this condition is not sharp. In particular, the result holds with the slightly weaker integrability condition found in the statement of the theorem we present here. The verification is straightforward and we do not carry it out.
\end{proof}

We extend the previous result to the locally Lipschitz case, see \cite{mishura2016existence} for other results. We work with general monotonicity assumptions without imposing coercivity restrictions. We also sharpen the integrability assumptions and leave it to the reader to verify that the proof in \cite{Carmona2016Lectures} can be sharpened. 
\begin{assumption}
\label{ass:MKSDE-MainExistTheo}
Let $p\geq 2$. The progressively measurable maps $b:[0,T] \times \Omega \times \bR^d \times\cP_2(\bR^d) \to \bR^d$ and $\sigma:[0,T] \times \Omega \times \bR^d \times \cP_2(\bR^d) \to \bR^{d\times d'}$ satisfy that $\exists L>0$ such that: 
\begin{enumerate}
\item $Y(0) \in L^p(\cF_0; \bR^d; \bP)$ be independent of the Brownian motion. 
\item Integrability: $b$ and $\sigma$ satisfy
$$
\bE\Big[\Big(\int_0^T |b(t, 0, \delta_0)| dt\Big)^p\Big], \bE\Big[ \int_0^T \Big| \sigma(t, 0, \delta_0) \Big|^2 dt\Big)^{\tfrac{p}{2}} \Big] <\infty, 
$$ 
\item $\sigma$ is Lipschitz: $\forall t \in[0,T]$, $\forall x, x'\in \bR^d$ and $\forall \mu, \mu' \in \mathcal{P}_2(\bR^d)$ we have
\begin{align*}
|\sigma(t, x, \mu) - \sigma(t, x', \mu')| \leq L\Big( |x-x'| +W^{(2)}(\mu, \mu')\Big),
\end{align*}
\item $b$ satisfies the monotone growth condition in $x$ and is Lipschitz in $\mu$: $\forall t \in[0,T]$, $\forall x,x'\in \bR^d$ and $\forall \mu, \mu' \in \cP_2(\bR^d)$ we have that
\begin{align*}
\langle x-x', b(t, x, \mu)-b(t, x', \mu)\rangle_{\bR^d} & \leq L|x-y|^2\\
\textrm{and}\quad
|b(t, x, \mu) - b(t, x, \mu')| & \leq L W^{(2)} (\mu, \mu'),
\end{align*}
\item $b$ is Locally Lipschitz with Polynomial Growth in $x$: $\exists q\in \bN$ such that $q>1$ and $\forall t \in[0,T]$, $\forall \mu \in \cP_2(\bR^d)$, $\forall x, x'\in\bR^d$ we have
$$
|b(t, x, \mu) - b(t, x', \mu) | \leq L (1+|x|^{q-1}+|x'|^{q-1}) |x-x'|.
$$
\end{enumerate}
\end{assumption}

\begin{theorem}
\label{theo:LocLipMcK-V.ExistUniq}
Let $p\geq 2$. Recall the dynamics of $Y$ given by \eqref{eq:MKSDE-MainExistTheo}, where the drift and diffusion coefficients $b,\sigma$ and initial RV $Y(0)$ satisfy Assumption \ref{ass:MKSDE-MainExistTheo} with $p\geq 2$. 
Then there exists a unique solution $Y\in \cS^{p}\cap \cS^{2}$ to \eqref{eq:MKSDE-MainExistTheo} and $\exists C>0$ such that
\begin{align*}
\bE\Big[\sup_{t\in[0,T]}|Y(t)|^{p}\Big] \leq C \Big(\bE[|Y(0)|^{p}] & + \bE\Big[ \Big( \int_0^T |b(t, 0, \delta_0)| dt\Big)^p\Big] \\
&+ \bE\Big[ \Big( \int_0^T |\sigma(s, 0, \delta_0)|^2ds \Big)^{\tfrac{p}{2}} \Big] \Big)e^{CT}.
\end{align*}
\end{theorem}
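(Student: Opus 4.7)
The plan is to build the solution by a Picard/fixed-point iteration on measure flows, adapted to accommodate the monotone drift with polynomial growth. Let $\cM_{T_0}:=C([0,T_0];\cP_2(\bR^d))$ endowed with the complete metric $d_{T_0}(\mu,\nu):=\sup_{t\leq T_0} W^{(2)}(\mu_t,\nu_t)$. For every $\mu\in\cM_{T_0}$ I would consider the (now classical, law-free) frozen SDE
\begin{align*}
dY^\mu(t) = b(t,Y^\mu(t),\mu_t)\,dt + \sigma(t,Y^\mu(t),\mu_t)\,dW(t),\quad Y^\mu(0)=Y(0),
\end{align*}
and define $\Phi(\mu)_t:=\cL^{Y^\mu}_t$. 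A fixed point of $\Phi$ is precisely a solution of \eqref{eq:MKSDE-MainExistTheo}.

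First I would dispatch the frozen SDE. The coefficient $x\mapsto b(t,x,\mu_t)$ is locally Lipschitz with polynomial growth (Assumption \ref{ass:MKSDE-MainExistTheo}(5)) and satisfies the monotone growth condition (4), while $x\mapsto\sigma(t,x,\mu_t)$ is globally Lipschitz (3). Pathwise integrability of $b(\cdot,0,\mu_\cdot)$ and $\sigma(\cdot,0,\mu_\cdot)$ follows from (2) together with $|b(t,0,\mu_t)|\leq|b(t,0,\delta_0)|+L\,W^{(2)}(\mu_t,\delta_0)$, the analogous bound for $\sigma$, and Lemma \ref{lemma:WassersteinAgainstDirac}. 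Under these hypotheses, existence and uniqueness in $\cS^p$ is standard: truncate the polynomial nonlinearity to produce Lipschitz approximations $b_n$, solve the resulting Lipschitz SDEs by the preceding Lipschitz theorem, and pass to the limit using uniform $\cS^p$-bounds obtained from It\^o's formula applied to $|Y^\mu_n(t)|^p$, the monotonicity condition, Burkholder--Davis--Gundy and Gronwall. The very same It\^o computation yields the stated a priori moment bound.

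Next I would prove that $\Phi$ is a contraction on $\cM_{T_0}$ for some $T_0>0$ depending only on $L$. For $\mu,\nu\in\cM_{T_0}$, It\^o on $|Y^\mu(t)-Y^\nu(t)|^2$, together with monotonicity of $b$ in $x$ with the measure frozen, the Lipschitz-in-$\mu$ bound for $b$ and the full Lipschitz bound for $\sigma$, gives
\begin{align*}
\bE|Y^\mu(t)-Y^\nu(t)|^2 \leq C\int_0^t \bE|Y^\mu(s)-Y^\nu(s)|^2\, ds + C\int_0^t W^{(2)}(\mu_s,\nu_s)^2\, ds.
\end{align*}
Gronwall combined with the trivial coupling estimate $W^{(2)}(\cL^{Y^\mu}_t,\cL^{Y^\nu}_t)^2\leq\bE|Y^\mu(t)-Y^\nu(t)|^2$ yields $d_{T_0}(\Phi\mu,\Phi\nu)^2\leq CT_0 e^{CT_0}\, d_{T_0}(\mu,\nu)^2$. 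Choosing $T_0$ small enough makes $\Phi$ a strict contraction; Banach's fixed-point theorem delivers a unique solution on $[0,T_0]$, and iterating on sub-intervals $[kT_0,(k+1)T_0]$ (the threshold $T_0$ is independent of the initial datum) produces a unique global solution on $[0,T]$. Uniqueness on the whole interval alternatively follows directly by running the same $|Y-Y'|^2$-estimate on any two putative solutions and invoking $W^{(2)}(\cL^Y_s,\cL^{Y'}_s)^2\leq\bE|Y(s)-Y'(s)|^2$ to close the Gronwall loop in one pass.

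The hard part will be controlling the super-linear growth of $b$ in $x$ without the coercivity assumption used in \cite{HerrmannImkellerPeithmann2008}: condition (5) on its own is too weak for $\cS^p$ estimates and must be paired with the monotonicity condition (4) in the It\^o computation that bounds the moments of the truncated approximations uniformly in $n$. Equally delicate is ensuring that the truncations $b_n$ preserve both the monotonicity constant and the Lipschitz-in-$\mu$ property, so that the same contraction argument applies uniformly along the approximating sequence and in the limit, and so that the pass-to-the-limit in the fixed point $\Phi$ inherits the $\cP_2$-continuity required to stay inside $\cM_{T_0}$.
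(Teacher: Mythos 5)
Your proposal is correct and follows essentially the same route as the paper: freeze the measure flow, solve the resulting monotone/locally Lipschitz SDE (the paper simply cites \cite[Theorem 3.6]{mao2008stochastic} where you sketch a truncation argument), then run a Wasserstein fixed-point argument and the It\^o estimate on $|Y(t)|^p$ (with Lemma \ref{lemma:WassersteinAgainstDirac} to handle the measure argument) for the stated moment bound. The only cosmetic difference is that you get a contraction on a short interval $[0,T_0]$ and stitch, whereas the paper iterates the map $\Xi$ on all of $[0,T]$ and uses the factorial bound $K^jT^j/j!$ to make some power $\Xi^j$ a strict contraction -- both are standard and equivalent.
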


\begin{proof}
Consider the operator
$$
\Xi: \cP_2(C([0,T], \bR^d)) \to \cP_2(C([0,T], \bR^d)),
$$
where $\Xi(\mu) = \cL^{Y^\mu}$ denotes the law of the SDE's solution $Y^\mu$ with dynamics
$$
dY^\mu(t) = b(t, Y^\mu(t), \mu_t)dt + \sigma(t, Y^\mu(t), \mu_t)dW(t), \quad Y^\mu(0) = Y(0). 
$$
We start by showing that given some $\mu$, a solution to the above SDE exists. Let $\mu \in \cP_2(C([0,T], \bR^d))$. Define 
$$
\hat{b}^\mu(t, x) = b(t, x, \mu_t), \quad \hat{\sigma}^\mu(t, x) = \sigma(t, x, \mu_t).
$$
Then we have
\begin{align*}
& \bE\Big[ \Big( \int_0^T |\hat{b}^\mu(t, 0)| dt \Big)^{p} \Big]
\\
& \qquad 
 \leq \bE\Big[ \Big( \int_0^T |b(t, 0, \delta_0)| + L \cdot W^{(2)}(\mu_t, \delta_0)  dt \Big)^{p} \Big]
\\
& \qquad \leq 2^{p-1} \bE\Big[ \Big( \int_0^T |b(t, 0, \delta_0)| dt\Big)^p\Big]  + 2^{p-1}L^pT^p \cdot W^{(2)} ( \mu, \delta_0)^p < \infty,
\end{align*}
and similarly 
\begin{align*}
& \bE\Big[ \Big( \int_0^T |\hat{\sigma}^\mu(t, 0)|^2 dt \Big)^{\tfrac{p}{2}} \Big]
\\
&\qquad  \leq 2^{p-1} \bE\Big[ \Big( \int_0^T |\sigma(t, 0, \delta_0)|^2 dt\Big)^{\tfrac{p}{2}} \Big]  + 2^{p-1}L^pT^p \cdot W^{(2)} ( \mu, \delta_0)^p < \infty. 
\end{align*}
Also we have that $\hat{b}^\mu(t, x)$ is locally Lipschitz, satisfies a monotone growth condition and $\hat{\sigma}^\mu(t, x)$ has Lipschitz growth in its spacial variables. Therefore, by the methods in \cite[Theorem 3.6]{mao2008stochastic}, we have that a unique solution exists in $\cS^p([0,T])$. Since $p\geq2$, we can conclude that $\cL^{Y^\mu}\in \cP_2(C([0,T], \bR^d))$. 

Using It\^o's formula, we have that
\begin{align}
\nonumber
 W^{(2)} &( \Xi(\mu) , \Xi(\nu))^2 \leq \bE\Big[ \sup_{t\in[0,T]} |Y^{\mu}(t) - Y^{\nu}(t)|^2 \Big]
\\
\label{eq:ExistenceContractionOp1}
\leq& 2 \bE\Big[ \int_0^T | \langle Y^{\mu}(s) - Y^{\nu}(s), b(s, Y^{\mu}(s), \mu_s) - b(s, Y^{\nu}(s), \nu_s) \rangle | ds \Big]
\\
\label{eq:ExistenceContractionOp2}
&+ 2 \bE\Big[ \sup_{t\in[0,T]} \int_0^t \big\langle Y^{\mu}(s) - Y^{\nu}(s), [\sigma(s, Y^{\mu}(s), \mu_s) - \sigma(s, Y^{\nu}(s), \nu_s)]dW(s) \big\rangle \Big]
\\
\label{eq:ExistenceContractionOp3}
&+ \bE\Big[ \int_0^T \Big| \sigma(s, Y^{\mu}(s), \mu_s) - \sigma(s, Y^{\nu}(s), \nu_s)\Big|^2 ds \Big].
\end{align}

Firstly, we apply the monotonicity and Lipschitz properties to get
\begin{align*}
\eqref{eq:ExistenceContractionOp1}\leq& 2L \bE\Big[ \int_0^T |Y^\mu(s) - Y^\nu(s)|^2 ds\Big] + 2L \bE\Big[ \int_0^T |Y^{\mu}(s) - Y^{\nu}(s)| W^{(2)}(\mu_s, \nu_s) ds \Big]
\\
\leq& 2L\int_0^T \bE\Big[ \|Y^\mu - Y^\nu\|_{\infty, s} \Big] ds + \frac{\bE\Big[ \|Y^\mu - Y^\nu\|_\infty^2 \Big]}{3} + 3L^2 \int_0^T W^{(2)}(\mu_s, \nu_s)^2 ds. 
\end{align*}

Secondly, we use the Burkholder Davis Gundy inequality and Lipschitz properties
\begin{align*}
\eqref{eq:ExistenceContractionOp2}\leq& 2 \bE\Big[ \Big( \int_0^T \Big| (Y^\mu(s) - Y^\nu(s))^T \Big( \sigma(s, Y^{\mu}(s), \mu_s) - \sigma(s, Y^{\nu}(s), \nu_s)\Big) \Big|^2 ds\Big)^{\tfrac{1}{2}} \Big]
\\
\leq& 2\bE\Big[ \|Y^\mu - Y^\nu\|_\infty \Big( \int_0^T |\sigma(s, Y^{\mu}(s), \mu_s) - \sigma(s, Y^{\nu}(s), \nu_s)|^2 ds\Big)^{\tfrac{1}{2}} \Big]
\\
\leq& \frac{\bE\Big[ \|Y^\mu - Y^\nu\|_\infty^2 \Big]}{3} + 6L^2 \int_0^T \bE\Big[ \| Y^{\mu} - Y^{\nu}\|_{\infty, s}^2\Big] ds + 6L^2 \int_0^T W^{(2)} (\mu_s, \nu_s)^2 ds. 
\end{align*}

Thirdly, using the Lipschitz properties again we get
\begin{align*}
\eqref{eq:ExistenceContractionOp2} \leq& 2L^2 \int_0^T \bE\Big[ \|Y^\mu - Y^\nu\|_{\infty, s}^2\Big] ds + 2L^2\int_0^T W^{(2)}(\mu_s, \nu_s)^2 ds. 
\end{align*}
Combining \eqref{eq:ExistenceContractionOp1}, \eqref{eq:ExistenceContractionOp2} and \eqref{eq:ExistenceContractionOp3} gives that
$$
\frac{\bE\Big[\|Y^{\mu} - Y^{\nu}\|_{\infty}^2 \Big]}{3} 
\leq (8L^2 + 2L) \int_0^T \bE\Big[\|Y^{\mu} - Y^{\nu}\|_{\infty, s}^2 \Big] ds + 11L^2 \int_0^T W^{(2)}(\mu_s, \nu_s)^2 ds. 
$$
Applying Gr\"onwall to this yields a control to the initial inequality
\begin{align*}
W^{(2)} ( \Xi(\mu), \Xi(\nu))^2 \leq& \bE\Big[\|Y^{\mu} - Y^{\nu}\|_{\infty}^2 \Big] 
\leq K \int_0^T W_s^{(2)} (\mu, \nu)^2 ds,
\end{align*}
where $K = 11L^2 e^{(24L^2+6L)T} $. Applying $\Xi$ inductively $j$ times yields
\begin{align*}
W^{(2)} \Big( \Xi^j(\mu), \Xi^j(\nu)\Big)^2 \leq& K^j \int_0^T \int_0^{t_1} ... \int_0^{t_{j-1}} W_{t_j}^{(2)} (\mu, \nu)^2 dt_1 ... dt_{j}
\\
\leq&K^j \int_0^T \frac{(T-t_j)^{j-1} }{(j-1)!}W_{t_j}^{(2)} (\mu, \nu)^2 dt_{j} \leq \frac{K^j T^j}{j!} W^{(2)} (\mu, \nu)^2.
\end{align*}
Choosing $j$ large enough ensures that $\Xi^j$ is a contraction operator. Therefore, $\Xi$ has a unique fixed point. 
Hence we conclude that the Picard sequence of random processes $Y^0 (t) = Y(0)$ and
$$
dY^n(t) = b\big(t, Y^n(t), \cL_t^{Y^{n-1}}\big) dt + \sigma\big(t, Y^n(t), \cL_t^{Y^{n-1}}\big) dW(t),
$$
converges in $\cS^2$ and the limit solves the MV-SDE \eqref{eq:MKSDE-MainExistTheo}. From this we conclude that a unique solution exists. 

\emph{Step 2: Moment calculations}. Recall the dynamics of $Y$ from \eqref{eq:MKSDE-MainExistTheo}. By 
 It\^o's formula we have 
\begin{align*}
|Y(t)|^p =& |Y(0)|^p
+ p\int_0^t |Y(s)|^{p-2} \langle Y(s), b(s, Y(s), \cL_s^{Y})\rangle ds 
\\
&+ p\int_0^t |Y(s)|^{p-2} \langle Y(s), \sigma(s, Y(s), \cL_s^{Y}) dW(s)\rangle 
\\
&
+ \frac{p}{2} \int_0^t |Y(s)|^{p-2} \Big| \sigma(s, Y(s), \cL_s^{Y})\Big|^2 ds
\\
&+ \frac{p(p-2)}{2} \int_0^t |Y(s)|^{p-4} \cdot \Big| Y(s)^T \sigma(s, Y(s), \cL_s^Y) \Big|^2 ds.
\end{align*}
Therefore
\begin{align}
\nonumber
\bE\Big[ \|Y\|_{\infty}^p\Big] =& \bE\Big[ |Y(0)|^p \Big]
\\
\label{eq:NewExistenceMoments1}
&+ p\bE\Big[ \int_0^T |Y(s)|^{p-2} \big|\langle Y(s), b(s, Y(s), \cL_s^{Y})\rangle\big| ds \Big]
\\
\label{eq:NewExistenceMoments2}
&+ p\bE\Big[ \sup_{0\leq t\leq T}\int_0^t |Y(s)|^{p-2} \langle Y(s), \sigma(s, Y(s), \cL_s^{Y})dW(s)\rangle \Big]
\\
\label{eq:NewExistenceMoments3}
&+ \frac{p}{2} \bE\Big[ \int_0^T |Y(s)|^{p-2} \Big| \sigma(s, Y(s), \cL_s^{Y})\Big|^2 ds \Big] 
\\
\label{eq:NewExistenceMoments4}
&+ \frac{p(p-2)}{2} \bE\Big[ \int_0^T |Y(s)|^{p-4} \cdot \Big| Y(s)^T \sigma(s, Y(s), \cL_s^Y) \Big|^2 ds \Big].
\end{align}
By the triangle property we have
\begin{align}
\label{eq:NewExistenceMoments1.1}
\eqref{eq:NewExistenceMoments1} \leq& p \bE\Big[ \int_0^T |Y(s)|^{p-2} \langle Y(s), b(s, Y(s), \cL_s^Y) - b(s, 0, \cL_s^Y) \rangle ds \Big] 
\\
\label{eq:NewExistenceMoments1.2}
&+p \bE\Big[ \int_0^T |Y(s)|^{p-2} \langle Y(s), b(s, 0, \cL_s^Y) - b(s, 0, \delta_0) \rangle ds \Big] 
\\
\label{eq:NewExistenceMoments1.3}
&+p \bE\Big[ \int_0^T |Y(s)|^{p-2} \langle Y(s), b(s, 0, \delta_0) \rangle ds \Big] . 
\end{align}
Using the monotone property of $b$ yields
$
\eqref{eq:NewExistenceMoments1.1} \leq pL \int_0^T \bE\Big[ \|Y\|_{\infty, s}^p\Big] ds. 
$ 
Using the Lipschitz property of $b$ in the distribution variable and Lemma \ref{lemma:WassersteinAgainstDirac} yields
\begin{align*}
\eqref{eq:NewExistenceMoments1.2} \leq& pL  \int_0^T \bE\Big[ \|Y\|_{\infty, s}^{p-1} \Big] \bE\Big[ \|Y\|_{\infty, s}^2\Big]^{\tfrac{1}{2}} ds
\\
\leq&pL \int_0^T \Big( \frac{(p-1)\bE\Big[ \|Y\|_{\infty, s}^{p-1}\Big]^{\tfrac{p}{p-1}} }{p} + \frac{\bE\Big[ \|Y\|_{\infty, s}^2\Big]^{\tfrac{p}{2}}}{p} \Big) ds
\\ \leq &  
 pL \int_0^T \bE\Big[ \|Y\|_{\infty, s}^p\Big] ds. 
\end{align*}
Using the integrability properties of $b$ yields
\begin{align*}
\eqref{eq:NewExistenceMoments1.3} 
\leq
& \bE\Big[ \|Y\|_{\infty}^{p-1} \int_0^T |b(s, 0, \delta_0)| ds\Big] 
\\
&
\leq \frac{\bE\Big[ \|Y\|_{\infty}^p \Big] }{n} + n^{p-1} (p-1)^{p-1} \bE\Big[ \Big( \int_0^T |b(s, 0, \delta)| ds \Big)^p \Big] 
\end{align*}
where $n\in \bN$ which will be chosen later. 

By the Burkholder-Davis-Gundy inequality, the Lipschitz properties and Lemma \ref{lemma:WassersteinAgainstDirac} we have
\begin{align}
\nonumber
\eqref{eq:NewExistenceMoments2}\leq& pC_1 \bE\Big[ \Big( \int_0^T |Y(s)|^{2p-4} \Big|Y(s)^T\sigma(s, Y(s), \cL_s^{Y})\Big|^2 ds \Big)^{\tfrac{1}{2}} \Big]
\\
\nonumber
\leq& pC_1 \bE\Big[ \|Y\|_{\infty, s}^{\tfrac{p}{2}} \Big( \int_0^T |Y(s)|^{p-2} \cdot \Big| \sigma(s, Y(s), \cL_s^{Y})\Big|^2 ds\Big)^{\tfrac{1}{2}} \Big]
\\
\nonumber
\leq& \frac{\bE\Big[ \|Y\|_{\infty, s}^p \Big]}{n} + p^2 C_1^2 n \bE\Big[ \int_0^T |Y(s)|^{p-2} \cdot \Big| \sigma(s, Y(s), \cL_s^Y) \Big|^2 ds \Big]
\\
\label{eq:NewExistenceMoments2.1}
\leq& \frac{\bE\Big[ \|Y\|_{\infty, s}^p \Big]}{n} + 3p^2 C_1^2 n L^2\Big( \int_0^T  \bE\Big[ \|Y\|_{\infty, s}^{p} \Big] ds + \int_0^T \bE\Big[\|Y\|_{\infty, s}^{p-2}\Big] \cdot \bE\Big[ \|Y\|_{\infty, s}^2\Big] ds \Big)
\\
\label{eq:NewExistenceMoments2.2}
& + 3p^2 C_1^2 n \bE\Big[ \int_0^T |Y(s)|^{p-2} \Big| \sigma(s, 0, \delta_0) \Big|^2 ds \Big]. 
\end{align}
Terms \eqref{eq:NewExistenceMoments2.1} are dealt with in the same way as terms \eqref{eq:NewExistenceMoments1.1} and \eqref{eq:NewExistenceMoments1.2}. For \eqref{eq:NewExistenceMoments2.2} we proceed as follows
\begin{align*}
\eqref{eq:NewExistenceMoments2.2} \leq& \bE\Big[ \|Y\|_{\infty}^{p-2} \Big( 3p^2 C_1^2 n \int_0^T \Big| \sigma(s, 0, \delta_0)\Big|^2 ds\Big) \Big] 
\\
\leq& \frac{\bE\Big[ \|Y\|_{\infty}^{p}\Big]}{n} + 2\cdot3^{\tfrac{p}{2}}\cdot n^{p-1} C_1^p (p-2)^{\tfrac{p-2}{2}} p^{\tfrac{p}{2}} \bE\Big[ \Big( \int_0^T \Big| \sigma(s, 0, \delta_0)\Big|^2 ds\Big)^{\tfrac{p}{2}} \Big] . 
\end{align*}
Thirdly, we have
\begin{align}
\nonumber
\eqref{eq:NewExistenceMoments3} + \eqref{eq:NewExistenceMoments4} \leq& \frac{p(p-1)}{2} \bE\Big[ \int_0^T |Y(s)|^{p-2} \Big| \sigma(s, Y(s), \cL_s^{Y})\Big|^2 ds \Big] 
\\
\nonumber
\leq& \frac{3p(p-1)L^2}{2} \Big( \int_0^T \bE\Big[ \|Y\|_{\infty, s}^p \Big] ds + \int_0^T \bE\Big[ \|Y\|_{\infty, s}^{p-2} \Big] \cdot \bE\Big[ \|Y\|_{\infty, s}^{2} \Big] ds \Big)
\\
\label{eq:NewExistenceMoments3.1}
&+\frac{3p(p-1)}{2} \bE\Big[ \|Y\|_{\infty}^{p-2} \int_0^T \Big| \sigma(s, 0, \delta_0)\Big|^2 ds \Big]
\end{align}
and
\begin{align*}
\eqref{eq:NewExistenceMoments3.1} \leq \frac{ \bE\Big[ \|Y\|_\infty^p\Big] }{n} + \Big( \frac{n(p-2)}{2}\Big)^{\tfrac{p-2}{2}} \cdot \Big( 3(p-1)\Big)^{\tfrac{p}{2}} \bE\Big[ \Big( \int_0^T \Big| \sigma(s, 0, \delta_0)\Big|^2 ds\Big)^{\tfrac{p}{2}} \Big]. 
\end{align*}

Hence we choose $n=5$ and this can all be rearranged to get
\begin{align*}
\frac{\bE\Big[ \|Y\|_{\infty}^p\Big]}{5} \leq& \bE\Big[ |Y(0|^p \Big] + \tilde{C_1} \bE\Big[ \Big( \int_0^T \Big| \sigma(s, 0, \delta_0)\Big|^2 ds\Big)^{\tfrac{p}{2}} \Big] 
\\
&+ \tilde{C_2} \bE\Big[ \Big( \int_0^T |b(s, 0, \delta_0)| ds\Big)^{p} \Big] 
+ \tilde{C_3} \int_0^T \bE\Big[ \|Y\|_{\infty, s}^p\Big] ds,
\end{align*}
where the constants $\tilde{C_1}$, $\tilde{C_2}$ and $\tilde{C_3}$ are dependent only on $p$ and $L$. Applying Gr\"onwall's lemma provides us with the $p$ moment upper bound 
\begin{align*}
\bE\Big[ \|Y\|_{\infty}^p\Big] 
\leq 5\Bigg( \bE\Big[ |Y(0|^p \Big] 
& + \tilde{C_1} \bE\Big[ \Big( \int_0^T \Big| \sigma(s, 0, \delta_0)\Big|^2 ds\Big)^{\tfrac{p}{2}} \Big] 
\\
&
+ \tilde{C_2} \bE\Big[ \Big( \int_0^T |b(s, 0, \delta_0)| ds\Big)^{p} \Big] \Bigg) e^{\tilde{C_3} T}. 
\end{align*}

\end{proof}

\subsection{Continuity in time behavior}

We next give results describing time-continuity for the process and its law in the appropriate topologies.
\begin{proposition}
\label{proposition:ContinuityOfLaw}
Let $Y$ be the solution of \eqref{eq:MKSDE-MainExistTheo} satisfying Assumption \ref{ass:MKSDE-MainExistTheo} where $q\in \bN$ is the order of the polynomial growth of $b$. Let $n\in \bN$ and $n\geq 2$ and additionally assume that
$$
\bE\Big[ \sup_{t\in[0,T]} |b(t, 0, \delta_0)|^{nq} \Big],\ \bE\Big[ \sup_{t\in[0,T]} \Big| \sigma(t, 0, \delta_0) \Big|^{\tfrac{nq}{2}} \Big] < \infty.
$$
Then for every $t,s\in[0,T]$  
$$
W^{(n)}\Big( \cL_t^Y, \cL_s^Y \Big)  \leq \bE\Big[ \Big|Y(t) - Y(s)\Big|^n\Big]^{\tfrac{1}{n}} \lesssim |t-s|^{\frac{1}{2}}.
$$
\end{proposition}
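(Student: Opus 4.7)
The first bound $W^{(n)}(\cL_t^Y, \cL_s^Y) \leq \bE[|Y(t)-Y(s)|^n]^{1/n}$ is immediate: the random vector $(Y(t), Y(s))$ is a particular coupling of the marginals $\cL_t^Y$ and $\cL_s^Y$, so it suffices to evaluate the infimum appearing in the definition of $W^{(n)}$ at this coupling.

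For the main inequality, the plan is to use the integral form of the SDE $Y(t) - Y(s) = \int_s^t b(r, Y(r), \cL_r^Y)\,dr + \int_s^t \sigma(r, Y(r), \cL_r^Y)\,dW(r)$ and to apply Minkowski's inequality in $L^n(\Omega)$ to split the contributions of drift and diffusion. The drift term will be handled by H\"older in time, giving $|\int_s^t b(r,\cdot)\,dr|^n \leq |t-s|^{n-1}\int_s^t |b(r,\cdot)|^n\,dr$; the diffusion term by Burkholder--Davis--Gundy followed by H\"older in time, yielding $\bE[(\int_s^t |\sigma(r,\cdot)|^2 dr)^{n/2}] \leq |t-s|^{n/2-1}\int_s^t \bE[|\sigma(r,\cdot)|^n]\,dr$.

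The substantive task is then to control $\sup_{r\in[0,T]}\bE[|b(r,Y(r),\cL_r^Y)|^n]$ and $\sup_{r\in[0,T]}\bE[|\sigma(r,Y(r),\cL_r^Y)|^n]$. Using the polynomial growth and Lipschitz-in-measure properties of $b$ (Assumption \ref{ass:MKSDE-MainExistTheo} (4)--(5)) together with Lemma \ref{lemma:WassersteinAgainstDirac} to identify $W^{(2)}(\cL_r^Y,\delta_0) = \bE[|Y(r)|^2]^{1/2}$, one obtains a pointwise estimate of the form $|b(r,Y(r),\cL_r^Y)|^n \lesssim |b(r,0,\delta_0)|^n + |Y(r)|^{nq} + \bE[|Y(r)|^2]^{n/2}$, with an analogous (but milder) bound for $\sigma$ involving only moments of $Y$ of order $n$. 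The bottleneck is therefore establishing $\bE[\sup_{r\in[0,T]} |Y(r)|^{nq}] < \infty$, which is obtained by reapplying Theorem \ref{theo:LocLipMcK-V.ExistUniq} at the higher integrability level $p = nq$: the strengthened hypothesis $\bE[\sup_t |b(t,0,\delta_0)|^{nq}], \bE[\sup_t |\sigma(t,0,\delta_0)|^{nq/2}] < \infty$ is exactly what is needed to verify the $L^{nq}$-version of Assumption \ref{ass:MKSDE-MainExistTheo}(2) via crude bounds such as $(\int_0^T |b(r,0,\delta_0)|\,dr)^{nq} \leq T^{nq}\sup_r|b(r,0,\delta_0)|^{nq}$, provided the initial condition lies in $L^{nq}$, which is implicit in the setup.

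Combining these estimates, the drift contributes a term of order $|t-s|^{n}$ and the diffusion one of order $|t-s|^{n/2}$ inside the $n$-th moment; taking $n$-th roots yields bounds of order $|t-s|$ and $|t-s|^{1/2}$ respectively, and since $t,s\in[0,T]$ with $T$ finite the slower exponent dominates, delivering the claimed $|t-s|^{1/2}$ rate. The main obstacle is the moment-bookkeeping needed to guarantee $Y\in \cS^{nq}$ so that the polynomial-growth contribution in the drift bound remains integrable; the rest is a routine application of BDG and H\"older.
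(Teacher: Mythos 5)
Your argument is correct and follows essentially the same route as the paper's proof: coupling for the first inequality, Minkowski to split drift and diffusion, BDG for the martingale part, polynomial growth and Lipschitz-in-measure for the pointwise bounds, and reapplying Theorem \ref{theo:LocLipMcK-V.ExistUniq} at level $p=nq$ to control $\bE[\sup_t|Y(t)|^{nq}]$. The only difference is minor bookkeeping (you use H\"older in time, the paper pulls out a $|t-s|$ factor directly via sup norms), and you correctly flag the tacit requirement $Y(0)\in L^{nq}$.
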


\begin{proof} 
The proposition's conditions mean that by the arguments in the proof of Theorem \ref{theo:LocLipMcK-V.ExistUniq} we have
$
\bE[ \sup_{t\in[0,T]} |Y(t)|^{nq}] < \infty.
$ 
Take $0\leq s \leq t \leq T<\infty$ and a natural number $n\geq 2$. We have
$$
|Y(t)-Y(s)|^n \leq \Big| \int_s^t b(r, Y(r), \cL_r^Y) dr + \int_s^t \sigma(r, Y(r), \cL_r^Y) dW(r) \Big|^n. 
$$
We use the growth condition of $b$ and the Lipschitz property of $\sigma$ and apply the Minkowski Inequality to get
\begin{align*}
\bE\Big[ |Y(t) & - Y(s)|^n\Big]^{\tfrac{1}{n}} \\
\leq& \bE\Big[ \Big|\int_s^t b(r, Y(r), \cL_r^Y)dr \Big|^n \Big]^{\tfrac{1}{n}} + \bE\Big[ \Big|\int_s^t \sigma(r,Y(r), \cL_r^Y) dW(r) \Big|^n \Big]^{\tfrac{1}{n}}
\\
\leq& \bE\Big[ \Big| \int_s^t |b(r, Y(r), \cL_r^Y) dr \Big|^n \Big]^{\tfrac{1}{n}} 
+\bE\Big[ \Big| \int_s^t |\sigma(r, Y(r), \cL_r^Y)|^2 dr \Big|^{\tfrac{n}{2}} \Big]^{\tfrac{1}{n}}
\\
\leq& |t-s| \bE\Big[ \Big( \|b(\cdot, 0, \delta_0)\|_\infty + L\|Y\|_\infty^q + \bE\Big[\|Y\|^2\Big]^{\tfrac{1}{2}} \Big)^n\Big]^{\tfrac{1}{n}}  
\\
&+ |t-s|^{\tfrac{1}{2}} \bE\Big[ \Big( \|\sigma(\cdot, 0, \delta_0)\|_\infty + L\|Y\|_\infty + \bE\Big[ \|Y\|_\infty^2\Big]^{\tfrac{1}{2}}\Big)^n \Big]^{\tfrac{1}{n}} 
\\
 \lesssim& |t-s|^{\frac{1}{2}},
\end{align*}

From the 1st part of the proposition, we have $\bE\big[\, |Y(t) - Y(s)|^{2p}\big]\lesssim |t-s|^p$. The results now follow by applying Kolmogorov's Continuity criterion in a standard fashion.
\end{proof}

\begin{corollary}
Let $Y$ be the solution of \eqref{eq:MKSDE-MainExistTheo} under Assumption \ref{ass:MKSDE-MainExistTheo} and suppose additionally that $\forall n \in \bN$ we have 
$$
\bE\Big[ \| b(\cdot, 0, \delta_0)\|_\infty^n \Big] < \infty, 
\quad
\bE\Big[ \| \sigma(\cdot, 0, \delta_0)\|_\infty^n \Big] < \infty. 
$$
Then there is a modification of $Y(\cdot)$, $\tilde{Y}(\cdot)$,  which is sample-continuous, almost surely equal to $Y(\cdot)$ and $\alpha$-H\"older continuous for $\alpha < {1}/{2}$. 
\end{corollary}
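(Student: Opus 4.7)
The plan is to directly apply the Kolmogorov continuity criterion to the moment estimate obtained in Proposition \ref{proposition:ContinuityOfLaw}, exploiting the fact that the strengthened integrability hypotheses hold for every $n\in\bN$.

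First, I would verify that the hypotheses of Proposition \ref{proposition:ContinuityOfLaw} are satisfied for every $n\in\bN$. Since $\bE[\|b(\cdot,0,\delta_0)\|_\infty^n]<\infty$ and $\bE[\|\sigma(\cdot,0,\delta_0)\|_\infty^n]<\infty$ for all $n$, it follows in particular that
$$
\bE\Big[\sup_{t\in[0,T]} |b(t,0,\delta_0)|^{nq}\Big] <\infty \quad\text{and}\quad \bE\Big[\sup_{t\in[0,T]}|\sigma(t,0,\delta_0)|^{nq/2}\Big] <\infty
$$
for every $n\in\bN$, where $q$ is the polynomial growth order of $b$ from Assumption \ref{ass:MKSDE-MainExistTheo}. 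Therefore, Proposition \ref{proposition:ContinuityOfLaw} yields, for every $n\geq 2$ and every $s,t\in[0,T]$,
$$
\bE\Big[|Y(t)-Y(s)|^n\Big] \leq C_n |t-s|^{n/2},
$$
for a constant $C_n>0$ independent of $s,t$.

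Next, I would invoke Kolmogorov's continuity criterion: if a process $Z$ satisfies $\bE[|Z(t)-Z(s)|^n] \leq C|t-s|^{1+\beta}$ for some $n>0$ and $\beta>0$, then $Z$ admits a continuous modification that is $\gamma$-Hölder continuous for every $\gamma<\beta/n$. In our setting the inequality above gives $1+\beta = n/2$, so $\beta/n = 1/2 - 1/n$, and hence we obtain a continuous modification $\tilde Y^{(n)}$ which is $\gamma$-Hölder continuous for every $\gamma < 1/2 - 1/n$.

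Finally, to get a single modification working for all $\alpha<1/2$, I would note that any two continuous modifications of $Y$ agree almost surely on $[0,T]$ by the standard uniqueness argument for modifications (they agree on the countable dense set of rationals and then everywhere by continuity). So the modifications $\tilde Y^{(n)}$ for varying $n$ coincide almost surely, and we may take $\tilde Y:=\tilde Y^{(2)}$ (for instance). Then for any $\alpha<1/2$ we can pick $n$ large enough so that $\alpha < 1/2 - 1/n$ and conclude that $\tilde Y$ is $\alpha$-Hölder continuous almost surely. There is no real obstacle here; the only substantive input is Proposition \ref{proposition:ContinuityOfLaw}, and the rest is a routine application of Kolmogorov's criterion.
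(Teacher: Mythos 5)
Your proposal is correct and follows essentially the same route as the paper: the stronger integrability assumptions yield $\bE[\,|Y(t)-Y(s)|^n]\lesssim |t-s|^{n/2}$ for every $n$ via Proposition \ref{proposition:ContinuityOfLaw}, and the Kolmogorov continuity criterion then gives the $\alpha$-H\"older modification for every $\alpha<1/2$. The only difference is that you spell out the (routine) step of identifying the modifications obtained for different $n$, which the paper leaves implicit.
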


\begin{proof}
Under these stronger conditions we have  $\forall n\in \bN$ that 
$
\bE[ \, |Y(t) - Y(s)|^n] \lesssim |t-s|^{{n}/{2}}
$. 
Therefore, we apply the Kolmogorov Continuity Criterion from \cite[Theorem 2.2.3]{oksendal2003stochastic} and conclude. 
\end{proof}
The final result concerns $C^1$-regularity (in time) of the expected value of maps of the MV-SDE's solution.
\begin{proposition}[Regularity in time]
Let $\phi\in C^{1,2}([0,T]\times \bR^d)$ and suppose that $\phi$, its first derivative, $\nabla_x \phi(\cdot, \cdot)$, and Hessian, $H[\phi](\cdot,\cdot)$, have polynomial growth such that for some $r>0$ and some $K>0$
$$
\max\Big\{
\big| \frac{\partial \phi}{\partial t}(t, y)\big|, 
\big|\nabla_x \phi(t, y) \big|,
\big|H[\phi](t, y)\big|
\Big\}
\leq K\big(1+ |y|^r\big).
$$
Suppose that $Y$ is the solution to \eqref{eq:MKSDE-MainExistTheo} under Assumptions \ref{ass:MKSDE-MainExistTheo} with $p:= \max\{r+q,2r+2\}$ ($q$ is the polynomial growth of $b$) and hence $Y\in \cS^{p}$. 

Then $t\mapsto \bE[\phi(t, Y(t))]\in C^1$ and 
\begin{align*}
\partial_t \bE[\phi(t, Y(t))]
=& \bE\Big[ \frac{\partial \phi}{\partial t}(t, Y(t)) \Big] + \bE\Big[ \nabla \phi(t, Y(t))^T \cdot b(t, Y(t), \cL_t^Y) \Big] 
\\
&+ \bE\Big[ \mbox{Tr}\Big(\sigma(t, Y(t), \cL_t^Y)^T\cdot H\Big[\phi \Big](t, Y(t)) \cdot \sigma(t, Y(t), \cL_t^Y) \Big) \Big].
\end{align*}
\end{proposition}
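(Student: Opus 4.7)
The plan is to apply Itô's formula to $\phi(s, Y(s))$ on $[0,t]$, take expectations to eliminate the local-martingale part, and then invoke the fundamental theorem of calculus. Concretely, Itô's formula gives
\begin{align*}
\phi(t, Y(t)) = \phi(0, Y(0)) &+ \int_0^t \Big\{ \tfrac{\partial \phi}{\partial s}(s, Y(s)) + \nabla\phi(s, Y(s))^T b(s, Y(s), \cL_s^Y) \\
&\qquad \qquad+ \tfrac{1}{2}\mbox{Tr}\big(\sigma^T H[\phi] \sigma\big)(s, Y(s), \cL_s^Y) \Big\} ds \\
&+ \int_0^t \nabla\phi(s, Y(s))^T \sigma(s, Y(s), \cL_s^Y) dW(s).
\end{align*}

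The second step, which carries most of the technical weight, is to verify that each term on the right-hand side is integrable so that Fubini and the martingale property apply. For the stochastic integral, combining the polynomial bound $|\nabla\phi(s, y)| \leq K(1+|y|^r)$ with the Lipschitz estimate $|\sigma(s, y, \mu)| \lesssim |\sigma(s,0,\delta_0)| + |y| + W^{(2)}(\mu, \delta_0)$ and Lemma \ref{lemma:WassersteinAgainstDirac} yields a squared integrand dominated by $(1+|Y(s)|^{2r+2})$ plus $|\sigma(s,0,\delta_0)|^2$ terms; this sits in $L^1([0,T]\times\Omega)$ precisely because $p\geq 2r+2$ forces $\bE[\|Y\|_\infty^{2r+2}]<\infty$ via Theorem \ref{theo:LocLipMcK-V.ExistUniq}, turning the stochastic integral into a true martingale with zero expectation. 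A parallel bookkeeping on $\bE\int_0^T |\nabla\phi^T b|\, ds$ uses the polynomial growth of $b$ of order $q$ and the bound $|\nabla\phi| \lesssim 1+|y|^r$ to produce an $|Y|^{r+q}$ envelope — this is what dictates the second branch $p\geq r+q$ in the choice $p=\max\{r+q,2r+2\}$. The Hessian term is subsumed by $p\geq 2r+2$.

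Taking expectations then produces
$$\bE[\phi(t, Y(t))] = \bE[\phi(0, Y(0))] + \int_0^t F(s)\, ds,$$
where $F(s)$ is the expectation of the bracketed integrand in the Itô expansion. Writing the identity in this form reduces $C^1$-regularity to continuity of $s\mapsto F(s)$, which is the final step and the one I expect to be most delicate. The main ingredient is the $L^p$-continuity of $s\mapsto Y(s)$ furnished by Proposition \ref{proposition:ContinuityOfLaw} (which also gives Wasserstein-continuity of $s\mapsto\cL_s^Y$), combined with the Lipschitz-in-$\mu$ and locally-Lipschitz-in-$y$ structure of $b$, the full Lipschitz property of $\sigma$, and the $C^{1,2}$ regularity of $\phi$; the moment bounds from Theorem \ref{theo:LocLipMcK-V.ExistUniq} provide the polynomial envelope needed for dominated convergence. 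Once continuity of $F$ is established, the fundamental theorem of calculus gives both $t\mapsto \bE[\phi(t,Y(t))]\in C^1$ and the claimed expression for its derivative.
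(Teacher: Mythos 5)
Your proposal is correct and follows essentially the same route as the paper's own proof: It\^o's formula plus expectations, square-integrability of the stochastic integral from $Y\in\cS^{2r+2}$ together with the polynomial growth of $\nabla_x\phi$ and the Lipschitz bound on $\sigma$, the $|Y|^{r+q}$ envelope for the drift term explaining the branch $p\geq r+q$, and Fubini with time-continuity of $Y$ and $\cL^Y$ to differentiate (the paper invokes the Leibniz rule where you invoke continuity of $F$ plus the fundamental theorem of calculus, which is the same argument). Note only that your It\^o expansion correctly carries the factor $\tfrac{1}{2}$ in front of $\mbox{Tr}\big(\sigma^T H[\phi]\sigma\big)$, which the displayed formula in the proposition omits; your derivation yields the correct constant.
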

\begin{proof}
Use It\^o's formula on $\phi(t,Y(t))$, integrate over $[0,t]$ and take expectations. By the integrability/growth assumptions on $b$ and $\sigma$, we have $Y\in\cS^{p}$ and in particular $Y\in\cS^{2r+2}$. Combining with the polynomial growth of $\nabla \phi(\cdot,\cdot)$ in its spatial variable we easily conclude that the stochastic integral $\int_0^\cdot \nabla \phi(s,Y(s)) \sigma(s, Y(s), \cL_s^Y) ) d W(s)$ is a square-integrable martingale and hence it vanishes under the expectation.

In the previous results we have shown continuity in time of $Y$ and $\cL^Y$ in the appropriate metrics. This, combined with the continuity of $b$ and $\sigma$ in their variables plus the integrability results, allows to apply Fubini and swap expectations and integrals. Lastly, using the continuity/integrability properties of the involved terms again (notice that here one requires $Y\in \cS^{r+q}$), one can compute the time derivative of $t\mapsto \bE[\phi(t, Y(t))]$ via the Leibniz differentiation rule for integrals. This yields the lemma's formula. 
\end{proof}


\section{Large Deviations Principle}
\label{sec:LPDresults}

In this section we investigate the family of $d$-dimensional MV-SDEs indexed to a parameter $\varepsilon>0$, 
\begin{align}
\label{sandra1}
X_{\varepsilon}^x(t) 
&= x
   + \int_0^t b_{\varepsilon}(s, X_{\varepsilon}^x(s), \cL_s^{X_{\varepsilon}^x})ds 
	 + {\sqrt{\varepsilon}} \int_0^t \sigma_{\varepsilon}(s, X_{\varepsilon}^x(s), \cL_s^{X_{\varepsilon}^x}) dW(s).
\end{align}
We derive two types of LDP for the above SDE. The first is an LDP for the supremum norm while the second is an LDP for the H\"older-norm. Throughout we make use of several known sources: \cite{DemboZeitouni2010}, \cite{HerrmannImkellerPeithmann2008} and \cite{arous1994grandes}. The main contribution of this section apart from the LDPs themselves, are the techniques needed to deal directly with the law of the solution process inside the coefficients avoiding measure arguments; time dependency of the coefficients is included. For technical convenience, we work on the time interval $[0,1]$. The extension to the interval $[0,T]$ is straightforward.

\begin{assumption}
\label{def:He-man+Skeletor}
Let $\varepsilon>0$. Let $b, b_{\varepsilon} :[0,1] \times  \bR^d \times\cP_2(\bR^d) \to \bR^d$, $\sigma, \sigma_{\varepsilon} :[0,1] \times  \bR^d \times \cP_2(\bR^d) \to \bR^{d\times d'}$ (deterministic maps) and $x\in\bR^d$.

As $\varepsilon\searrow 0$, let the maps $b_{\varepsilon}$ converge uniformly to $b$ and $\sigma_{\varepsilon}$ converge uniformly to $\sigma$. Let $b$ and $\sigma$ satisfy Assumption \ref{ass:MKSDE-MainExistTheo} with the additional restrictions that there exists $M>0$ such that $\sigma$ is bounded by $M$ and that there exists $\beta \in (0,1]$ such that for any $s,s'\in[0,1]$, for any $y\in\mathbb{R}^d$ and for all $\mu\in\mathcal{P}_2\left(\mathbb{R}^d\right)$, we have:
\begin{align*}
\left|\sigma\left(s,y,\mu\right)-\sigma(s',y,\mu)\right|\leq L|s-s'|^\beta, \quad 
\left|b\left(s,y,\mu\right)-b(s',y,\mu)\right|\leq L|s-s'|^\beta\,.
\end{align*}
\end{assumption}

\begin{remark}
For this section we only worry about the conditions on the coefficients $b$ and $\sigma$. However, we will additionally assume that $b_{\varepsilon}$ and $\sigma_{\varepsilon}$ have adequate conditions to ensure the existence and uniqueness of a solution to the McKean-Vlasov SDE. 
\end{remark}


\subsection{Large Deviations Principle with the supremum norm}

To study \eqref{sandra1} and establish an LDP in the supremum norm for \eqref{sandra1} we will need to consider several approximations for it. We start by considering the following ordinary differential equation: 
\begin{equation}
\label{ode}
\dot\psi(t)=b\left(t,\psi(t),\delta_{\psi(t)}\right),\quad \psi(0)=x.
\end{equation}
Indeed, informally, when ${\varepsilon} \searrow 0$ in \eqref{sandra1}, the diffusion term vanishes and we have
\begin{equation*}
X_{0}^x(t)=x+\int_0^tb\left(s,X_0^x(s),\cL^{X_0^x}_s\right)ds\,.
\end{equation*}
Of course, since $x$ is deterministic, we deduce that $\cL^{X_0^x}_\cdot$ is a Dirac measure centered on the path $X^x_0(\cdot)$. Thus, the ordinary differential equation \eqref{ode} is, from a heuristically standpoint, a good approximation of the stochastic differential equation \eqref{sandra1} as $\varepsilon$ is small. Moreover, the law of $X^X_\varepsilon(t)$ can be approximated by $\delta_{\psi(t)}$. We thus define the following equation (which is closer to a standard SDE)
\begin{equation}
\label{sandra2}
Y_\varepsilon^x(t)
=x
+\int_0^tb\left(s,Y_\varepsilon^x(s),\delta_{\psi(s)}\right)ds
+{\sqrt{\varepsilon}}\int_0^t\sigma\left(s,Y_\varepsilon^x(s),\delta_{\psi(s)}\right)dW(s)
\,.
\end{equation}
However, \eqref{sandra2} has a diffusion coefficient which is not constant. As a consequence, we need to discretize:
\begin{align}
\label{sandra3}
\nonumber
Y_{\varepsilon,m}^x(t)
=x
+\int_0^tb\Big( &\frac{\lfloor ms\rfloor}{m}, Y_\varepsilon^x\left(\frac{\lfloor ms\rfloor}{m}\right),\delta_{\psi\left(\frac{\lfloor ms\rfloor}{m}\right)}\Big)ds
\\
&+{\sqrt{\varepsilon}}\int_0^t
\sigma\left(\frac{\lfloor ms\rfloor}{m}
            ,Y_\varepsilon^x\left(\frac{\lfloor ms\rfloor}{m}\right)
						,\delta_{\psi\left(\frac{\lfloor ms\rfloor}{m}\right)}\right)dW(s),
\end{align}
where $m\in \bN$ and will go to infinity. Here, $\lfloor ms\rfloor$ stands for the floor of $ms$. Lastly, we state a simple result concerning the solvability of \eqref{ode}
\begin{lemma}
\label{BloodyODE}
Under Assumption \ref{def:He-man+Skeletor}, there exists a unique solution $\psi\in C([0,1])$ to \eqref{ode}. Moreover, the map $t\to \psi(t)$ is Lipschitz continuous.
\end{lemma}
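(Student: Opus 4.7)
The plan is to recast the ODE as a classical ODE in $\mathbb{R}^d$ by defining $f(t,y) := b(t, y, \delta_y)$, so that the equation becomes $\dot{\psi}(t) = f(t, \psi(t))$ with $\psi(0) = x$. The main ingredients from Assumption \ref{def:He-man+Skeletor} (i.e., Assumption \ref{ass:MKSDE-MainExistTheo}) that we exploit are the locally Lipschitz/polynomial growth in $x$, the Lipschitz property in $\mu$, the monotonicity condition, and the fact that $W^{(2)}(\delta_y, \delta_{y'}) = |y-y'|$ so that the measure-argument dependence reduces to a genuine Lipschitz dependence on $y$.

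First I would verify local Lipschitz continuity of $f$: for $y,y'\in\mathbb{R}^d$, splitting and using the triangle inequality gives
\begin{align*}
|f(t,y)-f(t,y')|
&\leq |b(t,y,\delta_y)-b(t,y',\delta_y)| + |b(t,y',\delta_y)-b(t,y',\delta_{y'})| \\
&\leq L\bigl(1+|y|^{q-1}+|y'|^{q-1}\bigr)|y-y'| + L|y-y'|.
\end{align*}
This, together with continuity of $f$ in $t$ (which follows from the Hölder-in-time hypothesis), gives local existence and uniqueness via standard Picard--Lindel\"of on the maximal interval of existence.

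Next I would derive an a priori bound to rule out finite-time blow-up on $[0,1]$. Applying the monotonicity condition with $x'=0$, then the Lipschitz-in-$\mu$ property (noting $W^{(2)}(\delta_y,\delta_0)=|y|$), and the fact that $s\mapsto b(s,0,\delta_0)$ is bounded on $[0,1]$ by the H\"older-in-time assumption, yields
\begin{align*}
\frac{d}{dt}|\psi(t)|^2
&= 2\langle \psi(t), b(t,\psi(t),\delta_{\psi(t)})\rangle \\
&\leq 2L|\psi(t)|^2 + 2|\psi(t)|\bigl(L|\psi(t)| + |b(t,0,\delta_0)|\bigr) \\
&\leq C_1|\psi(t)|^2 + C_2,
\end{align*}
for constants $C_1,C_2$ depending only on $L$ and $\sup_{t\in[0,1]}|b(t,0,\delta_0)|$. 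Gr\"onwall then gives a uniform bound $\sup_{t\in[0,1]}|\psi(t)|\leq K$, so $\psi$ does not explode and the solution extends to all of $[0,1]$.

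Finally, for Lipschitz continuity of $t\mapsto \psi(t)$, I would use the integral form of the ODE together with the polynomial growth bound to estimate
\begin{align*}
|\psi(t)-\psi(s)|
&\leq \int_s^t |b(r,\psi(r),\delta_{\psi(r)})|\,dr \\
&\leq |t-s|\sup_{r\in[0,1]}\Bigl( L(1+|\psi(r)|^{q-1})|\psi(r)| + L|\psi(r)| + |b(r,0,\delta_0)|\Bigr),
\end{align*}
and the uniform bound $K$ on $\psi$ combined with the boundedness of $b(\cdot,0,\delta_0)$ makes the right-hand side a constant multiple of $|t-s|$. The only mildly delicate point is the a priori estimate step above: had the monotonicity condition included a measure-dependent perturbation or not been compatible with the Lipschitz-in-$\mu$ control, one would have to be more careful, but because $W^{(2)}(\delta_y,\delta_0)=|y|$ is absorbed into the quadratic Gr\"onwall estimate, no obstacle arises.
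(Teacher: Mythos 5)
Your proposal is correct and follows essentially the same route as the paper: reduce to a classical ODE by plugging in $\delta_{\psi(t)}$, obtain local well-posedness from the local Lipschitz/monotonicity structure (you via Picard--Lindel\"of applied to $f(t,y)=b(t,y,\delta_y)$, the paper via Peano for existence plus monotonicity and local Lipschitzness for uniqueness), then the same Gr\"onwall a priori bound on $|\psi|^2$ from the monotonicity condition, and the same integral estimate using the polynomial growth and the boundedness of $t\mapsto b(t,0,\delta_0)$ for the Lipschitz-in-time conclusion. The only difference is the choice of local existence theorem, which is immaterial under the stated assumptions.
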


\begin{proof}
Existence of a local solution comes from the Peano Existence Theorem. Uniqueness follows from the Monotonicity and Lipschitz/locally Lipschitz properties. In order to get a global solution, we consider the square of the solution, use the monotonicity condition and Gr\"onwall to argue
\begin{align*}
\sup_{t\in[0,T]} |\psi(t)| 
=& |x| + 2 \sup_{t\in[0,T]} \int_0^t \Big\langle \psi(s), b(s, \psi(s), \delta_{\psi(s)}) \Big\rangle ds
\\
\leq&
|x| + \int_0^T \Big( 2L^2\|\psi\|_{\infty, s} + |b(s, 0, \delta_0)| \Big) ds
\\
\leq& \Big( |x| + \int_0^T |b(s, 0, \delta_{0})| ds \Big)e^{2LT}.
\end{align*}
From this we see
\begin{align*}
|\psi(t) - \psi(s)| 
&
\leq
 \int_s^t \Big( \|b(\cdot, 0, \delta_0)\|_{\infty} + L(1+\|\psi\|_{\infty}^q) + L \|\psi\|_\infty\Big) dr
\\
&
\leq
 \textrm{O}\big(|t-s|\big),
\end{align*}
which yields the Lipschitz continuity. 
\end{proof}

\subsubsection{The main result}
\label{sec:methodologyForSupSandra}
We now state the main theorem concerning an LDP for \eqref{sandra1} in the topology of the uniform norm and prove it in the remaining subsections.
\begin{theorem}
\label{samsung}
Under the hypotheses of the section, the diffusion $X^X_{\varepsilon}$ satisfies a Large Deviations Principle in $C([0,1])$ equipped with the topology of the uniform norm with the good rate function
\begin{equation*}
I^x(f) 
:= 
\inf
     \frac{1}{2}\int_0^t\left|\dot g(t)\right|^2dt,
\end{equation*}
the infimum is taken over the set 
$$\left\{g\in H^{\otimes d'}:f(t)=x+\int_0^t b\left(s,f(s),\delta_{\psi(s)}\right) ds
                         +\int_0^t \sigma\left(s,f(s),\delta_{\psi(s)}\right)\dot g(s)ds\right\},$$
and where $\psi$ is the solution to \eqref{ode}.
\end{theorem}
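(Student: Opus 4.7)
I would deduce the LDP for $X^x_\varepsilon$ from the classical Freidlin--Wentzell theory through a chain of exponential approximations built from the auxiliary processes \eqref{ode}--\eqref{sandra3}: first, establish an LDP for the discretized SDE $Y^x_{\varepsilon,m}$, whose coefficients no longer depend on the unknown law of the process; second, pass from $Y^x_{\varepsilon,m}$ to $Y^x_\varepsilon$ by exponential equivalence as $m\to\infty$; third, pass from $Y^x_\varepsilon$ to $X^x_\varepsilon$ by exponential equivalence as $\varepsilon\to 0$; finally, identify the limiting rate function. The assembly is the standard extension-by-approximation LDP machinery, e.g.~\cite[Theorems 4.2.13 and 4.2.23]{DemboZeitouni2010}.

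\textbf{LDP for the discretized process.} On each grid cell $[k/m,(k+1)/m)$ the coefficients of \eqref{sandra3} are deterministic, time-frozen and spatially Lipschitz, so $Y^x_{\varepsilon,m}$ is a bona fide standard SDE whose solution map from the input $\sqrt{\varepsilon}W$ into $(C([0,1]),\|\cdot\|_\infty)$ is continuous. The classical Freidlin--Wentzell LDP (Schilder plus the contraction principle) therefore yields an LDP with good rate function
\[ I^x_m(f)=\inf\Bigl\{\tfrac12\|\dot g\|_2^2 \;:\; f=\Phi^x_m(g)\Bigr\}, \]
where $\Phi^x_m$ is the piecewise-frozen skeleton corresponding to \eqref{sandra3}.

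\textbf{Exponential equivalences.} The two required estimates are, for every $\delta>0$:
\begin{align*}
\limsup_{\varepsilon\to 0}\varepsilon\log \bP\bigl[\|Y^x_{\varepsilon,m}-Y^x_\varepsilon\|_\infty>\delta\bigr]&\xrightarrow[m\to\infty]{}-\infty,\\
\limsup_{\varepsilon\to 0}\varepsilon\log \bP\bigl[\|Y^x_\varepsilon-X^x_\varepsilon\|_\infty>\delta\bigr]&=-\infty.
\end{align*}
For the first, subtracting the dynamics of $Y^x_\varepsilon$ and $Y^x_{\varepsilon,m}$, then applying the time-H\"older modulus of $b$ and $\sigma$ granted by Assumption \ref{def:He-man+Skeletor} together with Bernstein-type exponential martingale inequalities (valid because $\sigma$ is bounded) yields a rate that improves polynomially in $1/m$. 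For the second, set $Z_\varepsilon:=X^x_\varepsilon-Y^x_\varepsilon$; It\^o's formula applied to $|Z_\varepsilon|^2$, combined with the monotonicity of $b$ in the spatial variable and its Lipschitz property in the measure argument, yields
\[ |Z_\varepsilon(t)|^2 \lesssim \int_0^t |Z_\varepsilon(s)|^2 ds + \int_0^t W^{(2)}\bigl(\cL^{X^x_\varepsilon}_s,\delta_{\psi(s)}\bigr)^2 ds + N_\varepsilon(t), \]
where $N_\varepsilon$ is a martingale with bounded integrand. The Wasserstein coupling is split via the triangle inequality
$W^{(2)}(\cL^{X^x_\varepsilon}_s,\delta_{\psi(s)})\leq \bE[|Z_\varepsilon(s)|^2]^{1/2}+\bE[|Y^x_\varepsilon(s)-\psi(s)|^2]^{1/2}$, the second term being $O(\sqrt{\varepsilon})$ by the moment bounds of Theorem \ref{theo:LocLipMcK-V.ExistUniq} applied to $Y^x_\varepsilon-\psi$. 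A Gr\"onwall argument in expectation closes the loop, and an exponential Bernstein bound on $N_\varepsilon$ then lifts this $L^2$ estimate to the desired $\exp(-C/\varepsilon)$ decay.

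\textbf{Main obstacle and rate-function identification.} The delicate step is the second exponential equivalence, because the Wasserstein term feeds the pathwise deviation of $X^x_\varepsilon$ back into itself through its own law; breaking this loop requires the intermediate deterministic skeleton $\psi$ together with the \emph{bounded}-$\sigma$ hypothesis of Assumption \ref{def:He-man+Skeletor} (without which only polynomial tail bounds would be available and the exponential scale would be lost). Once both equivalences hold, the LDP for $X^x_\varepsilon$ with good rate function $\lim_m I^x_m$ follows from the standard assembly machinery. Identifying this limit with the stated $I^x$ reduces to showing that the discrete skeletons $\Phi^x_m$ converge to the continuous skeleton
\[ \Phi^x(g)(t):=x+\int_0^t b\bigl(s,\Phi^x(g)(s),\delta_{\psi(s)}\bigr)\,ds+\int_0^t\sigma\bigl(s,\Phi^x(g)(s),\delta_{\psi(s)}\bigr)\dot g(s)\,ds, \]
uniformly on $\|\dot g\|_2$-level sets, which is a direct consequence of the time-H\"older regularity of the coefficients and standard Gr\"onwall stability for the skeleton ODE.
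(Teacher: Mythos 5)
Your overall architecture coincides with the paper's: an LDP for the time-discretized system via Schilder's theorem and the contraction principle, exponential equivalence of $Y^x_{\varepsilon,m}$ and $Y^x_\varepsilon$ as $m\to\infty$, exponential equivalence of $Y^x_\varepsilon$ and $X^x_\varepsilon$ as $\varepsilon\to 0$, and identification of the rate function through uniform convergence of the discrete skeleton maps on level sets (Proposition \ref{blablabla}). The gap lies in how you propose to obtain the exponential equivalences.

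Exponential equivalence requires $\limsup_{\varepsilon\to 0}\varepsilon\log\bP[\|X^x_\varepsilon-Y^x_\varepsilon\|_\infty>\delta]=-\infty$, not merely a bound of the form $\exp(-C/\varepsilon)$ for some fixed constant. The route you sketch --- pathwise Gr\"onwall for $|Z_\varepsilon|^2$ followed by ``an exponential Bernstein bound on $N_\varepsilon$'' --- cannot deliver $-\infty$. First, the integrand of $N_\varepsilon$ is $2\sqrt{\varepsilon}\,Z_\varepsilon^T(\sigma\text{-difference})$ and is not bounded, so some localization is unavoidable; but even after stopping at the first time $|Z_\varepsilon|$ reaches $\delta$, the quadratic variation of $N_\varepsilon$ is only of order $\varepsilon\,\delta^2(\delta^2+\rho(\varepsilon)^2)$, and Bernstein then gives $\varepsilon\log\bP\le -c(\delta)$ with a finite constant that does not improve as $\varepsilon\to0$. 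The superexponential decay comes from a finer mechanism, which is precisely Lemma \ref{eurodance} (DZ Lemma 5.6.18): both the drift difference and the diffusion difference are bounded by a constant times $(\rho(\varepsilon)^2+|Z_\varepsilon|^2)^{1/2}$ with $\rho(\varepsilon)\to0$, and the resulting estimate contains the term $\log\big(\rho(\varepsilon)^2/(\rho(\varepsilon)^2+\delta^2)\big)$, which diverges to $-\infty$. Note that this lemma needs a bound on $|b_t|$ itself, not only on $\langle Z_\varepsilon,b_t\rangle$, so the one-sided monotonicity you invoke (a nice simplification for the Gr\"onwall step) does not suffice there; the paper uses the local Lipschitz constant $L_R$ after stopping the processes at the exit of a ball of radius $R+1$.

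This points to the second missing ingredient: localization and control of ball exits. Since $b$ is only locally Lipschitz with polynomial growth, both equivalence steps are carried out up to stopping times, and one must prove that the probability of exiting a radius-$R$ ball before time $1$ is superexponentially small. In the paper this is obtained from the already-established LDP for $Y^x_{\varepsilon,m}$ (resp.\ $Y^x_\varepsilon$) together with a Gr\"onwall argument on the skeleton equation showing that the infimum of the rate function over exiting paths diverges as $R\to\infty$ (end of the proof of Lemma \ref{scooter}); the $m$-equivalence additionally needs the Gaussian increment estimate of Lemma \ref{camion} over intervals of length $1/m$. Neither appears in your sketch, and your first equivalence genuinely needs the local Lipschitz property (two different spatial arguments of $b$ are compared), so it cannot be bypassed by monotonicity. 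Without these ingredients the scheme, as written, yields only a finite exponential rate of closeness and hence does not transfer the LDP.
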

The ODE appearing in the infimum is easily recognizable as the Skeleton of SDE \eqref{sandra2}. Lastly, if $\sigma$ is a square matrix and if $a:=\sigma\sigma^T$ is uniformly strictly positive, the preceding formula for the rate function simplifies into
\begin{align*}
I^x(\varphi)
:=
\frac{1}{2}\int_0^t\left(\dot \varphi(t) \right.&\left. -b\left(t,\varphi(t),\delta_{\psi(t)}\right)\right)^T
\\
& \qquad \times 
a^{-1}\left(t,f(t),\delta_{\psi(t)}\right)\left(\dot \varphi(t)-b\left(t,\varphi(t),\delta_{\psi(t)}\right)\right)dt\,.
\end{align*}

\subsubsection*{Methodology}

From a methodological point of view, to show that the family $\left(X^X_{\varepsilon}\right)_{\varepsilon>0}$ satisfies a LDP in the supremum norm topology, we first show that the approximation $\left(Y_{\varepsilon,m}^x\right)_{\varepsilon>0,m\in\mathbb{N}}$ given in \eqref{sandra3} satisfies a LDP with the good rate function $I^x$ (defined below in \eqref{lavie}). Then, we prove that $\left(Y_{\varepsilon,m}^x\right)_{\varepsilon>0,m\in\mathbb{N}}$ is exponentially equivalent to $\left(Y_\varepsilon^x\right)_{\varepsilon>0}$ as $m$ goes to infinity and $\varepsilon$ goes to zero and since LDPs do not distinguish between exponentially equivalent families (see e.g. \cite[Theorem 2.21]{herrmann2013stochastic}), we deduce that $\left(Y_\varepsilon^x\right)_{\varepsilon>0}$ satisfies a LDP with the good rate function $I^x$. Finally, we show that $\left(X_{\varepsilon}^x\right)_{\varepsilon>0}$ and $\left(Y_\varepsilon^x\right)_{\varepsilon>0}$ are exponentially equivalent as $\varepsilon$ goes to zero. This implies, via the same argument, that $\left(X_{\varepsilon}^x \right)_{\varepsilon>0}$ satisfies a LDP with the good rate function~$I^x$. We make use of standard results from \cite{DemboZeitouni2010}, some of which are recalled in the Appendix below.

\subsubsection[Large Deviations principle for ODE Decoupled MV-SDE]{Large Deviations principle for $Y_\varepsilon^x$}
We follow \cite{DemboZeitouni2010} plus the techniques used in \cite{HerrmannImkellerPeithmann2008} for having a drift coefficient which is only \emph{locally} Lipschitz, but adequately adapted to the current setting MV-SDE setting.

\begin{proposition}
\label{manue}
Under the hypotheses of the section, the family of diffusions $\left(Y^X_{\varepsilon}\right)_{\varepsilon>0}$ satisfies a Large Deviations principle in $C([0,1])$ equipped with the topology of the uniform norm with the good rate function
\begin{equation}
\label{lavie}
I^x(f):=\inf_{\left\{g\in H^{\otimes d'}\,:\,f(t)=x+\int_0^t b\left(s, f(s),\delta_{\psi(s)}\right)ds+\int_0^t \sigma\left(s, f(s),\delta_{\psi(s)}\right)\dot g(s)ds\right\}}\frac{1}{2}\int_0^1\left|\dot g(t)\right|^2dt\,,
\end{equation}
$\psi$ is the unique solution to \eqref{ode}.
\end{proposition}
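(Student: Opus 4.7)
The plan is to follow the three-step Freidlin--Wentzell strategy outlined immediately before the proposition statement: first establish an LDP for the discretized approximation $Y^{x}_{\varepsilon,m}$ in \eqref{sandra3} by a direct application of the contraction principle to Schilder's theorem; next show exponential equivalence between $Y^{x}_{\varepsilon,m}$ and $Y^{x}_\varepsilon$ as $\varepsilon\searrow 0$ and $m\to\infty$; and finally identify the limiting rate function with $I^x$.

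\textbf{Step 1 (LDP for the discretization).} Since $\psi$ is deterministic by Lemma~\ref{BloodyODE}, the coefficients of $Y^{x}_{\varepsilon,m}$ depend on $(s,y)$ only, and we can read $Y^{x}_{\varepsilon,m}$ as an explicit piecewise-affine-in-time functional of the Brownian path: on $[k/m,(k+1)/m]$ the process is updated by adding a deterministic drift increment plus $\sqrt{\varepsilon}$ times a deterministic matrix applied to the Brownian increment. Thus there is a map $F_m : (C([0,1];\mathbb{R}^{d'}),\|\cdot\|_\infty) \to (C([0,1];\mathbb{R}^d),\|\cdot\|_\infty)$ with $Y^{x}_{\varepsilon,m}=F_m(\sqrt{\varepsilon}\,W)$, and the recursive definition immediately gives continuity of $F_m$ in the uniform topology (since $b(\cdot,\cdot,\delta_{\psi(\cdot)})$ and $\sigma(\cdot,\cdot,\delta_{\psi(\cdot)})$ are continuous in $y$, and the values $Y^{x}_{\varepsilon,m}(k/m)$ depend continuously on finitely many values of the input path). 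Schilder's theorem for $(\sqrt{\varepsilon}W)_{\varepsilon>0}$ combined with the contraction principle \cite[Thm.~4.2.1]{DemboZeitouni2010} then yields an LDP for $(Y^{x}_{\varepsilon,m})_{\varepsilon>0}$ with good rate function
\begin{equation*}
I^{x}_m(f) \;=\; \inf\Big\{\tfrac12\int_0^1 |\dot h(s)|^2\,ds : h\in H^{\otimes d'},\ F_m(h)=f\Big\}.
\end{equation*}

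\textbf{Step 2 (exponential equivalence).} The main obstacle is to prove
\begin{equation*}
\lim_{m\to\infty}\limsup_{\varepsilon\to 0}\ \varepsilon\,\log \mathbb{P}\Big[\|Y^{x}_\varepsilon - Y^{x}_{\varepsilon,m}\|_\infty > \rho\Big] \;=\; -\infty\qquad \forall \rho>0.
\end{equation*}
Since $b$ is only locally Lipschitz with polynomial growth, one cannot close a Gronwall argument directly. Following \cite{HerrmannImkellerPeithmann2008}, I would localize with the stopping time $\tau_R=\inf\{t\in[0,1]:|Y^{x}_\varepsilon(t)|\vee|Y^{x}_{\varepsilon,m}(t)|>R\}$. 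Using the boundedness $|\sigma|\le M$ and exponential martingale moment estimates (Novikov/BDG in exponential form), the probability $\mathbb{P}[\tau_R<1]$ decays like $\exp(-cR^2/\varepsilon)$ for large $R$. On $\{\tau_R\ge 1\}$, the drift is effectively Lipschitz with constant $C_R=C(1+R^{q-1})$; combining the spatial Lipschitz estimates for $b,\sigma$ with the H\"older continuity in $s$ guaranteed by Assumption~\ref{def:He-man+Skeletor} and the local Lipschitz continuity of $\psi$ from Lemma~\ref{BloodyODE}, one obtains
\begin{equation*}
\|Y^{x}_\varepsilon - Y^{x}_{\varepsilon,m}\|_{\infty,t\wedge\tau_R}^2 \;\leq\; C_R\int_0^{t\wedge\tau_R}\|Y^{x}_\varepsilon - Y^{x}_{\varepsilon,m}\|_{\infty,s}^2 ds + r_m(\varepsilon,W),
\end{equation*}
where the remainder $r_m(\varepsilon,W)$ involves $m^{-2\beta}$, a term measuring the oscillation of $\sqrt{\varepsilon}W$ on partitions of mesh $1/m$, and a bounded stochastic integral of size $\sqrt{\varepsilon}/\sqrt{m}$. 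Gronwall gives a bound of the form $\|Y^{x}_\varepsilon - Y^{x}_{\varepsilon,m}\|_\infty \le e^{C_R}\sqrt{r_m(\varepsilon,W)}$ on $\{\tau_R\ge 1\}$, and the exponential concentration of the modulus of continuity of $\sqrt{\varepsilon}W$ shows that $\mathbb{P}[r_m(\varepsilon,W)>\eta^2]$ is Gaussian-small on the $1/\varepsilon$ scale once $m$ is large enough. Balancing $R=R(\varepsilon,m)$ so that the exit probability and the Gronwall blow-up both vanish exponentially will be the delicate point.

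\textbf{Step 3 (identification of the rate function).} Steps 1 and 2 together with the approximation theorem \cite[Thm.~4.2.16]{DemboZeitouni2010} yield an LDP for $(Y^{x}_\varepsilon)_{\varepsilon>0}$ in $(C([0,1]),\|\cdot\|_\infty)$ with good rate function
\begin{equation*}
\widetilde I^{x}(f) \;=\; \sup_{\delta>0}\liminf_{m\to\infty}\inf_{g\in B_\infty(f,\delta)} I^{x}_m(g).
\end{equation*}
To identify $\widetilde I^{x}=I^{x}$, I would show that for any fixed $h\in H^{\otimes d'}$ with $\|\dot h\|_2\le K$ the sequence $F_m(h)$ converges in sup norm to the Skeleton solution $f_h$ of $f(t)=x+\int_0^t b(s,f(s),\delta_{\psi(s)})\,ds+\int_0^t\sigma(s,f(s),\delta_{\psi(s)})\dot h(s)\,ds$, uniformly over level sets of $\frac12\|\dot h\|_2^2$. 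The upper bound $\widetilde I^{x}\le I^{x}$ follows by feeding any near-minimizer $h$ of $I^{x}(f)$ into $F_m$ and using uniform convergence. For the lower bound $\widetilde I^{x}\ge I^{x}$, one extracts, from a sequence $(h_m)$ with $F_m(h_m)\to f$ and bounded $\tfrac12\|\dot h_m\|_2^2$, a weak limit $h^\ast\in H^{\otimes d'}$ via weak compactness in $L^2$, and checks that $h^\ast$ solves the Skeleton with solution $f$; lower semicontinuity of the $L^2$ norm under weak convergence closes the argument.
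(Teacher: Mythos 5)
Your proposal follows essentially the same three-step architecture as the paper (which is exactly the methodology announced before the proposition): an LDP for the Euler-type discretization via Schilder plus the contraction principle, exponential equivalence of $Y^x_{\varepsilon,m}$ and $Y^x_\varepsilon$, and identification of the limiting rate function through uniform convergence of the approximate skeleton maps $F^m$ to $F$ on bounded subsets of $H^{\otimes d'}$. The differences are in the tools: for Step 2 the paper does not localize and balance $R=R(\varepsilon,m)$ as you suggest; it fixes $R$ and $\rho$, writes $z_t=Y^x_{\varepsilon,m}(t)-Y^x_\varepsilon(t)$ in the form \eqref{truc}, and invokes Lemma \ref{eurodance} (Dembo--Zeitouni, Lemma 5.6.18), whose bound has the shape $K_R+\log\big(\rho(m)^2/(\rho(m)^2+\delta^2)\big)$ --- the $R$-dependent constant enters only additively, so for each fixed $R$ the iterated limit $\varepsilon\to0$, $m\to\infty$ is $-\infty$ with no balancing needed; the exit event $\{\tau_1<1\}$ is then handled separately, with $R\to\infty$ taken last, by combining the Brownian oscillation bound (Lemma \ref{camion}) with the already established LDP for $Y^x_{\varepsilon,m}$ and a Gr\"onwall comparison showing that $\inf I^x_m$ over paths exiting the ball of radius $R$ tends to $+\infty$. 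This is precisely the point you flag as ``delicate'' and leave unexecuted; your direct route via exponential martingale estimates for $\bP[\tau_R<1]$ can be made to work (using the monotonicity of $b$ relative to the bounded deterministic flow and $|\sigma|\le M$), but it is not needed. For Step 3 you appeal to the exponentially-good-approximation theorem and then identify the rate function by hand (near-minimizers for the upper bound, weak $L^2$ compactness for the lower bound); the paper instead cites \cite[Theorem 4.2.23]{DemboZeitouni2010} (Proposition \ref{blablabla}), which delivers the identification once \eqref{marina}, the uniform convergence $\sup_{\|g\|_{H^{\otimes d'}}\le\alpha}\|F^m(g)-F(g)\|_\infty\to0$, is proved by a Gr\"onwall argument --- the same key estimate your sketch requires, so your manual identification buys nothing extra beyond reproving a special case of that theorem.
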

Before proving Proposition \ref{manue}, we first show that the approximation $\left(Y_{\varepsilon,m}\right)_{\varepsilon>0,m\in\mathbb{N}}$ satisfies a Large Deviation Principle with the good rate function $I^x_m$ defined as
\begin{equation*}
I^x_m(f):=
\inf
\frac{1}{2}\int_0^1\left|\dot g(t)\right|^2dt,
\end{equation*}
where the infimum is taken over the set
\begin{align*}
\Bigg\{g\in H^{\otimes d'}\,:\,
f(t)=x
&+\int_0^t b\left(\frac{\lfloor ms\rfloor}{m}, f\big(\frac{\lfloor ms\rfloor}{m}\big), \delta_{\psi\big(\frac{\lfloor ms\rfloor}{m}\big)}\right)ds 
\\ & 
 +\int_0^t\sigma\left(\frac{\lfloor ms\rfloor}{m},f\big(\frac{\lfloor ms\rfloor}{m}\big)
                                                 ,\delta_{\psi\big(\frac{\lfloor ms\rfloor}{m}\big)}\right)\dot g(s)ds\Bigg\}.
\end{align*}
This is an easy exercise using the contraction principle (see \cite{DemboZeitouni2010}) so the proof is omitted. Let us just note that we need to introduce the map $F^m$ defined via $h=F^m(g)$, where
\begin{align*}
h(t) = h\left(\frac{k}{m}\right)
     &+ b\left(\frac{k}{m},h\left(\frac{k}{m}\right),\delta_{\psi\left(\frac{k}{m}\right)}\right)\left(t-\frac{k}{m}\right)\\
		 & +\sigma\left(\frac{k}{m},h\left(\frac{k}{m}\right),\delta_{\psi\left(\frac{k}{m}\right)}\right)\left(g(t)-g\left(\frac{k}{m}\right)\right)\,,
\end{align*}
for $t\in\left[\frac{k}{m}, \frac{k+1}{m}\right]$, $0\leq k\leq m-1$, and $h(0)=x$.

Proposition \ref{manue} follows by showing exponential equivalence as $\varepsilon$ goes to $0$ and $m$ goes to infinity of the the families $\left(Y^X_{\varepsilon}\right)_{\varepsilon>0}$ and $\left(Y^x_{\varepsilon,m}\right)_{\varepsilon>0,m\in\bN}$.
\begin{lemma}
\label{scooter}
For any $\delta>0$, we have:
\begin{equation*}
\lim_{m\to+\infty} \limsup_{\varepsilon\to 0}
\varepsilon\log\Big( \mathbb{P}\Big[ \|Y_\varepsilon^x-Y_{\varepsilon,m}^x\|_\infty>\delta \Big] \Big)=-\infty\,.
\end{equation*}
\end{lemma}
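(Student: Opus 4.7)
The plan is to exploit the fact that
\begin{align*}
Y_\varepsilon^x(t)-Y_{\varepsilon,m}^x(t)
&=\int_0^t \Delta b_m(s)\,ds + \sqrt{\varepsilon}\int_0^t \Delta\sigma_m(s)\,dW(s),
\end{align*}
where $\Delta b_m(s):=b(s,Y_\varepsilon^x(s),\delta_{\psi(s)})-b(\tfrac{\lfloor ms\rfloor}{m},Y_\varepsilon^x(\tfrac{\lfloor ms\rfloor}{m}),\delta_{\psi(\lfloor ms\rfloor/m)})$ and analogously for $\Delta\sigma_m$, is an \emph{explicit} functional of $Y_\varepsilon^x$ and $W$, not a new MV-SDE. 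The proof therefore reduces to a pathwise bound on $\Delta b_m,\Delta\sigma_m$ on a ``good'' event of overwhelming probability, plus a Bernstein-type exponential estimate for the residual stochastic integral. Three error sources contribute to $\Delta b_m$: a time-increment of order $m^{-\beta}$, handled by the $\beta$-H\"older assumption in Assumption \ref{def:He-man+Skeletor}; a spatial increment $|Y_\varepsilon^x(s)-Y_\varepsilon^x(\lfloor ms\rfloor/m)|$, handled by the locally Lipschitz/polynomial-growth condition once $\|Y_\varepsilon^x\|_\infty$ is localised; and a measure increment $W^{(2)}(\delta_{\psi(s)},\delta_{\psi(\lfloor ms\rfloor/m)})=|\psi(s)-\psi(\lfloor ms\rfloor/m)|\le L/m$, via Lemma \ref{BloodyODE}.

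The first step is an exponential-in-$1/\varepsilon$ localisation, $\mathbb{P}[\|Y_\varepsilon^x\|_\infty>N]\le 2e^{-c_N/\varepsilon}$ with $c_N\to\infty$ as $N\to\infty$. I set $M_\varepsilon(t):=\sqrt{\varepsilon}\int_0^t\sigma(s,Y_\varepsilon^x(s),\delta_{\psi(s)})dW(s)$ and $Z_\varepsilon:=Y_\varepsilon^x-M_\varepsilon$. Applying the monotonicity part of Assumption \ref{ass:MKSDE-MainExistTheo}(4) to $\tfrac{d}{dt}|Z_\varepsilon|^2$, combined with the polynomial-growth control of $|b(t,M_\varepsilon(t),\delta_{\psi(t)})|$ (using also the Lipschitz dependence in $\mu$, the boundedness of $b(\cdot,0,\delta_0)$ which follows from the time-H\"older assumption, and $\|\psi\|_\infty<\infty$), Gronwall yields a deterministic bound $\|Y_\varepsilon^x\|_\infty\le F(R)$ on $\{\|M_\varepsilon\|_\infty\le R\}$. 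Since $|\sigma|\le M$, Bernstein's inequality gives $\mathbb{P}[\|M_\varepsilon\|_\infty>R]\le 2e^{-R^2/(2\varepsilon M^2)}$, and picking $R_N$ so that $F(R_N)\le N$ finishes the claim. A second, analogous Bernstein estimate, applied separately on each $[k/m,(k+1)/m]$ and combined with a union bound over $k=0,\dots,m-1$, controls the local oscillation: for any $\alpha\in(0,\tfrac12)$,
\begin{align*}
\mathbb{P}\!\left[\max_{k}\sup_{t\in[k/m,(k+1)/m]}\!\!\bigl|Y_\varepsilon^x(t)-Y_\varepsilon^x(\tfrac{k}{m})\bigr|>m^{-\alpha},\ \|Y_\varepsilon^x\|_\infty\le N\right]\le 2m\, e^{-m^{1-2\alpha}/(2\varepsilon M^2)}+o(1),
\end{align*}
the $o(1)$ absorbing the drift contribution $C(N)/m\ll m^{-\alpha}$ on each sub-interval.

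On the intersection of these two good events, the three Lipschitz/growth estimates above combine to the pathwise bound $|\Delta b_m(s)|+|\Delta\sigma_m(s)|\le C(N)\bigl(m^{-\beta}+(1+N^{q-1})m^{-\alpha}+m^{-1}\bigr)$, so the drift contribution to $\|Y_\varepsilon^x-Y_{\varepsilon,m}^x\|_\infty$ is smaller than $\delta/2$ deterministically for $m$ large and every $\varepsilon$. A final Bernstein estimate for the bounded-integrand martingale $\sqrt{\varepsilon}\int_0^\cdot\Delta\sigma_m\,dW$ shows its supremum exceeds $\delta/2$ only on an event of probability $\le 2\exp\bigl(-\delta^2/(8\varepsilon C(N)^2(m^{-\beta}+m^{-\alpha}+m^{-1})^2)\bigr)$, whose $\varepsilon\log$-rate diverges to $-\infty$ as $m\to\infty$ at $N$ fixed. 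A union bound over the three exponentially small pieces, followed by $\varepsilon\downarrow 0$ at fixed $N,m$ and then $m,N\to\infty$, concludes. The principal technical obstacle is the first step (exponential localisation), since $b$ has only super-linear growth; the key insight is that the monotonicity condition, together with boundedness of $\sigma$, reduces the analysis of $Y_\varepsilon^x$ to that of the martingale $M_\varepsilon$, for which classical Bernstein directly applies.
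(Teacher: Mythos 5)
Your proposal is correct in substance and shares the paper's basic skeleton --- the same representation of $Y_\varepsilon^x-Y_{\varepsilon,m}^x$ as an explicit functional of $Y_\varepsilon^x$ and $W$, the same three error sources (time increment of order $m^{-\beta}$, spatial oscillation of $Y_\varepsilon^x$ over grid intervals, measure increment $|\psi(s)-\psi(\lfloor ms\rfloor/m)|$ via Lemma \ref{BloodyODE}), and the same localize-then-estimate-exponentially structure --- but you implement the two key steps differently. Where the paper bounds the drift and diffusion of the difference process by $B_R(\rho(m)^2+|z_t|^2)^{1/2}$ and feeds them into the ready-made exit estimate of Lemma \ref{eurodance}, and handles the localization event $\{\tau_1<1\}$ by combining the Brownian oscillation bound of Lemma \ref{camion} with the LDP already established for $Y^x_{\varepsilon,m}$ (plus a Gr\"onwall argument showing the rate of exiting a ball of radius $R$ blows up as $R\to\infty$), you instead (i) prove a self-contained exponential tail bound $\mathbb{P}[\|Y_\varepsilon^x\|_\infty>N]\lesssim e^{-c_N/\varepsilon}$ by writing $Y_\varepsilon^x=Z_\varepsilon+M_\varepsilon$, exploiting the monotonicity of $b$ and the boundedness of $\sigma$ so that only the bounded-integrand martingale $M_\varepsilon$ needs an exponential estimate, and (ii) make the drift difference deterministically small on the good event and apply a Bernstein bound directly to $\sqrt{\varepsilon}\int_0^\cdot\Delta\sigma_m\,dW$. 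Your route is self-contained (it does not lean on the LDP for the discretized family nor on the rather terse rate-function blow-up argument), at the price of redoing some a priori estimates; the paper's route is shorter because Lemma \ref{eurodance} packages the drift and diffusion perturbations in one stroke. Two points to tighten: the integrand bound $\kappa_m(N)$ for $\Delta\sigma_m$ holds only on the good event, so the final Bernstein estimate should be applied to the martingale stopped at the first time the oscillation or localization bound fails (precisely the role of the paper's stopping time $\tau_1$); and the limits must be taken as you indicate at the end --- bound $\limsup_{\varepsilon\to0}$ for fixed $N$ and all large $m$, let $m\to\infty$, and only then invoke $c_N\to\infty$ --- which is legitimate since your bound holds for every $N$.
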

\begin{proof}
Fix $\delta>0$. Let $z_t:=Y^x_{\varepsilon,m}(t)-Y^X_\varepsilon(t)$, and for any $\rho,R>0$, define the stopping time 
\begin{equation*}
\tau:=\min\Big\{ \inf\left\{t\geq0:
\left|Y^x_{\varepsilon,m}(t)\right|\geq R+1\right\}\, , \,
                  \inf\left\{t\geq0:\left|Y^X_{\varepsilon}(t)\right|\geq R+1\right\}
				  \Big\},
\end{equation*}
then
\begin{equation*}
\tau_1:=\min\left\{ 1 \ , \ \inf\big\{t\geq0:\big|Y_\varepsilon^x\Big(\frac{\lfloor mt\rfloor}{m}\Big)-Y_\varepsilon^x(t)\big|\geq\rho\big\}\ , \ \tau \right\}.
\end{equation*}
The process $\left(z_t\right)_{t\in[0,1]}$ is of the form \eqref{truc}, with $z_0=0$,
\begin{align*}
b_t
&:=b\left(\frac{\lfloor mt\rfloor}{m},Y_\varepsilon^x \left(\frac{\lfloor mt\rfloor}{m}\right),\delta_{\psi\left(\frac{\lfloor mt\rfloor}{m}\right)}\right)-b\big(t,Y_\varepsilon^x(t),\delta_{\psi(t)}\big),
\\
\sigma_t
&:=\sigma\left(\frac{\lfloor mt\rfloor}{m},Y_\varepsilon^x \left(\frac{\lfloor mt\rfloor}{m}\right),\delta_{\psi\left(\frac{\lfloor mt\rfloor}{m}\right)}\right)-\sigma\big(t,Y_\varepsilon^x(t),\delta_{\psi(t)}\big)\,.
\end{align*}
Thus, by the local Lipschitz continuity of $b$, by the global Lipschitz continuity of $\sigma$ and the definition of $\tau_1$, it follows that Lemma \ref{eurodance} is applicable here. Indeed, we have:
\begin{align*}
\left|\sigma_t\right|&=\left|\sigma\left(\frac{\lfloor mt\rfloor}{m},Y^X_{\varepsilon}\left(\frac{\lfloor mt\rfloor}{m}\right),\delta_{\psi\left(\frac{\lfloor mt\rfloor}{m}\right)}\right)-\sigma\left(t,Y^X_\varepsilon(t),\delta_{\psi(t)}\right)\right|\\
&\leq\left|\sigma\left(\frac{\lfloor mt\rfloor}{m},Y^X_{\varepsilon}\left(\frac{\lfloor mt\rfloor}{m}\right),\delta_{\psi\left(\frac{\lfloor mt\rfloor}{m}\right)}\right)-\sigma\left(\frac{\lfloor mt\rfloor}{m},Y^X_{\varepsilon}\left(\frac{\lfloor mt\rfloor}{m}\right),\delta_{\psi(t)}\right)\right|\\
&\qquad \qquad
+\left|\sigma\left(\frac{\lfloor mt\rfloor}{m},Y^X_{\varepsilon}\left(\frac{\lfloor mt\rfloor}{m}\right),\delta_{\psi(t)}\right)-\sigma\left(\frac{\lfloor mt\rfloor}{m},Y^X_\varepsilon(t),\delta_{\psi(t)}\right)\right|\\
&\qquad \qquad
+\left|\sigma\left(\frac{\lfloor mt\rfloor}{m},Y^X_\varepsilon(t),\delta_{\psi(t)}\right)-\sigma\left(t,Y^X_\varepsilon(t),\delta_{\psi(t)}\right)\right|\\
&\leq L\left|\psi\left(\frac{\lfloor mt\rfloor}{m}\right)-\psi(t)\right|+\frac{L}{m^\beta}+L|z_t|\\
&\leq M\left(\rho(m)^2+|z_t|^2\right)^{\frac{1}{2}}\,,
\end{align*}
with $M$ large enough and $\rho(m):=\max\left\{\sup_{t\in[0,1]}\left|\psi\left(\frac{\lfloor mt\rfloor}{m}\right)-\psi(t)\right|,\frac{1}{m^\beta}\right\}$ which, using the continuity of $\psi$, goes to $0$ as $m$ goes to infinity.

We argue as follows for the drift coefficient $|b_t|$,
\begin{align*}
\left|b_t\right|
&
=\left|b\left(\frac{\lfloor mt\rfloor}{m},Y^X_{\varepsilon}\left(\frac{\lfloor mt\rfloor}{m}\right),\delta_{\psi\left(\frac{\lfloor mt\rfloor}{m}\right)}\right)-b\left(t,Y^X_\varepsilon(t),\delta_{\psi(t)}\right)\right|
\\
&
\leq\left|b\left(\frac{\lfloor mt\rfloor}{m},Y^X_{\varepsilon}\left(\frac{\lfloor mt\rfloor}{m}\right),\delta_{\psi\left(\frac{\lfloor mt\rfloor}{m}\right)}\right)-b\left(\frac{\lfloor mt\rfloor}{m},Y^X_{\varepsilon}\left(\frac{\lfloor mt\rfloor}{m}\right),\delta_{\psi(t)}\right)\right|
\\
&\qquad \qquad
+\left|b\left(\frac{\lfloor mt\rfloor}{m},Y^X_{\varepsilon}\left(\frac{\lfloor mt\rfloor}{m}\right),\delta_{\psi(t)}\right)-b\left(\frac{\lfloor mt\rfloor}{m},Y^X_\varepsilon(t),\delta_{\psi(t)}\right)\right|
\\
&
\qquad \qquad
+\left|b\left(\frac{\lfloor mt\rfloor}{m},Y^X_\varepsilon(t),\delta_{\psi(t)}\right)-b\left(t,Y^X_\varepsilon(t),\delta_{\psi(t)}\right)\right|
\\
&
\leq L\left|\psi\left(\frac{\lfloor mt\rfloor}{m}\right)-\psi(t)\right|+\frac{L}{m^\beta}+L_{R+1}|z_t|
\\
&
\leq B_R\left(\rho(m)^2+|z_t|^2\right)^{\frac{1}{2}}\,,
\end{align*}
with $B_R$ large enough. This yields for any $\delta>0$ and any $0<\varepsilon\leq1$,
\begin{equation*}
\varepsilon\log\left( \mathbb{P}\left[\sup_{t\in[0,\tau_1]}\left|Y^x_{\varepsilon,m}(t)-Y^X_\varepsilon(t)\right|\geq\delta\right]\right)\leq K_R+\log\left(\frac{\rho(m)^2}{\rho(m)^2+\delta^2}\right).
\end{equation*}
Hence, by considering first $\varepsilon\to 0$ and then $m\to +\infty$,
\begin{equation*}
\lim_{m\to+\infty}\limsup_{\varepsilon\to0^+}\varepsilon\log\left( \mathbb{P}\left[\sup_{t\in[0,\tau_1]}\left|Y^x_{\varepsilon,m}(t)-Y^X_\varepsilon(t)\right|\geq\delta\right]\right) =-\infty\,.
\end{equation*}
Now, since
\begin{equation}
\label{tada}
\left\{\|Y^x_{\varepsilon,m}-Y^X_{\varepsilon}\|_\infty>\delta\right\}\subset\{\tau_1<1\}
\bigcup
\Big\{\sup_{t\in[0,\tau_1]}\left|Y^x_{\varepsilon,m}(t)-Y^X_\varepsilon(t)\right|\geq\delta\Big\}\,,
\end{equation}
the lemma is proved as soon as we show that for all $\rho>0$ and for any $R>0$,
\begin{equation*}
\lim_{m\to\infty}\limsup_{\varepsilon\to0}\varepsilon\log\left(\mathbb{P}\left[\tau_1<1\right]\right)=-\infty\,.
\end{equation*}
To this end, observe first that for $t\leq\tau_1$:
\begin{equation*}
\left|Y^X_{\varepsilon}\left(\frac{\lfloor mt\rfloor}{m}\right)-Y^X_\varepsilon(t)\right|\leq 
C_R\left[\frac{1}{m}+{\sqrt{\varepsilon}}\max_{0\leq k\leq m-1}\sup_{0\leq s\leq\frac{1}{m}}\left|W({\frac{k}{m}+s})-W({\frac{k}{m}})\right|\right]\,,
\end{equation*}
where $C_R$ is the maximum between the uniform bound of $\sigma$, the local bound (on the ball of center zero and radius $R+1$) of $b$ and the uniform bound of $b$ and $\sigma$ for the measure variable. Therefore, for all $m>{4C(R)}/{\rho}$,
\begin{align*}
\mathbb{P}\left[\sup_{0\leq t\leq \tau_1}\left|Y^X_{\varepsilon}\left(\frac{\lfloor mt\rfloor}{m}\right)-Y^X_\varepsilon(t)\right|\geq\frac{\rho}{2}\right]
&
\leq m\mathbb{P}\left[\sup_{0\leq s\leq\frac{1}{m}}\left|W(s)\right|\geq\frac{\frac{\rho}{2}-\frac{C_R}{m}}{{\sqrt{\varepsilon}} C_R}\right]
\\
&
\leq4dm\exp\left(-m\frac{\left(\frac{\rho}{2}-\frac{C_R}{m}\right)^2}{2d\varepsilon C_R^2}\right)\,,
\end{align*}
where the second inequality is the bound of Lemma \ref{camion}.

By taking $\delta$ sufficiently small, if $Y^X_{\varepsilon}$ exits the ball of center $0$ and of radius $R+1$, then, with high probability (quantified by the limit \eqref{tada}), the process $Y^x_{\varepsilon,m}$ exits the ball of center $0$ and of radius $R$. Consequently, to close the proof it is sufficient to prove that the probability that $Y^x_{\varepsilon,m}$ leaves the ball of center $0$ and radius $R$ is very small as $\varepsilon$ goes to zero. 

Recall, that $\left(Y^x_{\varepsilon,m}\right)_{\varepsilon>0}$ satisfies a large deviations principle with the good rate function $I^x_m$ defined previously and hence we can quantify the probability of exiting from aforementioned ball. We remark that the infimum of $I^x_m$ on the set of paths exiting from the ball of center $0$ and radius $R$ goes to infinity as $R$ goes to infinity provided that $m$ is sufficiently large. This remark can be obtained as follows. We consider $f_0:=F^m(0)$ and $f$ which is a path starting from $x$ and exiting from the ball of center $0$ and radius $R$. By $g$, we denote the function such that $f=F^m(g)$. We dominate $\left|f(t)-f_0(t)\right|$ as follows:
\begin{align*}
\left|f(t)-f_0(t)\right|&\lesssim \int_0^t\left(1+\left|f(s)\right|^q+\left|f_0(s)\right|^q\right)\left|f(s)-f_0(s)\right|\left|\dot g(s)\right|ds\\
&+\int_0^t\left|f(s)-f_0(s)\right|\left|\dot g(s)\right|ds\\
&+\int_0^t\left|\sigma\left(\frac{\lfloor ms\rfloor}{m},f_0\big(\frac{\lfloor ms\rfloor}{m}\big),\delta_{\psi\big(\frac{\lfloor ms\rfloor}{m}\big)}\right)\right|\left|\dot g(s)\right|ds\,,
\end{align*}
by using the properties on $b$ (locally Lipschitz with polynomial growth) and $\sigma$ (uniformly Lipschitz). However, the last quantity in the integral can be bounded by $C\left|\dot g(s)\right|$ where $C$ only depends on the function $f_0$. In the same vein, we obtain
\begin{align*}
\left|f(t)-f_0(t)\right|&\lesssim \int_0^t\left(1+\left|f(s)-f_0(s)\right|^q\right)\left|f(s)-f_0(s)\right|\left|\dot g(s)\right|ds+\int_0^t\left|\dot g(s)\right|ds\,.
\end{align*}
As $ab\leq\frac{1}{2}a^2+\frac{1}{2}b^2$, we get
\begin{align*}
\left|f(t)-f_0(t)\right|&\lesssim \int_0^t\left(1+\left|f(s)-f_0(s)\right|^{2q}\right)\left|f(s)-f_0(s)\right|^2ds +\int_0^t\left|\dot g(s)\right|^2ds\,,
\end{align*}
since we work on a finite time interval. However, if $\left|\left|f\right|\right|_\infty\geq R$, then we have that $\left|\left|f-f_0\right|\right|_\infty\geq \xi(R)$ with $\xi(+\infty)=+\infty$. A Gr\"onwall argument suffices to prove that it implies $\int_0^t\left|\dot g(s)\right|^2ds\geq\zeta(R)$ with $\zeta(+\infty)=+\infty$.


\end{proof}
We now are able to prove Proposition \ref{manue}.
\begin{proof}[Proof of Proposition \ref{manue}]
Let $F$ be defined on the space $H^{\otimes d'}$ such that $f=F(g)$ is the unique solution of the integral equation 
\begin{equation*}
f(t)=x+\int_0^tb\left(s,f(s),\delta_{\psi(s)}\right)ds+\int_0^t\sigma\left(s,f(s),\delta_{\psi(s)}\right)\dot g(s)ds\,.
\end{equation*}
The existence and the uniqueness of a continuous  solution is a consequence of the assumptions on $b$ and $\sigma$ and is standard.
\footnote{\label{ExistenceUniquenessFootnote}Local existence of a solution to this ODE comes from Carath\'eodory's Existence Theorem. Uniqueness comes from the Monotonicity and Lipschitz/locally Lipschitz properties. Finally, global existence comes from considering the square of the solution and using the monotonicity condition to obtain a linear growth upper bound condition which ensures the solution does not explode. The function $f$ is $1/2$-H\"older continuous. } 
In view of Lemma \ref{scooter}, the proof of the theorem is completed by combining Schilder's theorem and Proposition \ref{blablabla} 
, as soon as we show that for every $\alpha<\infty$,
\begin{equation}
\label{marina}
\lim_{m\to\infty}\sup_{g\,:\,\left\|g\right\|_{H^{\otimes d'}}\leq\alpha}\,\left\|F^m(g)-F(g)\right\|_\infty=0\,.
\end{equation}
To this end, fix $\alpha<\infty$ and $g\in H^{\otimes d'}$ such that $\left|\left|g\right|\right|_{H^{\otimes d'}}\leq\alpha$. Let $h=F^m(g)$, $f=F(g)$, and $e(t):=\left|f(t)-h(t)\right|^2$. Then for all $t\in[0,1]$,
\begin{align*}
h(t)=x& +
\int_0^tb\left(\frac{\lfloor ms\rfloor}{m},h\left(\frac{\lfloor ms\rfloor}{m}\right),\delta_{\psi\left(\frac{\lfloor ms\rfloor}{m}\right)}\right)ds\\
& +\int_0^t\sigma\left(\frac{\lfloor ms\rfloor}{m},h\left(\frac{\lfloor ms\rfloor}{m}\right),\delta_{\psi\left(\frac{\lfloor ms\rfloor}{m}\right)}\right)\dot g(s)ds\,.
\end{align*}
By the Cauchy-Schwarz inequality and the local Lipschitz property on $b$ and the global Lipschitz property on $\sigma$,
\begin{equation}
\label{marina2}
\sup_{0\leq t\leq1}\left|h(t)-h\left(\frac{\lfloor mt\rfloor}{m}\right)\right|\leq\left(\alpha+1\right)L_\alpha\delta(m)\,,
\end{equation}
where $\delta(m)$ is independent of $g$ for any $m$, and converges to zero as $m$ goes to infinity. To prove the existence of the constant $L_\alpha$, we remark that $\{g\,:\,\left\|g\right\|_{H^{\otimes d'}}\leq\alpha\}$ is a compact.

Applying the Cauchy-Schwarz inequality again, it follows by the $\beta$-H\"older time continuity of $b,\sigma$, the Lipschitz and local Lipschitz continuity of $b$ and the global Lipschitz continuity of $\sigma$ that
\begin{align*}
\left|f(t)-h(t)\right|
& \leq 
L_\alpha(\alpha+1)\sqrt{\int_0^t\left|f(s)-h\left(\frac{\lfloor ms\rfloor}{m}\right)\right|^2ds}
\\
&
\qquad\qquad\quad 
+L\sqrt{\int_0^t\left|\psi(s)-\psi\left(\frac{\lfloor ms\rfloor}{m}\right)\right|^2 ds}
+\frac{L_\alpha(\alpha+1)+L}{m^\beta}\,.
\end{align*}
Thus, due to \eqref{marina2} and the continuity of $\psi$,
\begin{equation*}
\left|f(t)-h(t)\right|^2=e(t)\leq K_\alpha\int_0^t e(s)ds+K_\alpha\delta(m)\, ,
\end{equation*}
with $e(0)=K_\alpha\delta(m)$. Hence, by Gr\"onwall's lemma, $e(t)\leq K_\alpha\delta(m)^2e^{K_\alpha t}$ and consequently 
\begin{equation*}
\left\|F(g)-F^m(g)\right\|_\infty\leq\sqrt{K_\alpha}\delta(m)e^{\frac{K_\alpha}{2}}\,,
\end{equation*}
which establishes \eqref{marina} and completes the proof.
\end{proof}

\subsubsection[Exponential equivalence]{$\left(Y^X_{\varepsilon}\right)_{\varepsilon>0}$ and $\left(X^X_{\varepsilon}\right)_{\varepsilon>0}$ are exponentially equivalent}

In order to show that $\left(X^X_{\varepsilon}\right)_{\varepsilon>0}$ satisfies a Large Deviations Principle for the uniform norm with the good rate function $I^x$, it now is sufficient to prove that the two families of processes $\left(X^X_{\varepsilon}\right)_{\varepsilon>0}$ and $\left(Y^X_{\varepsilon}\right)_{\varepsilon>0}$ are exponentially equivalent. 
\begin{proposition}
For any $\delta>0$, we have
\begin{equation*}
\limsup_{\varepsilon\to0}\varepsilon\log\left(\mathbb{P}\left[\sup_{0\leq t\leq 1}\left|X^X_\varepsilon(t)-Y^X_\varepsilon(t)\right|\geq\delta\right]\right)=-\infty\,.
\end{equation*}
\end{proposition}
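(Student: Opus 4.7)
The plan is to follow closely the strategy of Lemma \ref{scooter}. Set $z_t := X^x_\varepsilon(t) - Y^x_\varepsilon(t)$, fix $\delta, R > 0$ and introduce the localizing stopping time
\[
\tau_R := 1 \wedge \inf\{t \geq 0 : |X^x_\varepsilon(t)| \vee |Y^x_\varepsilon(t)| \geq R+1\}.
\]
Subtraction of the two dynamics yields a linear-in-$z$ SDE for which Lemma \ref{eurodance} is the natural tool; the only ingredient not present in the classical case is the deterministic drift/diffusion defect involving $W^{(2)}(\cL^{X^x_\varepsilon}_s, \delta_{\psi(s)})$. By Lemma \ref{lemma:WassersteinAgainstDirac} this is bounded by $\bE[|X^x_\varepsilon(s)-\psi(s)|^2]^{1/2}$, so a preparatory $L^2$-estimate is needed before anything probabilistic can be done.

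My first step would therefore be to show
\[
\rho(\varepsilon) := \sup_{s\in[0,1]} \bE[|X^x_\varepsilon(s)-\psi(s)|^2] \xrightarrow[\varepsilon\to 0]{} 0,
\]
by a direct It\^o--monotonicity--Gr\"onwall argument: apply It\^o to $|X^x_\varepsilon-\psi|^2$, use monotonicity of $b(s,\cdot,\mu)$ in the spatial variable and the Lipschitz dependence of $b$ in the measure (re-closing via Lemma \ref{lemma:WassersteinAgainstDirac}), the boundedness of $\sigma$ (producing the $\sqrt\varepsilon$ factor on the martingale term), together with the uniform convergence $b_\varepsilon \to b$, $\sigma_\varepsilon \to \sigma$. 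No appeal to the LDP is used here, so there is no circularity.

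The second step applies Lemma \ref{eurodance}. On $[0,\tau_R]$ the drift difference splits as
\[
b_\varepsilon(s, X^x_\varepsilon, \cL^{X^x_\varepsilon}_s) - b(s, Y^x_\varepsilon, \delta_{\psi(s)}) = [b_\varepsilon - b] + [b(s, X^x_\varepsilon, \cdot) - b(s, Y^x_\varepsilon, \cdot)] + [b(s, Y^x_\varepsilon, \cL) - b(s, Y^x_\varepsilon, \delta_{\psi})],
\]
bounded respectively by $\|b_\varepsilon - b\|_\infty$, $L_R|z_s|$ (local Lipschitzianity of $b$ on the ball of radius $R+1$), and $L\sqrt{\rho(\varepsilon)}$; the same split for $\sigma$ uses only the global Lipschitz constant. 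Setting $\eta(\varepsilon) := \|b_\varepsilon - b\|_\infty + \|\sigma_\varepsilon - \sigma\|_\infty + \sqrt{\rho(\varepsilon)} \to 0$, the coefficients driving $z$ fit the template $(\eta(\varepsilon)^2 + |z_t|^2)^{1/2}$ needed by Lemma \ref{eurodance}, producing
\[
\varepsilon \log \bP\Big[\sup_{t\in[0,\tau_R]}|z_t|\geq \delta\Big] \leq K_R + \log\frac{\eta(\varepsilon)^2}{\eta(\varepsilon)^2 + \delta^2} \xrightarrow[\varepsilon\to 0]{} -\infty.
\]

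To close the decomposition $\{\|z\|_\infty \geq \delta\} \subset \{\tau_R < 1\} \cup \{\sup_{[0,\tau_R]}|z_t| \geq \delta\}$ it remains to show that $\limsup_{\varepsilon\to 0}\varepsilon \log \bP[\tau_R < 1] \to -\infty$ as $R \to \infty$. The exit probability for $Y^x_\varepsilon$ follows from the LDP of Proposition \ref{manue}: the infimum of $I^x$ over paths leaving the ball of radius $R$ tends to $+\infty$ with $R$, exactly via the Gr\"onwall argument at the end of Lemma \ref{scooter}. The exit probability for $X^x_\varepsilon$ is then transferred by combining the $L^2$-proximity of step one with the already-established step-two bound on a slightly larger ball (choosing $R$ first and $\varepsilon$ last to avoid circularity). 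The main obstacle is step one: unlike the classical case, the Wasserstein defect $W^{(2)}(\cL^{X^x_\varepsilon}_s, \delta_{\psi(s)})$ is a deterministic quantity that does not vanish pathwise and so must be controlled in mean \emph{before} Lemma \ref{eurodance} can be invoked; this is exactly where the monotonicity of $b$ and boundedness of $\sigma$ become essential.
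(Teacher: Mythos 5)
Your proposal is correct and follows essentially the same route as the paper: localization by the exit time of the ball of radius $R+1$, the preliminary estimate $\sup_{t\in[0,1]}\bE\big[|X^x_\varepsilon(t)-\psi(t)|^2\big]\to 0$ (which the paper delegates to a Lemma of Herrmann--Imkeller--Peithmann), Lemma \ref{eurodance} on the stopped interval, and then the transfer of an exit of $X^x_\varepsilon$ into an exit of $Y^x_\varepsilon$, whose probability is controlled by the LDP of Proposition \ref{manue} together with the growth-of-the-rate-function argument at the end of the proof of Lemma \ref{scooter}. The only cosmetic difference is that you carry the uniform defects $\|b_\varepsilon-b\|_\infty$ and $\|\sigma_\varepsilon-\sigma\|_\infty$ explicitly in the coefficient bounds, which the paper's displayed formulas suppress.
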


\begin{proof}
The proof is similar to the one of Proposition \ref{scooter} and is also inspired by the proof of \cite[Theorem 3.4]{HerrmannImkellerPeithmann2008}.

Without loss of generality, we may choose $R>0$ such that $x$ is in the ball of center $0$ and radius $R+1$. We also assume that $\psi(t)$ does not leave this ball up to time $2$. By $\sigma'_R$, we denote the first time at which $X^X_{\varepsilon}$ or $Y^X_{\varepsilon}$ exits from the ball, then we put $\sigma_R:=\min\left\{1,\sigma'_R\right\}$. We consider $z_t:=X^X_\varepsilon(t)-Y^X_\varepsilon(t)$. This new process satisfies the following equation:
\begin{equation*}
z_t=\int_0^tb_sds+{\sqrt{\varepsilon}}\int_0^t\sigma_sdW (s)\,,
\end{equation*}
with  
\begin{align*}
b_t&:=b\left(t,X^X_\varepsilon(t),\cL_t^{X^X_{\varepsilon}}\right)-b\left(t,Y^X_\varepsilon(t),\delta_{\psi(t)}\right),
\\
\sigma_t&:=\sigma\left(t,X^X_\varepsilon(t),\cL_t^{X^X_{\varepsilon}}\right)-\sigma\left(t,Y^X_\varepsilon(t),\delta_{\psi(t)}\right)\,.
\end{align*}
Both $b_t$ and $\sigma_t$ are progressively measurable processes. We now assume that $t\leq\sigma_R$. Then, $b$ and $\sigma$ are Lipschitz in the spatial variable:
\begin{align*}
\left|b_t\right|&=\left|b\left(t,X^X_\varepsilon(t),\cL_t^{X^X_{\varepsilon}}\right)-b\left(t,Y^X_\varepsilon(t),\delta_{\psi(t)}\right)\right|\\
&\leq\left|b\left(t,X^X_\varepsilon(t),\cL_t^{X^X_{\varepsilon}}\right)-b\left(t,X^X_\varepsilon(t),\delta_{\psi(t)}\right)\right|
\\ & \qquad \qquad +\left|b\left(t,X^X_\varepsilon(t),\delta_{\psi(t)}\right)-b\left(t,Y^X_\varepsilon(t),\delta_{\psi(t)}\right)\right|\\
&\leq L\sqrt{\mathbb{E}\left[\left|X^X_\varepsilon(t)-\psi(t)\right|^2\right]}
+L_R\left|X^X_\varepsilon(t)-Y^X_\varepsilon(t)\right|\\
&\leq B_R\sqrt{\rho(\varepsilon)^2+\left|z_t^2\right|}\,,
\end{align*}
where $B_R$ depends only on $R$ and $\rho(\varepsilon):=\sup_{t\in[0,T]}\mathbb{E}\left\{\left|X^X_\varepsilon(t)-\psi(t)\right|^2\right\}$ goes to $0$ as $\varepsilon$ goes to $0$. Indeed, we can proceed as in \cite[Lemma 3.1]{HerrmannImkellerPeithmann2008} to show that $\rho(\varepsilon)$ is small as $\varepsilon$ goes to 0.

In the same vein, where $M$ is a constant which does not depend on $R$, we have
\begin{equation*}
\left|\sigma_t\right|\leq M\sqrt{\rho(\varepsilon)^2+\left|z_t^2\right|}.
\end{equation*}
Thus, Lemma \ref{eurodance} is applicable and for any $\delta,\rho>0$ and for any $\varepsilon$ small enough, we have
\begin{equation*}
\varepsilon\log\left(\mathbb{P}\left[\sup_{0\leq t\leq\sigma_R}|z_t|\geq\delta\right]\right)\leq B_R+M^2\left(1+\frac{d}{2}\right)+\frac{1}{2}\log\left(\frac{\rho^2}{\rho^2+\delta^2}\right)\,.
\end{equation*}
As $\rho(\varepsilon)$ converges to $0$ as $\varepsilon\searrow0$, we deduce that
\begin{equation*}
\limsup_{\varepsilon\to0}\varepsilon\log\left(\mathbb{P}\left[\sup_{0\leq t\leq\sigma_R}|z_t|\geq\delta\right]\right)=-\infty\,.
\end{equation*}
Now, since
\begin{equation*}
\big\{\left\|X^X_{\varepsilon}-Y^X_{\varepsilon}\right\|_\infty\geq\delta\big\}
\subset
\big\{\sigma_R<1\big\}
\bigcup
\Big\{\sup_{0\leq t\leq\sigma_R}\left|X^X_\varepsilon(t)-Y^X_\varepsilon(t)\right|\geq\delta\Big\}\,,
\end{equation*}
we can conclude as soon as we show that
\begin{equation*}
\lim_{R\to+\infty}\limsup_{\varepsilon\to0}\varepsilon\log\left(\mathbb{P}\left[\sigma_R<1\right]\right)=-\infty\,.
\end{equation*}
By $\tau_R$, we denote the first time that $Y_\varepsilon$ exits from the ball of center $0$ and radius $R$. If, $Y^X_{\varepsilon}(\sigma_R)$ is not in the ball of center $0$ and radius $R+1$, then, we have immediately $\tau_R<1$. Conversely, if $X^X_{\varepsilon}(\sigma_R)$ is not in the ball of center $0$ and radius $R+1$, by taking $\delta<\frac{1}{2}$, we know that with high probability $Y^X_{\varepsilon}(\sigma_R)$ is not in the ball of center $0$ and radius $R$, which means again $\tau_R<1$.

However, $Y^X_{\varepsilon}$ satisfies a Large Deviations Principle for the uniform norm with the good rate function $I^x$. As a consequence,
\begin{align*}
\limsup_{\varepsilon\to0}\varepsilon\log\left(\mathbb{P}\left[\sigma_R<1\right]\right)
&
=\limsup_{\varepsilon\to0}\varepsilon\log\left(\mathbb{P}\left[\tau_R<1\right]\right)
\\
&
\leq-\inf_{\{g\in H^{\otimes d'}\,:\,f(t)=F(g), \left|\left|f\right|\right|_\infty\geq R\}}\frac{1}{2}\int_0^1\left|\dot g\right|^2dt\,.
\end{align*}
It is not difficult to see that the latter expression approaches $-\infty$ as $R$ goes to $\infty$ by using the same arguments as those used from the end of the proof of Lemma~\ref{scooter}.

\end{proof}
Theorem \ref{samsung} now follows easily.
\begin{proof}[Proof of Theorem \ref{samsung}]
Following the methodology described in Section \ref{sec:methodologyForSupSandra} and the above results in combination with \cite[Theorem 2.21]{herrmann2013stochastic} our main Theorem \ref{samsung} follows.
\end{proof}

\subsection[Large Deviations Principle in H\"older Topologies]{Large Deviations Principle in the H\"older Topology for $X_{\varepsilon}$}

\subsubsection{The main result}

Recall the Stochastic process \eqref{sandra1}. We introduce the so-called Skeleton operator $\Phi$ for the MV-SDE \eqref{sandra1} on the Cameron Martin Space $H$, in other words, $\Phi: H^{\otimes d'} \to C([0,1])$ 
\begin{align}
\label{eq:SkeletonOfSandra}
\nonumber
\Phi^x(h)(t) 
= x 
  &+ \int_0^t b(s, \Phi^x(h)(s), \delta_{\Phi^x(0)(s)}) ds\\
	&
  + \int_0^t \sigma(s, \Phi^x(h)(s), \delta_{\Phi^x(0)(s)}) \dot h(s) ds.
\end{align}
The operator $\Phi$ for each $h\in H$ outputs the unique solution to the above ODE. For existence and uniqueness of a solution, see Footnote \ref{ExistenceUniquenessFootnote}. 

Following the same method as in Lemma \ref{BloodyODE} and using the H\"older inequality, one can see that
\begin{align*}
|\Phi^x(h)(t) &- \Phi^x(h)(s)| 
\\
&\leq
\textrm{O}\Big(|t-s|\Big) + M|t-s|^{\tfrac{1}{2}} \sqrt{\int_0^T |\dot{h}(r)|^2 dr}
\leq 
\textrm{O}\Big( |t-s|^{\tfrac{1}{2}}\Big), 
\end{align*}
so $\Phi(h) \in C^{\frac 12}([0,T])$.  We are now able to state the two main results of this section:
\begin{theorem}
\label{theorem:LDPResult}
Let $\alpha\in(0,1/2)$. Let $A$ be a Borel set of the space of $\bR^d$-valued continuous paths over $[0,1]$ in the H\"older topology of $C^\alpha([0,1])$. Let $\Delta(A):=\inf\big\{ {\|\dot h\|_2^2}/{2} ; h\in H^{\otimes d'}, \Phi^x(h)(\cdot)\in A\big\}$. Then
\begin{equation*}
-\Delta(\mathring{A}) 
\leq 
\liminf_{\varepsilon\to0} \varepsilon \log \bP[X_{\varepsilon}^x\in A] 
\leq 
\limsup_{\varepsilon\to0} \varepsilon \log \bP[X_{\varepsilon}^x\in A] 
\leq 
-\Delta(\bar{A}),
\end{equation*}
where $\mathring{A}$ and $\bar{A}$ are the interior and closure of the set $A$ with respect to the topology generated by the H\"older norm. 
\end{theorem}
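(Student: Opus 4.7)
The plan is to lift the supremum-norm LDP of Theorem \ref{samsung} to the H\"older topology $C^\alpha$ following the strategy of \cite{arous1994grandes} outlined around \eqref{eq:Section1Skull}. Since the Skeleton map $\Phi^x$ defined in \eqref{eq:SkeletonOfSandra} depends only on $b, \sigma$ and not on the choice of path topology, the candidate rate function is automatically $I^x(f) = \Delta(\{f\})$, so the work lies entirely in upgrading the probabilistic bounds from balls in $\|\cdot\|_\infty$ to balls in $\|\cdot\|_\alpha$.

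The crucial ingredient is the \emph{skull estimate}: for every $h \in H^{\otimes d'}$ and every $R, \rho > 0$, there exists $\delta > 0$ such that for $\varepsilon$ sufficiently small,
\begin{equation*}
\bP\Big[\|X_\varepsilon^x - \Phi^x(h)\|_\alpha \geq \rho,\ \|{\sqrt{\varepsilon}} W - h\|_\infty \leq \delta\Big] \lesssim \exp\Big(-\frac{R}{\varepsilon}\Big).
\end{equation*}
To establish this I would set $Z_\varepsilon := X_\varepsilon^x - \Phi^x(h)$, write it as the sum of a drift term and a stochastic integral driven by ${\sqrt{\varepsilon}} W - h$, and control $\|Z_\varepsilon\|_\alpha$ by direct estimation of $|Z_\varepsilon(t) - Z_\varepsilon(s)|/|t-s|^\alpha$. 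An integration-by-parts of the $({\sqrt{\varepsilon}} W - h)$-driven piece turns it into an expression controlled by $\|{\sqrt{\varepsilon}} W - h\|_\infty$ plus a Lipschitz-in-$Z_\varepsilon$ contribution, so a Gr\"onwall step closes the loop on the event $\{\|{\sqrt{\varepsilon}} W - h\|_\infty \leq \delta\}$. The new MV-SDE ingredient is the Wasserstein term $W^{(2)}(\cL_r^{X_\varepsilon^x}, \delta_{\Phi^x(0)(r)})$ entering $b$ and $\sigma$; by Lemma \ref{lemma:WassersteinAgainstDirac} this is bounded by $\bE[|X_\varepsilon^x(r) - \Phi^x(0)(r)|^2]^{1/2}$, which is exponentially small in $\varepsilon$ thanks to the sup-norm LDP of Theorem \ref{samsung}.

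Given the skull estimate, the upper bound in $C^\alpha$ proceeds by a compactness cover. For any closed $F \subset C^\alpha$ with $\Delta(F) < \infty$ and any $R < \Delta(F)$, the sub-level set $\{h \in H^{\otimes d'}:\ \|\dot h\|_2^2/2 \leq R\}$ is compact in $H^{\otimes d'}$, its $\Phi^x$-image is compact in $C^\alpha$ and at positive H\"older-distance $\rho$ from $F$; covering this image by finitely many H\"older balls around skeletons $\Phi^x(h_i)$ and combining the skull estimate with Schilder's theorem for ${\sqrt{\varepsilon}} W$ gives $\limsup_{\varepsilon \to 0} \varepsilon \log \bP[X_\varepsilon^x \in F] \leq -R$, and $R \uparrow \Delta(F)$ concludes. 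For the lower bound, given an open $A \subset C^\alpha$ and $h \in H^{\otimes d'}$ with $\Phi^x(h) \in A$, pick $\rho > 0$ with $B_\alpha(\Phi^x(h), \rho) \subset A$ and use
\begin{equation*}
\bP[X_\varepsilon^x \in A] \geq \bP\big[\|{\sqrt{\varepsilon}} W - h\|_\infty \leq \delta\big] - \bP\Big[\|X_\varepsilon^x - \Phi^x(h)\|_\alpha \geq \rho,\ \|{\sqrt{\varepsilon}} W - h\|_\infty \leq \delta\Big],
\end{equation*}
invoking the skull estimate on the second term and Schilder's lower bound on the first.

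The main obstacle is the skull estimate. In the classical bounded-Lipschitz setting of \cite{arous1994grandes} both the drift bound and the Burkholder-Davis-Gundy estimate on the H\"older oscillation are immediate. Here one must deal simultaneously with the polynomial growth of $b$, which forces localization through stopping times as in the proofs of Theorem \ref{samsung} and Lemma \ref{scooter}, and with the Wasserstein-in-the-coefficients contribution, which must be controlled without circularity; the latter is handled by borrowing the exponential concentration of $\cL_r^{X_\varepsilon^x}$ near $\delta_{\Phi^x(0)(r)}$ already implicit in Theorem \ref{samsung} and passing it into the H\"older-scale estimate via Lemma \ref{lemma:WassersteinAgainstDirac}.
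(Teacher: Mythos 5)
Your reduction of the theorem to the ``skull estimate'' (Proposition \ref{HolderProp}) is exactly the paper's route: cover the Cameron--Martin sub-level set $\{\|\dot h\|_2^2/2\le r\}$ (compact by Arzel\`a--Ascoli) by finitely many $\|\cdot\|_\infty$-balls, use Schilder's theorem for $\{{\sqrt{\varepsilon}}W\notin U\}$ and the skull estimate on each ball for the upper bound, and for the lower bound put a H\"older ball $B_\alpha(\Phi^x(h),\rho)\subset A$ and subtract the skull estimate from Schilder's lower bound. That part is sound and matches the paper's proof of Theorem \ref{theorem:LDPResult}.

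The genuine gap is in your sketch of the skull estimate itself, which you correctly identify as the main obstacle. Your plan is pathwise: integrate by parts the $({\sqrt{\varepsilon}}W-h)$-driven integral and close with Gr\"onwall on the event $\{\|{\sqrt{\varepsilon}}W-h\|_\infty\le\delta\}$. This cannot work: the $\alpha$-H\"older norm of the martingale part is not controlled pathwise by the sup norm of the driver (a path can be uniformly tiny with arbitrarily large $\|\cdot\|_\alpha$), the boundary term of the integration by parts reintroduces $\|{\sqrt{\varepsilon}}W-h\|_\alpha$, which the conditioning does not bound, and $d[\sigma(X_\varepsilon)]$ requires a differentiability of $\sigma$ that is not assumed. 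The exponential smallness in Proposition \ref{HolderProp} is intrinsically probabilistic: the paper freezes the integrand via the step process $X_\varepsilon^{x,l}$, so the stochastic integral becomes a finite sum of Brownian increments with bounded coefficients, and then invokes Lemma \ref{Lemme1} (probability that $\|W\|_\alpha$ is large while $\|W\|_\infty$ is small) and Lemma \ref{Lemme2}, together with a Chernoff/exponential-martingale bound for the discretization error; no purely deterministic Gr\"onwall argument can replace this. A secondary error: the Wasserstein term $\bE[|X_\varepsilon^x(r)-\Phi^x(0)(r)|^2]^{1/2}$ is not ``exponentially small thanks to the sup-norm LDP'' --- the LDP controls log-asymptotics of probabilities, not $L^2$ moments, and typical fluctuations of order ${\sqrt{\varepsilon}}$ make exponential smallness false; the paper obtains the (sufficient) polynomial bound of order $\varepsilon^{1/4}\vee\eta_\varepsilon^{1/4}$ by a direct It\^o/Gr\"onwall moment estimate. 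Finally, for $h\neq 0$ the paper passes through a Girsanov change of measure, keeping the law in the coefficients under the original measure (compared to $\delta_{\Phi^x(0)}$, as you do note); your sketch should make explicit how this case is reduced to $h=0$.
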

In order to prove the Theorem \ref{theorem:LDPResult} we first prove another LDP type result (compare with \eqref{eq:Section1Skull}).
\begin{proposition} 
\label{HolderProp}
Let $h\in H^{\otimes d'}$. Take $\forall R, \rho>0$, $\exists \delta, \nu>0$ such that $\forall 0<\varepsilon<\nu$,
$$
\mathbb{P} \Big[ \|X_{\varepsilon}^x - \Phi^x(h)\|_\alpha \geq \rho, \|{\sqrt{\varepsilon}} W-h\|_\infty\leq \delta \Big] 
\lesssim 
\exp\Big( -\frac R \varepsilon \Big).
$$
\end{proposition}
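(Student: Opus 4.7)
The plan is to adapt the strategy of \cite{arous1994grandes} to the MV-SDE setting. First I would localize the problem by introducing the stopping time $\tau_\eta := \inf\{t \in [0,1] : |X_\varepsilon^x(t) - \Phi^x(h)(t)| > \eta\}$ for some small $\eta>0$ to be chosen and splitting, writing $\xi_\varepsilon := X_\varepsilon^x - \Phi^x(h)$,
\begin{align*}
\mathbb{P}\big[\|\xi_\varepsilon\|_\alpha \geq \rho,\ \|\sqrt{\varepsilon}W - h\|_\infty \leq \delta\big]
&\leq \mathbb{P}\big[\tau_\eta < 1,\ \|\sqrt{\varepsilon}W - h\|_\infty \leq \delta\big] \\
&\quad + \mathbb{P}\big[\|\xi_\varepsilon\|_\alpha \geq \rho,\ \tau_\eta = 1\big].
\end{align*}
The first probability is $\leq \exp(-R/\varepsilon)$ for $\delta$ small and $\varepsilon$ small by Theorem~\ref{samsung} combined with the exponential-equivalence machinery underlying Proposition~\ref{manue}: the sup-norm LDP together with the continuity of the Skeleton operator $\Phi^x$ force $X_\varepsilon^x$ to stay in a sup-norm tube around $\Phi^x(h)$ with overwhelming probability whenever $\sqrt\varepsilon W$ stays in a sup-norm tube around $h$.

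On the event $\{\tau_\eta = 1\}$ both $X_\varepsilon^x$ and $\Phi^x(h)$ remain in a fixed ball, so the local Lipschitz constant of $b$ from Assumption~\ref{ass:MKSDE-MainExistTheo} can be taken uniform. I would then decompose the H\"older increment
\[
\xi_\varepsilon(t) - \xi_\varepsilon(s) = I_1(s,t) + I_2(s,t) + I_3(s,t),
\]
where $I_1$ is the drift-difference integral, $I_2$ is the $\sigma$-difference integral against $\dot h$, and
\[
I_3(s,t) := \sqrt{\varepsilon}\int_s^t \sigma_\varepsilon(r)\, dW(r) - \int_s^t \sigma_\varepsilon(r)\, \dot h(r)\, dr,
\]
with $\sigma_\varepsilon(r) := \sigma(r, X_\varepsilon^x(r), \mathcal{L}^{X_\varepsilon^x}_r)$. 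Invoking the local Lipschitz of $b$, the global Lipschitz of $\sigma$, Lemma~\ref{lemma:WassersteinAgainstDirac} and Cauchy--Schwarz on the $\dot h$ factor, both $I_1$ and $I_2$ admit pathwise bounds of the form $C_{R,\eta}(\eta + \varrho_2(\varepsilon))|t-s|^\alpha$, where $\varrho_2(\varepsilon) := \sup_r W^{(2)}(\mathcal{L}^{X_\varepsilon^x}_r, \delta_{\Phi^x(0)(r)})$ vanishes as $\varepsilon \to 0$ (this is exactly the quantity controlled in the sup-norm exponential-equivalence arguments).

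The hard part will be the H\"older-$\alpha$ estimate of the stochastic term $I_3$. Setting $\phi := \sqrt\varepsilon W - h$ (so $\|\phi\|_\infty \leq \delta$ on the event of interest) and splitting
\[
I_3(s,t) = \sigma_\varepsilon(s)\big[\phi(t) - \phi(s)\big] + \int_s^t \big[\sigma_\varepsilon(r) - \sigma_\varepsilon(s)\big]\, d\phi(r),
\]
I would handle the boundary term via the interpolation inequality $\|\phi\|_\alpha \leq C\|\phi\|_\infty^{1-\alpha/\alpha'}\|\phi\|_{\alpha'}^{\alpha/\alpha'}$ for some $\alpha' \in (\alpha, 1/2)$, combined with the hypothesis $\|\phi\|_\infty \leq \delta$ and the Fernique-type Gaussian bound $\mathbb{P}[\|\sqrt\varepsilon W\|_{\alpha'} \geq K] \leq C\exp(-cK^2/\varepsilon)$; together these pin $\|\phi\|_\alpha$, and hence the boundary term, at a quantity that can be made arbitrarily small in $\delta$, at the cost of an $\exp(-R/\varepsilon)$-size bad event from choosing $K$ with $cK^2 \geq R$. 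For the correction term, $\sigma_\varepsilon(r) - \sigma_\varepsilon(s)$ is itself H\"older-small on the localized event (using the Lipschitz/time-H\"older structure of $\sigma$ and the continuity of $X_\varepsilon^x$), so an exponential martingale inequality on the stochastic part and Cauchy--Schwarz on the $\dot h$ part deliver H\"older-$\alpha$ bounds of order $\sqrt\varepsilon$ off events of probability $\lesssim \exp(-R/\varepsilon)$. Choosing $\eta,\delta$ small and $K$ large in the right order then forces $\|\xi_\varepsilon\|_\alpha < \rho$ off an event of exponentially small probability. The central obstacle is precisely this H\"older estimate for $I_3$: Young integration is unavailable since neither $\sigma_\varepsilon$ nor $\phi$ has sufficient individual time-regularity, so one must genuinely exploit both the sup-norm proximity $\|\phi\|_\infty \leq \delta$ and the Gaussian concentration of the Brownian H\"older seminorm.
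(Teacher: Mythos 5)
Your overall architecture (localize, decompose into $I_1,I_2,I_3$, exploit the conditioning $\|\sqrt{\varepsilon}W-h\|_\infty\le\delta$ through $\phi$) is the right one, and your boundary-term treatment of $\sigma_\varepsilon(s)[\phi(t)-\phi(s)]$ via the interpolation $\|\phi\|_\alpha\lesssim\|\phi\|_\infty^{1-\alpha/\alpha'}\|\phi\|_{\alpha'}^{\alpha/\alpha'}$ plus Fernique is a legitimate substitute for the paper's Lemma \ref{Lemme1}. The genuine gap is in the correction term $\sqrt{\varepsilon}\int_s^t[\sigma_\varepsilon(r)-\sigma_\varepsilon(s)]\,dW(r)$. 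First, a pointwise exponential martingale inequality controls the tail for a fixed pair $(s,t)$ (or fixed $s$, running $t$), but the $\alpha$-H\"older seminorm is a supremum over uncountably many such pairs; to get a norm bound you need a chaining/Garsia--Rodemich--Rumsey or Schauder-expansion argument -- this is exactly the content of Lemma \ref{Lemme2}, which you never invoke. Second, and more seriously, the claimed bound ``of order $\sqrt{\varepsilon}$ off events of probability $\lesssim\exp(-R/\varepsilon)$'' is quantitatively impossible: with a Gaussian tail $\exp(-u^2/(C\varepsilon\,\mathrm{Var}))$, forcing the exceptional probability below $\exp(-R/\varepsilon)$ for \emph{arbitrary} $R$ requires the threshold $u$ to be at least of order $\sqrt{R}$ times the size of the integrand, and $\sigma_\varepsilon(r)-\sigma_\varepsilon(s)$ is only small for $r$ near $s$ (its modulus is governed by the \emph{random} H\"older constant of $X_\varepsilon^x$, itself only controlled by constants growing like $\sqrt{R}$ on events of probability $1-e^{-R/\varepsilon}$); for $|t-s|$ of order one the integrand is of order $M$ and no smallness survives. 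At this point you have also discarded the conditioning $\|\phi\|_\infty\le\delta$, which is the only mechanism that lets the exponent be pushed above any prescribed $R$.

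This is precisely the obstacle the paper's proof is built around: instead of comparing $\sigma_\varepsilon(r)$ with $\sigma_\varepsilon(s)$, it compares with a piecewise-constant discretization $\sigma_\varepsilon(\tfrac{\lfloor rl\rfloor}{l},X_\varepsilon^{x,l}(r),\cL^{X_\varepsilon^x}_{\lfloor rl\rfloor/l})$, shows via a strong-error/Chernoff estimate (term \eqref{eq:MV2}) that the difference of integrands is \emph{uniformly} bounded by an arbitrarily small $\gamma$ except on an $e^{-R/\varepsilon}$ event, applies Lemma \ref{Lemme2} so the exponent $\rho^2/(C'L^2\gamma^2\varepsilon)$ beats $R/\varepsilon$ by choosing $\gamma$ (term \eqref{eq:MV1}), and handles the remaining piecewise-constant stochastic integral through the bound $2lM\|W\|_\alpha$ together with the conditioning and Lemma \ref{Lemme1} (term \eqref{eq:MV3}); the case $h\ne0$ is then reduced to $h=0$ by Girsanov, noting the law in the coefficients is unchanged. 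To repair your proof you must either adopt this discretization, or add an explicit threshold split in $|t-s|$ (large gaps via the sup-norm proximity $\eta$, small gaps via the variance gain $|t-s|^{1+2\gamma_0}$) \emph{together with} a genuine chaining argument, tracking that all constants grow with $R$ and choosing $\theta,\eta,\delta$ accordingly. Relatedly, your opening estimate $\bP[\tau_\eta<1,\ \|\sqrt{\varepsilon}W-h\|_\infty\le\delta]\le e^{-R/\varepsilon}$ is not a consequence of the marginal LDP of Theorem \ref{samsung}: it is itself a joint Freidlin--Wentzell-type estimate (the sup-norm analogue of the proposition), which the paper obtains inside Step 2 via Gr\"onwall and avoids needing in this conditional form by localizing only on the marginal event $\{\|X_\varepsilon^x\|_\infty\ge N\}$, which Theorem \ref{samsung} does cover.
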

Intuitively, Proposition \ref{HolderProp} (proof given below) quantifies the probability of a highly varying process (in $\|\cdot\|_\alpha$-norm) when the equation's input signal is small (in $\|\cdot\|_\infty$) (see \eqref{eq:Section1Skull}).

\subsubsection{LDP using a Decoupling Argument}
\label{Section:DecouplingArgument}

In this section, we discuss another method for proving the LDP results for $X$ the solution process to  \eqref{sandra1}, called a  decoupling argument. The main idea is to freeze the Law of $X$ in the original MV-SDE and understand the outcome as a standard SDE, with solution $\hat X$, where the coefficients $\hat{b}_\varepsilon(t, x):= b_{\varepsilon}(t, x, \cL_t^{X_{\varepsilon}^x})$ and $\hat{\sigma}_\varepsilon(t, x):= \sigma_{\varepsilon}(t, x, \cL_t^{X_{\varepsilon}^x})$ are just functions of time and space that have no measure dependency. Observe that by Proposition \ref{proposition:ContinuityOfLaw}, the time regularity will not be affected by the measure dependency since we assume $\beta$-H\"older continuity in time for $\beta<\tfrac{1}{2}$, see Assumption \ref{def:He-man+Skeletor}. 

One can prove that $\cL^{X_{\varepsilon}^x} \to \delta_{\psi^x}$ in distribution as ${\varepsilon} \searrow 0$ (where $\psi$ solves \eqref{ode}). The LDP in the uniform topology for the MV-SDE would now follow from a similar LDP under our core conditions for the SDE of $\hat X$. To the best of our knowledge we were unable to find LDP results in H\"older topologies for SDEs with coefficients which allow for time dependency or monotone growth in the spacial variables. Such LDPs do exist, e.g. \cite{arous1994grandes}, present the right LDP but under assumptions of uniform Lipschitzness and uniformly boundedness of $b$ and $\sigma$ plus no time dependency.

Hence the methods and results we present contribute to the MV-SDE literature, but also they are of general interest for the literature on classical SDEs.

\subsubsection{Proofs}
\begin{proof}[Proof of Proposition \ref{HolderProp}]
Let $t\in[0,1]$. Fix $R, \rho>0$. In order to progress with a Local Lipschitz condition, we first need to consider the function $\Phi^x(h)(\cdot)$ (recall \eqref{eq:SkeletonOfSandra}) for $h\in H$. This is a continuous solution of an ODE on the compact interval $[0,1]$. Therefore, it is bounded and we can say that $\exists N>0$ such that $\|\Phi(h)\|_\infty <N$. 

We condition on the event that the process $X_{\varepsilon}^x(\cdot)$ remains in the ball of radius $N$ and we see
\begin{align*}
& \mathbb{P} \Big[ \|X_{\varepsilon}^x - \Phi^x(h)\|_\alpha \geq \rho, \|{\sqrt{\varepsilon}} W-h\|_\infty\leq \delta \Big] 
\\
&\leq
\mathbb{P} \Big[ \|X_{\varepsilon}^x - \Phi^x(h)\|_\alpha \geq \rho, \|{\sqrt{\varepsilon}} W-h\|_\infty\leq \delta , \|X_{\varepsilon}^x\|_\infty<N\Big]
+\bP\Big[ \|X_{\varepsilon}^x\|_\infty\geq N \Big].
\end{align*}
We use that we have the LDP result for $X_{\varepsilon}^x$ in a supremum norm and choose $N$ large enough so that 
$$
\bP \Big[\|X_{\varepsilon}^x\|_\infty \geq N\Big] < \exp\Big( -\frac{R}{\varepsilon}\Big).
$$
We also introduce a step function approximation to discretize the process $X_{\varepsilon}^x$ in \eqref{sandra1} as, for $l\in\bN$ 
$$
X_{\varepsilon}^{x,l}(t) 
= X_{\varepsilon}^x\Big(\frac{j}{l}\Big) \mbox{ on the interval $t\in\Big(\frac{j}{l}, \frac{j+1}{l}\Big]$},
\quad \textrm{ with }X_{\varepsilon}^{x, l}(0) = x.
$$

\emph{Step 1. Analysis of the diffusion term for $h=0$.} Consider
\begin{align}
\label{eq:MV0} 
\mathbb{P}\Big[ \| {\sqrt{\varepsilon}} \int_0^\cdot &\sigma_{\varepsilon}(s, X_{\varepsilon}^x(s), \cL_s^{X_{\varepsilon}^x}) dW(s) \|_\alpha\geq \rho, \|{\sqrt{\varepsilon}} W\|_\infty\leq \delta , \|X_{\varepsilon}^x\|_\infty<N \Big]
\\
\leq &\mathbb{P}\Big[\|{\sqrt{\varepsilon}} \int_0^\cdot [\sigma_{\varepsilon}(s, X_{\varepsilon}^x(s), \cL_s^{X_{\varepsilon}^x})-\sigma_{\varepsilon}(\tfrac{\lfloor sl \rfloor}{l}, X_{\varepsilon}^{x, l}, \cL_{\tfrac{\lfloor sl\rfloor}{l}}^{X_{\varepsilon}^{x}})]dW(s)\|_\alpha \geq \frac{\rho}{2}, \nonumber
\\
& \label{eq:MV1} \hspace{40pt} \tfrac{1}{l^\beta} + \|X_{\varepsilon}^x-X_{\varepsilon}^{x,l}\|_\infty + \bE[ \|X_{\varepsilon}^x-X_{\varepsilon}^{x,l}\|_\infty^2]^{1/2} \leq \gamma \Big]
\\
&+ \label{eq:MV2} \mathbb{P}\Big[\tfrac{1}{l^\beta} +\|X_{\varepsilon}^x-X_{\varepsilon}^{x,l}\|_\infty + \bE[ \|X_{\varepsilon}^x-X_{\varepsilon}^{x,l}\|_\infty^2]^{1/2}> \gamma , \|X_{\varepsilon}^x\|_\infty<N \Big]
\\
&\label{eq:MV3} + \mathbb{P}\Big[\| {\sqrt{\varepsilon}} \int_0^\cdot \sigma_{\varepsilon}(\tfrac{\lfloor sl\rfloor}{l}, X_{\varepsilon}^{x,l}(s), \cL_{\tfrac{\lfloor sl\rfloor}{l}}^{X_{\varepsilon}^{x}})dW(s)\|_\alpha \geq \frac{\rho}{2}, \|{\sqrt{\varepsilon}} W\|_\infty\leq \delta\Big].
\end{align}
We analyze each term in the RHS separately. Firstly, consider the term \eqref{eq:MV1}. We denote 
$$
\eta_\varepsilon = \sup_{s, y, \mu} \Big\{\, |b(s, y, \mu) - b_{\varepsilon}(s, y, \mu)| \, , \, |\sigma(s,y, \mu) -\sigma_{\varepsilon}(s,y, \mu)| \Big\}.
$$
By uniform convergence of $b_{\varepsilon}$ to $b$ and $\sigma_{\varepsilon}$ to $\sigma$, we have that $\lim_{\varepsilon\to0} \eta_\varepsilon = 0$. We choose $\varepsilon$ small enough so that $\eta_\varepsilon \leq \frac{L \gamma}{4}$. Then
\begin{align*}
\eqref{eq:MV1}\leq 
&
\mathbb{P}\Big[ \Big\| \int_0^\cdot \frac{2[\sigma_{\varepsilon}(s, X_{\varepsilon}^x(s), \cL_s^{X_{\varepsilon}^x})-\sigma_{\varepsilon}(\tfrac{\lfloor sl\rfloor}{l}, X_{\varepsilon}^{x,l}(s), \cL_{\frac{\lfloor sl\rfloor}{l}}^{X_{\varepsilon}^{x}})]}{L\gamma} dW(s)\Big\|_\alpha \geq \frac{\rho}{{\sqrt{\varepsilon}} L \gamma}, 
\\
& \hspace{85pt} \frac{2 \|\sigma_{\varepsilon}(\cdot, X_{\varepsilon}^x(\cdot), \cL_{\cdot}^{X_{\varepsilon}^x})-\sigma_{\varepsilon}(\tfrac{\lfloor \cdot l \rfloor}{l}, X_{\varepsilon}^{x,l}(\cdot), \cL_{\cdot}^{X_{\varepsilon}^{x,l}})\|_\infty}{L\gamma} \leq 1\Big]
\\
&\leq C' \exp\Big( \frac{-\rho^2}{C'L^2 \gamma^2 \varepsilon}\Big),
\end{align*}
using Lemma \ref{Lemme2}. Thus choose $\gamma$ such that $\frac{\rho^2}{C'L^2 R} \geq \gamma^2$.

Secondly, consider the term \eqref{eq:MV2}. We take $\varepsilon$ small enough so that $\eta_\varepsilon<1$. Applying It\^o's formula to $|X_{\varepsilon}^x(t)|^2$ gives
\begin{align*}
|X_{\varepsilon}^x(t)|^2 = |x|^2 
&+ 2{\sqrt{\varepsilon}} \int_0^t \langle X_{\varepsilon}^x(s), \sigma_{\varepsilon}(s, X_{\varepsilon}^x(s), \cL_s^{X_{\varepsilon}^x})dW(s) \rangle 
\\
&
+ 2 \int_0^t \langle X_{\varepsilon}^x(s), b_{\varepsilon}(s, X_{\varepsilon}^x(s), \cL_s^{X_{\varepsilon}^x})\rangle ds
\\
&+ \varepsilon \int_0^t \mbox{Tr}\Big( \sigma_{\varepsilon}(s, X_{\varepsilon}^x(s), \cL_s^{X_{\varepsilon}^x}) \sigma_{\varepsilon}(s, X_{\varepsilon}^x(s), \cL_s^{X_{\varepsilon}^x})^{T} \Big) ds.
\end{align*}
Following the estimation methods used to prove Theorem \ref{theo:LocLipMcK-V.ExistUniq} we have 
\begin{align*}
\bE\Big[\, \|X_{\varepsilon}^x\|_\infty^2\Big] &\leq K \Big(|x|^2 +\bE\Big[ ||b(\cdot, 0, \delta_0)||_\infty^2 \Big] \Big) \exp\Big( K+ \bE\Big[ ||b(\cdot, 0, \delta_0)||_\infty^2 \Big]\Big)
<\infty.
\end{align*}
In the same way, we can additionally prove $\bE[\, \|X_{\varepsilon}^x\|_\infty^{2q}] <\infty$.
Let $j=\lfloor t l\rfloor$. We can rewrite our SDE, for $t\in[j/l, (j+1)/l]$, as
$$
X_{\varepsilon}^x(t) - X_{\varepsilon}^x(j/l) = 
{\sqrt{\varepsilon}}\int_{\frac{j}{l}}^t \sigma_{\varepsilon}(s, X_{\varepsilon}^x(s), \cL_s^{X_{\varepsilon}^x})dW(s) 
+ \int_{\frac{j}{l}}^t  b_{\varepsilon}(s, X_{\varepsilon}^x(s), \cL_s^{X_{\varepsilon}^x})ds.
$$
We evaluate the strong error term in the same way as above to see that
\begin{align*}
\sup_{t\in[0,1]} \Big|X_{\varepsilon}^x(t)-X_{\varepsilon}^{x,l}(t)\Big|^2
&
\leq 2 \sup_{t\in\big[\tfrac{j}{l}, \tfrac{j+1}{l}\big]} \Big| \int_{\frac{j}{l}}^t b_{\varepsilon}(s, X_{\varepsilon}^x(s), \cL_s^{X_{\varepsilon}^x})ds \Big|^2   
\\
&\qquad \quad
+ 2\sup_{t\in\big[\tfrac{j}{l}, \tfrac{j+1}{l}\big]} \Big|{\sqrt{\varepsilon}}\int_{\frac{j}{l}}^t \sigma_{\varepsilon}(s, X_{\varepsilon}^x(s), \cL_s^{X_{\varepsilon}^x})dW(s) \Big|^2.
\end{align*}
Taking expectations yields
\begin{align*}
&\bE[\|X_{\varepsilon}^x-X_{\varepsilon}^{x,l}\|_\infty^2] 
\\
&
\leq
2 \bE\Big[ \sup_{t\in\big[\tfrac{j}{l}, \tfrac{j+1}{l}\big]} \Big| \int_{\frac{j}{l}}^t b_{\varepsilon}(s, X_{\varepsilon}^x(s), \cL_s^{X_{\varepsilon}^x})ds \Big|^2\Big]
\\
& \qquad \qquad 
+ 2\bE\Big[\sup_{t\in\big[\tfrac{j}{l}, \tfrac{j+1}{l}\big]} \Big|{\sqrt{\varepsilon}}\int_{\frac{j}{l}}^t \sigma_{\varepsilon}(s, X_{\varepsilon}^x(s), \cL_s^{X_{\varepsilon}^x})dW(s) \Big|^2\Big]
\\
&
\leq \frac{1}{l^2} \Big( 4\eta_\varepsilon + 32L^2 \bE[ ||X_{\varepsilon}^x||_\infty^{2q}] + 8L^2 4\bE[||X_{\varepsilon}^x||_\infty^2] + 4 ||b(\cdot, 0, \delta_0)||_\infty^2 \Big)
+ \frac{8\varepsilon M^2}{l}
\\
&
\lesssim 
\frac{1}{l},
\end{align*}
and we write $\bE[\|X_{\varepsilon}^x-X_{\varepsilon}^{x,l}\|_\infty^2]^{1/2} \leq {K_1}/{\sqrt{l}}$.  In the same way we also have that
\begin{align*}
\|X_{\varepsilon}^x &- X_{\varepsilon}^{x, l}\|_{\infty} 
\\
&= \sup_{j=0,...,l-1}\sup_{t\in[\frac{j}{l}, \frac{j+1}{l}]} \Big| {\sqrt{\varepsilon}}\int_{\frac{j}{l}}^t  \sigma_{\varepsilon}(s, X_{\varepsilon}^x(s), \cL_s^{X_{\varepsilon}^x}) dW(r) \Big|
\\
&
\qquad \qquad 
+ \sup_{j=0,...,l-1}\sup_{t\in[\frac{j}{l}, \frac{j+1}{l}]} \Big| \int_{\frac{j}{l}}^t b_{\varepsilon}(s, X_{\varepsilon}^x(s), \cL_s^{X_{\varepsilon}^x})dr \Big|
\\
&\leq \sup_{j, t} \Big| {\sqrt{\varepsilon}}\int_{\frac{j}{l}}^t  \sigma_{\varepsilon}(s, X_{\varepsilon}^x(s), \cL_s^{X_{\varepsilon}^x})dW(s) \Big| 
\\
&\qquad
+\sup_{j, t} \int_{\frac{j}{l}}^t\Big[ \eta_\varepsilon + L(1+||X||_\infty^q) + L\bE[||X||_\infty^2]^{1/2} +\sup_{r\in[0,1]}
 \Big|b(r, 0, \delta_0)\Big| \Big] ds
\\
&\leq \sup_{j, t} \Big| {\sqrt{\varepsilon}}\int_{\frac{j}{l}}^t  \sigma_{\varepsilon}(s, X_{\varepsilon}^x(s), \cL_s^{X_{\varepsilon}^x})dW(s) \Big| + K_2 \frac{1+ \|X_{\varepsilon}^x\|_\infty^q}{l}.
\end{align*}
Hence we have for term \eqref{eq:MV2} that 
\begin{align*}
\mathbb{P}\Big[&\tfrac{1}{l^\beta} + \|X_{\varepsilon}^x-X_{\varepsilon}^{x,l}\|_\infty + \bE[ \|X_{\varepsilon}^x-X_{\varepsilon}^{x,l}\|_\infty^2]^{1/2}> \gamma , \|X_{\varepsilon}^x\|_\infty<N \Big]
\\
&\leq \mathbb{P}\Big[ \sup_{j, t} \Big| {\sqrt{\varepsilon}}\int_{\frac{j}{l}}^t  \sigma_{\varepsilon}(s, X_{\varepsilon}^x(s), \cL_s^{X_{\varepsilon}^x})dW(s) \Big| 
\\ & \hspace{2cm}  + K_2 \frac{1+ \|X_{\varepsilon}^x\|_\infty^q}{l} +\frac{K_1}{\sqrt{l}} + \frac{1}{l^\beta} >\gamma, 
\|X_{\varepsilon}^x\|_\infty<N \Big]
\\
&\leq \mathbb{P}\Big[ \sup_{j, t} \Big| {\sqrt{\varepsilon}}\int_{\frac{j}{l}}^t  \sigma_{\varepsilon}(s, X_{\varepsilon}^x(s), \cL_s^{X_{\varepsilon}^x})dW(s) \Big| > \gamma - \frac{K_3}{l^{\frac12 \wedge \beta}}\Big],
\end{align*}
where $K_3 = K_1+K_2(1+N^q)+1$. 

Therefore, using Chernoff's inequality
\begin{align*}
\eqref{eq:MV2}
&
\leq\mathbb{P}\Big[ \sup_{j, t} 
 \exp\Big(\lambda \sup_{j, t} \Big| {\sqrt{\varepsilon}}\int_{\frac{j}{l}}^t  \sigma_{\varepsilon}(s, X_{\varepsilon}^x(s), \cL_s^{X_{\varepsilon}^x})dW(s) \Big| \Big) 
\\
& \hspace{6cm}         > \exp\Big(\frac{\lambda}{l^{\frac12 \wedge \beta}} \big(\gamma l^{\frac12 \wedge \beta} - K_3\big)\Big)\Big]
\\
&
\leq \frac{\sup_{j, t}
\mathbb{E}\Big[  \exp\Big(\lambda \Big| {\sqrt{\varepsilon}}\int_{\frac{j}{l}}^t  \sigma_{\varepsilon}(s, X_{\varepsilon}^x(s), \cL_s^{X_{\varepsilon}^x})dW(s) \Big| \Big) \1_{\|X_{\varepsilon}^x\|_\infty<N} \Big]}
	{\exp\Big(\frac{\lambda}{l^{\frac12 \wedge \beta}} (\gamma l^{\frac12 \wedge \beta}- K_3)\Big)} 
\\
&
\lesssim \exp\Big(\lambda^2 \varepsilon \frac{M}{2l} - \frac{\lambda}{l^{\frac12 \wedge \beta}} (\gamma l^{\frac12 \wedge \beta}- K_3)\Big)
\\
&
\lesssim \exp\Big( \frac{-(\gamma l^{\frac12 \wedge \beta} - K_3)^2}{2 \varepsilon M} \frac{l}{l^{\frac12 \wedge \beta}} \Big),
\end{align*}
by optimizing over the arbitrary choice of $\lambda$. We can now choose the constant $l$ such that $\frac{(\gamma l^{\frac12 \wedge \beta} -K_3)^2}{2M}l^{1- (1 \wedge 2\beta)}>R$. 

Finally, to evaluate Equation \eqref{eq:MV3}, we first consider $\sigma_{\varepsilon}(X_{\varepsilon}^{x, l}(\cdot), \cL^{X_{\varepsilon}^{x}}_{\tfrac{\lfloor \cdot l\rfloor }{l}})$. This process is constant over the interval $(\frac{j}{l}, \frac{j+1}{l}]$. Then taking the H\"older norm we get 
\begin{align*}
&\Big\|\int_0^\cdot \sigma_{\varepsilon}(\tfrac{\lfloor ls \rfloor}{l}, X_{\varepsilon}^{x, l}(s), \cL^{X_{\varepsilon}^{x}}_{\tfrac{\lfloor sl\rfloor }{l}}) dW(s)\Big\|_\alpha 
\\
&\quad = \Big\| \sum_{j=0}^{l-1} \sigma_{\varepsilon}\big(\tfrac{j}{l}, X_{\varepsilon}^{x, l}(\tfrac{j}{l}), \cL^{X_{\varepsilon}^{x}}_{\tfrac{j}{l}}\big) \big[W(\frac{j+1}{l}\wedge \cdot) - W(\frac{j}{l}\wedge \cdot)\big] \Big\|_\alpha \leq 2lM \|W\|_\alpha,
\end{align*}
using $\|\sigma_{\varepsilon} \|_\infty\leq M$ and the triangle inequality. Then
\begin{align*}
\mathbb{P}\Big[
\|{\sqrt{\varepsilon}} \int_0^\cdot& \sigma_{\varepsilon}(s, X_{\varepsilon}^{x, l}(s), \cL_s^{X_{\varepsilon}^{x, l}}) dW(s)\|_{\alpha} \geq \frac{\rho}{2}, \|{\sqrt{\varepsilon}} W\|_\infty\leq \delta\Big]
\\
&
\leq \mathbb{P}\Big[ \|W\|_\alpha\geq \frac{\rho}{4{\sqrt{\varepsilon}} lM}, 
                         \|W\|_\infty \leq \frac{\delta}{{\sqrt{\varepsilon}}}\Big]
\\
&
\leq C \max \Big( 1, \Big(\frac{\rho}{4Ml\delta}\Big)^{1/\alpha} \Big) \exp\Big( \frac{-1}{\varepsilon} \frac{1}{C} \Big(\frac{\rho}{4Ml\delta}\Big)^{1/\alpha} \delta^2\Big),
\end{align*}
where we applied Lemma \ref{Lemme1} and chose $\delta$ such that $\frac{\rho}{R^{\alpha}4Ml C^{\alpha}}\geq \delta^{1-2\alpha}$. 

Injecting these three results in \eqref{eq:MV0} gives us that
\begin{align}
\nonumber
\label{eq:MV4} 
\mathbb{P}\Big[ \| {\sqrt{\varepsilon}} \int_0^\cdot \sigma_{\varepsilon} (s, X_{\varepsilon}^x(s), \cL^{X_{\varepsilon}^{x}}_{\tfrac{\lfloor sl\rfloor }{l}}) dW(s) \|_\alpha\geq \rho, 
&\|{\sqrt{\varepsilon}} W\|_\infty
 \leq \delta, \|X_{\varepsilon}^x\|_\infty<N \Big]
\\
& \lesssim \exp\Big( -\frac{R}{\varepsilon}\Big).
\end{align}

\emph{Step 2. The H\"older norm of the whole process when $h=0$.} We have 
\begin{align}
\|X_{\varepsilon}^x - \Phi^x(0)\|_{\alpha, t}  
\label{eq:MV5}
& \leq\big\|{\sqrt{\varepsilon}} \int_0^\cdot \sigma_{\varepsilon}(s, X_{\varepsilon}^x(s), \cL_s^{X_{\varepsilon}^x}) dW(s)\big\|_{\alpha, t} 
\\
& \label{eq:MV6}+ \big\|\int_0^\cdot [b_{\varepsilon}(s, X_{\varepsilon}^x(s), \cL_s^{X_{\varepsilon}^x}) - b(s, X_{\varepsilon}^x(s), \cL_s^{X_{\varepsilon}^x})]ds \big\|_{\alpha, t} 
\\
& \label{eq:MV7}+ \big\|\int_0^\cdot [b(s, X_{\varepsilon}^x(s), \cL_s^{X_{\varepsilon}^x}) - b(s, \Phi^x(0)(s), \delta_{\Phi^x(0)(s) })]ds\big\|_{\alpha, t}. 
\end{align}
Equation \eqref{eq:MV5} is the term in \eqref{eq:MV4} that we desire. Equation \eqref{eq:MV6} is bounded above by $\eta_\varepsilon$. We only consider the cases when $\|X_{\varepsilon}^x\|_\infty, \|\Phi^x(0)\|_\infty<N$ since we know that $\Phi^x(0)(t)$ remains in this ball and we conditioned on $X_{\varepsilon}^x(t)$ remaining in the same ball. This means that by the Locally Lipschitz condition, we can say that $b(t, x, \mu)$ is Lipschitz in the spacial variable with constant $L_N$. Therefore for \eqref{eq:MV7} we have
\begin{align}
\Big\|\int_0^\cdot b(s, X_{\varepsilon}^x(s)&, \cL_s^{X_{\varepsilon}^x})  - b(s, \Phi^x(0)(s), \delta_{\Phi^x(0)(s)})ds\Big\|_{\alpha, t} \nonumber
\\
\leq& \sup_{p,q\in[0,t]} \frac{\int_p^q |b(s, X_{\varepsilon}^x(s), \cL_s^{X_{\varepsilon}^x})-b(s, \Phi^x(0)(s), \delta_{\Phi^x(0)(s)})|}{|q-p|^\alpha} \nonumber
\\
\leq&\sup_{p,q\in[0,t]} \frac{L_N}{|q-p|^\alpha} \int_p^q |X_{\varepsilon}^x(s)-\Phi^x(0)(s)|ds \nonumber
\\
& \qquad \qquad + \frac{L}{|q-p|^\alpha} \int_p^q \bE[|X_{\varepsilon}^x(s)-\Phi^x(0)(s)|^2]^{1/2}ds \nonumber
\\
\label{eq:MV8}
\leq& L_N\|X_{\varepsilon}^x(\cdot) - \Phi^x(0)(\cdot)\|_{\infty, t} + L_N \int_0^{t} \|X_{\varepsilon}^x(\cdot)-\Phi^x(0)(\cdot)\|_{\alpha, s}ds 
\\
\label{eq:MV9}
&\qquad \qquad +L \bE[\|X_{\varepsilon}^x-\Phi^x(0)\|_\infty^2]^{1/2},
\end{align}
using Lemma \ref{lemma:WassersteinAgainstDirac}. 

Next, we want to show that the strong error $\bE[\|X_{\varepsilon}^x-\Phi(0)\|_\infty^2]$ can be controlled by $\varepsilon$. Using that
\begin{align*}
d\big(X_{\varepsilon}^x-\Phi^x(0)\big)(t) =& \sigma_{\varepsilon}\Big(t, X_{\varepsilon}^x(t), \cL_t^{X_{\varepsilon}^x}\Big)dW(t) 
\\
&+ \Big(b_{\varepsilon}(t, X_{\varepsilon}^x(t), \cL_t^{X_{\varepsilon}^x}) - b(t, X_{\varepsilon}^x(t),\cL_t^{X_{\varepsilon}^x})\Big)dt 
\\
&+ \Big(b(t, X_{\varepsilon}^x(t), \cL_t^{X_{\varepsilon}^x}) - b(t, \Phi^x(0)(t), \delta_{\Phi^x(0)(t)})\Big)dt,
\end{align*}
and It\^o's formula for $f(x) = |x|^2$ with $X_{\varepsilon}^x(0)-\Phi^x(0)(0)=0$ gives that
\begin{align*}
||X_{\varepsilon}^x &- \Phi^x(0) ||_{\infty, t}^2
\\
& \leq 2{\sqrt{\varepsilon}} \sup_{0\leq s\leq t} \Big| \int_0^s  \langle X_{\varepsilon}^x(r) - \Phi^x(0)(r), \sigma_{\varepsilon}(r, X_{\varepsilon}^x(r), \cL_r^{X_{\varepsilon}^x})dW(r) \rangle \Big| 
\\
&+ \varepsilon M^2 td  + \int_0^t 2\eta_\varepsilon |X_{\varepsilon}^x(r) - \Phi^x(0)(r)| dr 
\\
&+ 2\int_0^t \Big| \langle X_{\varepsilon}^x(r) - \Phi^x(0)(r), b(r, X_{\varepsilon}^x(r), \cL_r^{X_{\varepsilon}^x}) - b(r, \Phi^x(0)(r), \delta_{\Phi^x(0)(r)}) \rangle \Big| dr.
\end{align*}
Squaring and taking expectations gives
\begin{align*}
 & \bE\Big[ \|X_{\varepsilon}^x  - \Phi^x(0) \|_{\infty, t}^4\Big] 
\\
& \leq
 64pM^2 \varepsilon \int_0^t \Big( \bE[ \|X_{\varepsilon}^x - \Phi^x(0)\|_{\infty, r}^4 ]+1 \Big) dr + 4\varepsilon^2 M^4 t^2
\\
&\quad + 16t\eta_\varepsilon \int_0^t \Big( \bE[ \|X_{\varepsilon}^x - \Phi^x(0)\|_{\infty, r}^4 ]+1\Big) dr
+ 16t \int_0^t 4L^2 \bE[ \|X_{\varepsilon}^x - \Phi^x(0)\|_{\infty, r}^4] dr.
\end{align*}
Refining, we then obtain
$\bE\big[ \|X_{\varepsilon}^x-\Phi^x(0)\|_\infty^2\big]^{1/2} 
\leq 
K (\eta_\varepsilon^{1/4} \vee \varepsilon^{1/4} ) e^{K}.
$ 
We have shown that this expectation is of order $\varepsilon^{1/4}$. Now we consider $\|X_{\varepsilon}^x-\Phi^x(0)\|_\infty$. Since the supremum norm can be made to appear inside the integrals, we have
\begin{align*}
\|X_{\varepsilon}^x-\Phi^x(0)\|_{\infty, t} \leq&
\|{\sqrt{\varepsilon}} \int_0^\cdot \sigma_{\varepsilon}(s, X_{\varepsilon}^x(s), \cL_s^{X_{\varepsilon}^x}) dW(s)\|_{\infty, t} + \eta_\varepsilon t 
\\
&+ \int_0^t L_N \| X_{\varepsilon}^x - \Phi(0) \|_{\infty, r} dr + Lt \bE[ \|X_{\varepsilon}^x-\Phi^x(0)\|_{\infty}^2]^{1/2}, 
\end{align*}
and by using Gr\"onwall, we get
\begin{align}
\|X_{\varepsilon}^x-\Phi^x(0)\|_{\infty, t} \leq
&\Big( \|{\sqrt{\varepsilon}} \int_0^\cdot \sigma_{\varepsilon}(s, X_{\varepsilon}^x(s), \cL_s^{X_{\varepsilon}^x}) dW(s)\|_{\infty, t} \nonumber
\\
&\qquad \qquad + \big( \eta_\varepsilon + \bE[\|X_{\varepsilon}^x - \Phi^x(0)\|_\infty ^2]^{1/2}\big)\Big)e^{L_N t} \nonumber
\\
\label{eq:MV11}
\leq&\Big( \|{\sqrt{\varepsilon}} \int_0^\cdot \sigma_{\varepsilon}(s, X_{\varepsilon}^x(s), \cL_s^{X_{\varepsilon}^x}) dW(s)\|_{\alpha, t} + (\varepsilon^{1/4} \vee \eta_\varepsilon^{1/4})K' )\Big)e^{K't}.
\end{align}

Combining Equation \eqref{eq:MV8}, Equation \eqref{eq:MV9} and Equation \eqref{eq:MV11} gives
\begin{align*}
\|X_{\varepsilon}^x-\Phi^x(0)\|_\alpha 
\leq&  \| {\sqrt{\varepsilon}}\int_0^\cdot \sigma_{\varepsilon}(s, X_{\varepsilon}^x(s), \cL_s^{X_{\varepsilon}^x})dW(s)\|_\alpha \Big( 1+ L_N e^{K'}\Big) e^{L_N} 
\\
&+(\eta_\varepsilon^{1/4} \vee \varepsilon^{1/4} )\Big( 1+ L_NK'e^{K'} + KLe^K \Big)e^{L_N}
\\
\leq& \Big[\| {\sqrt{\varepsilon}}\int_0^\cdot \sigma_{\varepsilon}(s, X_{\varepsilon}^x(s), \cL_s^{X_{\varepsilon}^x})dW(s)\|_\alpha +(\eta_\varepsilon^{1/4} \vee \varepsilon^{1/4}) \Big] K_4.
\end{align*}
Thus for any choice of $\rho$ we see that
\begin{align*}
\mathbb{P} &\Big[ \|X_{\varepsilon}^x - \Phi^x(0)\|_\alpha \geq \rho , \|X_{\varepsilon}^x\|_\infty<N \Big]
\\
&\qquad 
\leq
\mathbb{P} \Big[ \Big(\| {\sqrt{\varepsilon}}\int_0^\cdot \sigma_{\varepsilon}(s, X_{\varepsilon}^x(s), \cL_s^{X_{\varepsilon}^x})dW(s)\|_\alpha +(\eta_\varepsilon^{1/4} \vee \varepsilon^{1/4}) \Big) K_4\geq \rho , \|X_{\varepsilon}^x\|_\infty<N \Big].
\end{align*}
and by choosing $\varepsilon$ small enough such that $(\eta_\varepsilon^{1/4} \vee \varepsilon^{1/4})<\frac{\rho}{2K_4}$ we get
\begin{align*}
\mathbb{P} \Big[& \|X_{\varepsilon}^x - \Phi^x(0)\|_\alpha \geq \rho, \|{\sqrt{\varepsilon}} W\|_\infty \leq \delta, \|X_{\varepsilon}^x\|_\infty<N \Big] 
\\
&\leq 
\mathbb{P} \Big[ \| {\sqrt{\varepsilon}}\int_0^\cdot \sigma_{\varepsilon}(s, X_{\varepsilon}^x(s), \cL_s^{X_{\varepsilon}^x})dW(s)\|_\alpha \geq \frac{\rho}{2K_4}, \|{\sqrt{\varepsilon}} W\|_\infty\leq \delta, \|X_{\varepsilon}^x\|_\infty<N \Big]
\\
&
\lesssim
\exp\Big(-\frac{R}{\varepsilon}\Big),
\end{align*}
since in Equation \eqref{eq:MV4} the choice of $\rho$ is arbitrary. 

\emph{Step 3. The case when $h\neq 0$.} For the final step, we use the same method as in \cite{arous1994grandes} to extend the results to Wiener processes with drift. Using a Girsanov transformation we have that there is a measure $\tilde{\bP}$ absolutely continuous to the standard probability measure $\bP$. 

Note that the law of the stochastic process is not changed by perturbing the path of the Brownian motion by some element of the Cameron Martin space. When solving a McKean-Vlasov equation (unlike classical SDEs), one has to fix the law of the probability space in order to define $\cL^X = \bP \circ X^{-1}$. Hence the law is not changed when one considers a different driving noise for the SDE. This is most obvious in expression \eqref{eq:SkeletonOfSandra} where the delta distribution follows the path of the Skeleton with input $h=0$. 

We rewrite the SDE and Skeleton process 
\begin{align*}
X_{\varepsilon}^x(t) =& x + \int_0^t b_{\varepsilon} (s, X_{\varepsilon}^x(s), \bP \circ [X_{\varepsilon}^x(s)]^{-1}) ds
+ \int_0^t \sigma_{\varepsilon}(s, X_{\varepsilon}^x(s), \bP \circ [X_{\varepsilon}^x(s)]^{-1}) \dot{h}(s) ds
\\
&+ {\sqrt{\varepsilon}} \int_0^t \sigma_{\varepsilon}(s, X_{\varepsilon}^x(s), \bP \circ [X_{\varepsilon}^x(s)]^{-1}) d\tilde{W}(s),
\\
\Phi^x(h)(t) =& x + \int_0^t b(s, \Phi^x(h)(s), \delta_{\Phi^x(0)(s)} ) ds + \int_0^t \sigma(s, \Phi^x(h)(s), \delta_{\Phi^x(0)(s)}) \dot{h}(s) ds,
\end{align*}
where $\tilde{W} = W - {h}/{{\sqrt{\varepsilon}}}$, $\tilde{\bP}$ is the measure where $\tilde{W}$ is a Brownian motion and $\bP \circ [X_{\varepsilon}^x(t)]^{-1} = \cL_t^{X_{\varepsilon}^x}$ . The drift term $b_{\varepsilon} + \sigma_{\varepsilon}\dot{h}$ satisfies the properties from before and matches the Skeleton process $\Phi^x(h)$. 

Also note that
$$
W^{(2)} \Big( \bP \circ [X_{\varepsilon}^x(t)]^{-1}, \delta_{\Phi^x(0)(t)} \Big) = \bE^{\bP}\Big[ |X_{\varepsilon}^x(t) - \Phi(0)(t)|^2 \Big]^{\tfrac{1}{2}},
$$
which we have already showed to go to 0 as $\varepsilon\to 0$. Thus we argue in the same way as in Step 2 and conclude
\begin{align*}
\mathbb{P} \Big[ \|X_{\varepsilon}^x &- \Phi^x(h)\|_\alpha \geq \rho, \|{\sqrt{\varepsilon}} W - h\|_\infty \leq \delta \Big] \\
&\lesssim \tilde{\mathbb{P}} \Big[ \|\tilde{X_{\varepsilon}^x} - \Phi^x(0)\|_\alpha \geq \rho, \|{\sqrt{\varepsilon}} \tilde{W} \|_\infty \leq \delta \Big] 
\lesssim \exp\Big( -\frac{R}{\varepsilon}\Big).
\end{align*} 

\end{proof}

We are now in position to prove our 2nd main result, Theorem \ref{theorem:LDPResult}.
\begin{proof}[Proof of Theorem \ref{theorem:LDPResult}]
\emph{Proving the upper bound.} 
First consider the case where $0\notin A$ and $A$ is closed in the H\"older Topology. Then there exists an $r$ such that $\Delta(A)>r>0$. Let us consider the ball in the Cameron-Martin space $H$
$$
\Big\{ h\in H^{\otimes d'} : h(t)=\int_0^t \dot h(s)ds, \frac{\|\dot h\|_2^2}{2}\leq r\Big\}.
$$
Recall that if $h\in H^{\otimes d'}$ then $h\in C^{\frac12}([0,1]; \bR^m)$ and is bounded and, moreover, that $\|h\|_\infty \leq \|h\|_{\frac12} \leq \|\dot h\|_{2}$. Therefore we can apply Arzel\`a-Ascoli Theorem \cite{Dunford1988Linear} to get that this set is compact. Hence we can find a finite open cover of this set and we can restrict the radius of the open balls. We write
$$
\Big\{ h\in H^{\otimes d'} : h(t)=\int_0^t \dot h(s)ds, \frac{\|\dot h\|_2^2}{2}\leq r\Big\} 
\subset 
\bigcup_{i=1}^N B_\infty(h_i, \delta_{h_i})=U.
$$
These balls are in the uniform topology and the elements $h_i$ are all have ${\|\dot h\|_2^2}/{2}<r$. By this property, $\Phi(h_i)\notin A$. If it were, ${\|\dot h\|_2^2}/{2}>\Delta(A)$. The set $A$ is closed in the H\"older topology so $A^c$ is open in the H\"older Topology. Therefore, there exists a $\rho_{h_i}$ such that in the H\"older Topology $B_\alpha(\Phi(h_i), \rho_{h_i})$ is in $A^c$, and therefore does not intersect with $A$. Hence when $X_{\varepsilon}^x\in A$, we can say that $\|X_{\varepsilon}^x - \Phi^x(h_i)\|_\alpha \geq \rho_{h_i}$. Finally, we can estimate  
\begin{align*}
\bP[ X_{\varepsilon}^x \in A]
&
= \bP[X_{\varepsilon}^x \in A, {\sqrt{\varepsilon}} W \notin U] + \bP[X_{\varepsilon}^x \in A, {\sqrt{\varepsilon}} W \in U]
\\
&
\leq \bP[{\sqrt{\varepsilon}} W \notin U] + \bP[X_{\varepsilon}^x \in A, {\sqrt{\varepsilon}} W \in U]
\\
&
\leq \bP[{\sqrt{\varepsilon}} W \notin U] + \sum_{i=1}^N \bP[\|X_{\varepsilon}^x-\Phi^x(h_i)\|_\alpha\geq \rho_{h_i}, \|{\sqrt{\varepsilon}} W- h_i\|_\infty 
\leq \delta_{h_i}]
\\
&
\leq\bP[{\sqrt{\varepsilon}} W \notin U] + N \exp\Big(-\frac{2r}{\varepsilon}\Big),
\end{align*}
where for the last line we apply Proposition \ref{HolderProp}, with $\delta_{h_i}$ and $\varepsilon$ chosen sufficiently small for the given $\rho_{h_i}$. The $\delta_{h_i}$ are dependent on our choice of open cover for the compact set, so we can make them as small as required. We already have a Large Deviation Principle for a Wiener process on the uniform norm by \cite{herrmann2013stochastic}. Hence we have for $\varepsilon$ sufficiently small that
$$
\bP[ {\sqrt{\varepsilon}} W\notin U] \leq \exp\Big( -\frac{\Delta (U^c)}{\varepsilon}\Big).
$$
If $h\notin U$, then we have that $\frac{\|\dot h\|_2^2}{2}>r$ and consequently
$\bP[ {\sqrt{\varepsilon}} W\notin U] \leq \exp\Big( -{2r}/{\varepsilon}\Big)$. 
Combining all of this together we get
$$
\limsup_{\varepsilon\to0} \varepsilon \log(\bP[X_{\varepsilon}^x\in A]) \leq -r,
$$
where $r$ was chosen arbitrarily so that $r<\Delta(A)$ where $A$ is closed. We optimize for our choice of $r$ and get
$$
\limsup_{\varepsilon\to0} \varepsilon \log(\bP[X_{\varepsilon}^x\in A]) \leq -\Delta(A),
$$
which is the upper inequality for the Theorem. 

\emph{Proving the lower bound.}  Now consider $A$ to be an open set in the H\"older topology and let $h\in H^{\otimes d'}$ such that $\Phi^x(h)\in A$. There exists a $\rho>0$ such that the H\"older ball $B_\alpha(\Phi^x(h), \rho) \subset A$. Also we have that
\begin{align*}
\bP[\|{\sqrt{\varepsilon}} W &-h\|_\infty <\delta]
\\
& \leq \bP[\|X_{\varepsilon}^x-\Phi^x(h)\|_\alpha \geq \rho, \|{\sqrt{\varepsilon}} W -h\|_\infty \leq \delta] + \bP[\|X_{\varepsilon}^x-\Phi^x(h)\|_\alpha < \rho].
\end{align*}
Hence
\begin{align*}
\bP[X_{\varepsilon}^x\in A] \geq& \bP[\|X_{\varepsilon}^x-\Phi^x(h)\|_\alpha < \rho] 
\\
\geq& \bP[\|{\sqrt{\varepsilon}} W -h\|_\infty <\delta] - \bP[\|X_{\varepsilon}^x-\Phi^x(h)\|_\alpha \geq \rho, \|{\sqrt{\varepsilon}} W -h\|_\infty \leq \delta]
\\
\geq&\bP[\|{\sqrt{\varepsilon}} W -h\|_\infty <\delta] - \exp\Big(-\frac{R}{\varepsilon}\Big).
\end{align*}
Applying the LDP for the Brownian motion (see Lemma \ref{camion}) and using that $\frac{\|\dot h\|_2^2}{2}\geq \Delta(B_\infty(h, \delta))$, we see that
$$
\bP[\|{\sqrt{\varepsilon}} W-h\|_\infty < \delta] \geq \exp\Big(-\frac{\|\dot h\|_2^2}{2\varepsilon}\Big)
$$
and hence 
$$
\bP[X_{\varepsilon}^x\in A]
\geq \exp\Big( -\frac{\|\dot h\|_2^2}{2\varepsilon}\Big) - \exp\Big(-\frac{R}{\varepsilon}\Big),
$$
where we can choose $R$ to take any value. Choosing $R=\|\dot h\|_2^2$ and rearranging we get
$$
\bP[X_{\varepsilon}^x\in A]
\geq 
\exp\Big( -\frac{\|\dot h\|_2^2}{2\varepsilon}\Big) \Big( 1-\exp\Big( -\frac{\|\dot h\|_2^2}{2\varepsilon}\Big)\Big). 
$$
Hence
$$
\liminf_{\varepsilon\to 0} \varepsilon \log(\bP[X_{\varepsilon}^x\in A] ) 
\geq \liminf_{\varepsilon\to 0} \varepsilon \log\Big(1-\exp\Big( -\frac{\|\dot h\|_2^2}{2\varepsilon}\Big)\Big) -\frac{\|\dot h\|_2^2}{2}.
$$
The limit goes to $0$ for any choice of $h\in H^{\otimes d'}$. Finally, as $h$ was arbitrarily chosen in $A$, we take the infimum over $h$ and get
$$
\liminf_{\varepsilon\to 0} \varepsilon \log(\bP[X_{\varepsilon}^x\in A] ) \geq - \Delta(A).
$$
This completes the proof of the theorem. 
\end{proof}

\section{Functional Iterated Logarithm Law}
\label{sec:StrassenSection}

Strassen's Law, or the Law of Iterated Logarithm describes the magnitude of the fluctuations of a Brownian motion. It was first proved in \cite{strassen1964invariance}. Observe that for a Brownian Motion $W(t)$, we have that $X_n^{(1)} (t) = {W(nt)}/{n} \to 0$ as $n\to \infty$ both in probability and almost surely. However $X_n^{(2)} (t) = {W(nt)}/{\sqrt{n}}$ is also a Brownian Motion for any choice of $n$. Therefore, something is happening between $n$ and $\sqrt{n}$ which is turning a stochastic process into a deterministic constant in the limit as $n\to \infty$. Strassen's Law says that
$$
X_n^{(3)}(t) = \frac{W(nt)}{\sqrt{n\log\log(n)}},
$$
converges to 0 in probability but does not converge almost surely. In particular
$$
\limsup_{n\to \infty} X_n^{(3)} (1) = \sqrt{2},\quad \textrm{almost surely.}
$$
In this section we are interested in studying whether stochastic processes have a similar type of property. We will consider the solution of the SDE run over a large time interval of order $n$ and rescaled to order $\sqrt{n \log(\log(n))}$. Similar to the proof of Strassen's Law, we will show that the set of rescaled paths is relatively compact in the H\"older topology but that the set of limit points of this set is uncountable which implies the failure of almost sure convergence. 

In \cite{baldi1986large}, Baldi proves a Law of Iterated Logarithm for classical SDEs for the uniform topology. This was then extended in \cite{eddahbi2000large} and later \cite{eddahbi2002strassen} to other coarser pathspace topologies. Standard LDP results easily give us convergence in probability. We calculate the set of possible limit points of the scaled diffusions which for a classical SDE are 
\begin{align*}
\Big\{ \Phi^x(h) : d\Phi^x(h)(t) &= b\big(\Phi^x(h)(t)\big) dt + \sigma\big(\Phi^x(h)(t)\big) \dot{h}(t)dt,\\
&  \Phi^x(h)(0) = x \mbox{ and } \|\dot{h}\|_2\leq \sqrt{2}\Big\}.
\end{align*}
We show below, that similarly for a McKean-Vlasov SDE these are
\begin{align*}
\Big\{ \Phi^x(h) : 
d\Phi^x(h)(t) & = b\big(\Phi^x(h)(t), \delta_{\Phi^x(0)(t)}\big) dt 
                + \sigma\big(\Phi^x(h)(t), \delta_{\Phi^x(0)(t)}\big) \dot{h}(t)dt, 
\\
&
\Phi^x(h)(0) 
=
 x \mbox{ and } ||\dot{h}||_2\leq \sqrt{2}\Big\}.
\end{align*}
We will follow the methods of \cite{baldi1986large}, \cite{eddahbi2000large} and \cite{eddahbi2002strassen} to extend the LDP results to prove an Iterated Logarithm Law for the class of McKean-Vlasov SDEs in Theorem \ref{theo:LocLipMcK-V.ExistUniq}. It seems possible to use microscopic rescaling of the Brownian motion such as in \cite{gantert1993inversion} to provide an alternative proof of our result, however, we do not pursue this point.

\begin{remark}[Decoupling Argument]
\label{rem:NoDecoupling}
In this section, we are unable to use a decoupling argument as highlighted in Section \ref{Section:DecouplingArgument}.  

To the best of our knowledge, there are no results proving a Strassen type law for SDEs with coefficients which can vary in time and we were unable to establish any such results while working on this paper. The conditions that we require on the measure dependency are similar to those of the spacial dependency and do not naturally translate into conditions for time dependency. Therefore, proving that they are satisfied is much easier in the MV-SDE setting when they are written as properties on the measure dependency than for some general time dependent coefficient.

\end{remark}

\subsection*{Functional Iterated Logarithm Law for McKean-Vlasov SDEs}

Firstly, we need to define in what sense we will be rescaling our MV-SDE. 
\begin{definition}
\label{dfn:SystemContraction}
Let $\alpha\in \bR^+$. A family of continuous bijections $\Gamma_\alpha : \bR^d \to \bR^d$ is said to be a System of Contractions centered at $x$ if
\begin{enumerate}
\item $\Gamma_\alpha(x) = x$ for every $\alpha \in \bR^+$. 
\item If $\alpha\geq \beta$ then $| \Gamma_\alpha(y_1) - \Gamma_\alpha(y_2) - \Gamma_\alpha(z_1) +\Gamma_\alpha(z_2)| \leq |\Gamma_\beta(y_1) - \Gamma_\beta(y_2) - \Gamma_\beta(z_1) +\Gamma_\beta(z_2)|$ for every $y_1, y_2, z_1, z_2 \in \bR^d$. 
\item $\Gamma_1$ is the identity and $(\Gamma_\alpha)^{-1} = \Gamma_{\alpha^{-1}}$. 
\item For every compact set $\cK\subset C^\alpha([0,1]; \bR^d)$, $f\in \cK$ and $\varepsilon>0$, $\exists \delta>0$ such that $|pq - 1|<\delta$ implies
$$
\|\Gamma_p \circ \Gamma_q (f) - f\|_\alpha < {\sqrt{\varepsilon}},\qquad \forall\ p,q\in\bR^+.
$$
\end{enumerate}
\end{definition}
The simplest example of such a system of contractions is $\Gamma_\alpha(y) = \tfrac{y}{\alpha}$ centered at $x=0$. Indeed this is the specific operator used when proving Strassen's Law for Brownian motion. Also note that we only really care about $\Gamma_\alpha$ for $\alpha>1$. It is clear that for $\alpha<1$, the operators $\Gamma_\alpha$ will not be contraction operators. 

\begin{example}
In fact, a linear contraction operator with a transformation will satisfy these conditions. Consider for example 
$\Gamma_\alpha(y) = \tfrac{(y-x)}{\alpha} + x$ and naturally, $\Gamma_\alpha(x) = x$. Similarly, for $\alpha \geq \beta$
\begin{align*}
\Gamma_\alpha(y_1) & - \Gamma_\alpha(y_2) - \Gamma_\alpha(z_1) +\Gamma_\alpha(z_2) 
\\
&
= \frac{y_1-x}{\alpha} + x - \frac{y_2-x}{\alpha} - x - \frac{z_1-x}{\alpha} - x + \frac{z_2-x}{\alpha} + x
\\
&\leq \frac{y_1-y_2-z_1+z_2}{\beta}
=\Gamma_\beta(y_1) - \Gamma_\beta(y_2) - \Gamma_\beta(z_1) +\Gamma_\beta(z_2)
\end{align*}
Finally, for $|pq - 1|<\delta$ we have 
\begin{align*}
\|\Gamma_p \circ \Gamma_q (f) - f\|_\alpha  
&
= \sup_{s, t\in[0,1]} \frac{|\Gamma_p \circ \Gamma_q (f(t)) - f(t) - \Gamma_p \circ \Gamma_q (f(s)) + f(s)|}{|t-s|^\alpha}
\\
&
=\sup_{s, t\in[0,1]} \frac{\Big| \big[ \frac{f(t)}{p q} - f(t) \big] - \big[\frac{f(s)}{p q} - f(s) \big]\Big|}{|t-s|^\alpha}
\\
&
= \Big|\frac{1}{pq} - 1\Big|\sup_{s, t\in[0,1]} \frac{| f(t) - f(s)|}{|t-s|^\alpha} 
\leq \frac{\delta }{2} \|f\|_\alpha.
\end{align*}
\end{example}
These conditions are slightly stronger than those of \cite{baldi1986large} and are used in \cite{eddahbi2002strassen}. Condition 2. in Definition \ref{dfn:SystemContraction} needs to be strengthened to allow it to be applied to H\"older norms rather than just supremum norms. Observe that by choosing $y_2 = z_2 = x$, one gets 
$$
| \Gamma_\alpha(y_1)  - \Gamma_\alpha(z_1) | \leq |\Gamma_\beta(y_1) - \Gamma_\beta(z_1) |.
$$ 
This stronger condition still allows for the example of linear contractions up to a transformation.  For $s\in \bR^+$ define 
$$
\phi(s) = \sqrt{s \log(\log(s))}.
$$
Let $b:\bR^d \times \cP_2(\bR^d) \to \bR^d$ and $\sigma:\bR^d\times \cP_2(\bR^d) \to \bR^{d \times m}$ be progressively measurable functions such that there is a unique solution to
$$
dY(t) = b(Y(t), \cL_t^Y) dt + \sigma(Y(t), \cL_t^Y) dW(t), \qquad Y(0) = x\in \bR^d. 
$$


\begin{definition}
Let $u>3$. Let $\hat{\sigma}_u : \bR^d \times \cP_2(\bR^d) \to \bR^{d\times d'}$ and $\hat{b}_u : \bR^d \times \cP_2(\bR^d) \to \bR^{d}$ be such that
\begin{align*}
\hat{\sigma}_u(y, \mu) &= \phi(u) \nabla \Big[\Gamma_{\phi(u)}\Big]\Big( \Gamma_{\phi(u)^{-1}}(y) \Big)^T \sigma\Big(\Gamma_{\phi(u)^{-1}}(y), \mu\circ \Gamma_{\phi(u)} \Big)
\\
\hat{b}_u(y, \mu) &= u \textbf{L}(y, \mu)\Big[ \Gamma_{\phi(u)} \Big] \Big( \Gamma_{\phi(u)^{-1}}(y) \Big),
\end{align*}
where for $\tilde{a} = \sigma^T \sigma$ the operator $\textbf{L}(\cdot, \cdot)[\cdot]$ is given as 
\begin{align*}
\textbf{L}(y, \mu) \Big[ f\Big]\Big(z\Big) =& \sum_{i=1}^d \frac{\partial f}{\partial y_i} \Big(\Gamma_{\phi(u)^{-1}}(z)\Big) b_i\Big( \Gamma_{\phi(u)^{-1}}(y), \mu\circ\Gamma_{\phi(u)} \Big) 
\\
&+ \frac{1}{2}\sum_{i, j=1}^d \tilde{a}_{i, j}\Big( \Gamma_{\phi(u)^{-1}}(y), \mu\circ\Gamma_{\phi(u)} \Big) \frac{\partial^2 f
}{\partial y_i \partial y_j}\Big(\Gamma_{\phi(u)^{-1}}(z)\Big),
\end{align*}

\end{definition}

\begin{assumption}
\label{GammaSystemContractions}
Throughout we assume that $\Gamma_u$ is twice differentiable for all $u>3$ and that $\forall y \in \bR^d$, $\forall \mu\in \cP_2(\bR^d)$ we have for some $\hat{\sigma} : \bR^d \times \cP_2(\bR^d) \to \bR^{d\times d'}$ and $\hat{b} : \bR^d \times \cP_2(\bR^d) \to \bR^{d}$
\begin{align*}
\lim_{u\to \infty} \hat{\sigma}_u(y, \mu) = \hat{\sigma}(y, \mu)
\quad\textrm{and}\quad 
\lim_{u\to \infty} \hat{b}_u(y, \mu) = \hat{b}(y, \mu),
\end{align*}
where $\hat{\sigma}$ and $\hat{b}$ satisfy Assumption \ref{ass:MKSDE-MainExistTheo} with the addition that $\hat{\sigma}$ is bounded by constant $M$. 
\end{assumption}

For $t, u \in \bR^+$ define 
$$
Z_u(t) = \Gamma_{\phi(u)} \big(Y(ut)\big), 
$$
and recall that since $Y(0)=x$ and $\Gamma_u(x) = x$, by assumption $Z_u(0) = x$. We use It\^o's formula on $Z_u(t)$ by assuming twice differentiability of $\Gamma_{\phi(u)}(\cdot)$. 
\begin{align*}
dZ_u(t) = d( \Gamma_{\phi(u)} ( Y(ut) )
= \nabla \Big[\Gamma_{\phi(u)}\Big]& \Big( Y(ut) \Big)^T dY(ut) 
\\
&+ \frac{dY(ut)^T}{2} H\Big[\Gamma_{\phi(u)}\Big] \Big( Y(ut) \Big) dY(ut). 
\end{align*}
Rewriting $Y(ut) = \Gamma_{\phi(u)^{-1}}(Z_u(t))$ and substituting in gives
\begin{align*}
dZ_u(t) =& u \sum_{i=1}^d \frac{\partial \Gamma_{\Phi(u)}}{\partial y_i}\Big( \Gamma_{\phi(u)^{-1}}(Z_u(t)) \Big) b_i\Big( \Gamma_{\phi(u)^{-1}}(Z_u(t)), \cL_t^{Z_u}) \circ \Gamma_{\phi(u)} \Big)  dt
\\
&+ \frac{u}{2} \sum_{i, j=1}^d \frac{\partial^2 \Gamma_{\Phi(u)}}{\partial y_i \partial y_j}\Big( \Gamma_{\phi(u)^{-1}}(Z_u(t)) \Big) \sum_{k=1}^{d'} \sigma_{k, i} \sigma_{j, k} \Big( \Gamma_{\phi(u)^{-1}}(Z_u(t)), \cL_t^{Z_u}) \circ \Gamma_{\phi(u)} \Big)  dt
\\
&+ \sum_{i=1}^d \frac{\partial \Gamma_{\Phi(u)}}{\partial y_i}\Big( \Gamma_{\phi(u)^{-1}}(Z_u(t)) \Big) \sum_{k=1}^{d'} \sigma_{i, k}\Big( \Gamma_{\phi(u)^{-1}}(Z_u(t)), \cL_t^{Z_u}) \circ \Gamma_{\phi(u)} \Big)dW_k(ut). 
\end{align*}
Next, using that $\cW_u (t) = \frac{W(ut)}{\sqrt{u}}$ is a Brownian motion, we can rewrite all of this as the SDE with initial condition $Z_u(0) = x$
$$
dZ_u(t) = \frac{1}{\sqrt{\log\log(u)}} \hat{\sigma}_u \Big(Z_u(t), \cL_t^{Z_u})\Big) d\cW_u(t) + \hat{b}_u \Big(Z_u(t), \cL_t^{Z_u})\Big) dt.
$$
Under Assumption \ref{GammaSystemContractions} and using Theorem \ref{theorem:LDPResult} we get
\begin{align}
\label{eq:LDP}
-\Delta(\mathring{A}) &\leq \liminf_{u\to \infty}\frac{1}{\log\log(u)} \log \bP(Z_u\in A) \nonumber
\\
& \leq \limsup_{u\to \infty} \frac{1}{\log\log(u)} \log \bP(Z_u\in A) \leq -\Delta(\bar{A})
\end{align}
for every Borel set $A$ induced by the $\alpha$-H\"older topology with $\alpha <1/2$. Recall the definition of the rate function $\Delta(A):=\inf\big\{ {\|\dot h\|_2^2}/{4} ; h\in H^{\otimes d'}, \Phi^x(h)(\cdot)\in A\big\}$ with Skeleton Process
$$
\Phi^x(h)(t)= x + \int_0^t \hat{b}(\Phi^x(h)(s), \delta_{\Phi^x(0)(s)}) ds + \int_0^t \hat{\sigma}(\Phi^x(h)(s), \delta_{\Phi^x(0)(s)}) \dot{h}(s) ds. 
$$

We can now state the main result of this section.
\begin{theorem}
\label{theorem:StrassensLaw}
With probability 1, the set of paths $\{Z_u; u>3\}$ is relatively compact on the H\"older topology and its set of limit points coincides with $K=\{\Phi(h): \frac{||\dot{h}||_2^2}{2}\leq 1 \}$. 
\end{theorem}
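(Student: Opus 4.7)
The plan is to follow the template of \cite{baldi1986large} in three movements: (i) verify that $K$ is compact in the $\alpha$-H\"older topology, (ii) show that a.s.\ every limit point of $\{Z_u\}$ lies in $K$ (which combined with (i) gives relative compactness), and (iii) show that every $f\in K$ is a.s.\ a limit point of the family $\{Z_u;u>3\}$. Throughout I treat the rate function consistently with Theorem \ref{theorem:LDPResult}, so that $K=\{\Delta\leq 1\}$.

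Compactness of $K$ is by now standard: the closed ball $\{h\in H^{\otimes d'}:\|\dot h\|_2^2/2\leq 1\}$ is weakly compact; the skeleton map $h\mapsto\Phi^x(h)$ is weakly-to-uniformly continuous (by stability of ODE solutions); and a uniform $C^{1/2}$ estimate on the image together with Arzel\`a--Ascoli gives compactness in $C^\alpha$ for $\alpha<1/2$. For step (ii), fix $\delta>0$ and let $K^\delta:=\{f\in C^\alpha:d_\alpha(f,K)\geq\delta\}$. By lower semicontinuity of $\Delta$ and compactness of its sublevel set $K$, there exists $\eta=\eta(\delta)>0$ with $\Delta(K^\delta)\geq 1+\eta$. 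Along the geometric subsequence $u_n=c^n$ (any $c>1$), the LDP upper bound \eqref{eq:LDP} yields, for $n$ large,
$$\bP(Z_{u_n}\in K^\delta)\leq\exp\bigl(-(1+\eta/2)\log\log u_n\bigr)\lesssim n^{-(1+\eta/2)},$$
which is summable; Borel--Cantelli gives $d_\alpha(Z_{u_n},K)<\delta$ eventually a.s. To pass from the subsequence to the continuous family I would use the H\"older regularity of $Y$ (via Proposition \ref{proposition:ContinuityOfLaw}) together with property 4 of the contraction system (Definition \ref{dfn:SystemContraction}) to estimate $\|Z_u-\Gamma_{\phi(u)/\phi(u_n)}(Z_{u_n})\|_\alpha$ uniformly for $u\in[u_n,u_{n+1}]$, then choose $c$ close to $1$ and apply Borel--Cantelli once more to the exceptional events.

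For step (iii), fix $h\in H^{\otimes d'}$ with $\|\dot h\|_2^2/2<1$ and $\delta>0$. The open ball $B_\alpha(\Phi^x(h),\delta)$ contains $\Phi^x(h)$ itself, so $\Delta(B_\alpha(\Phi^x(h),\delta))\leq\|\dot h\|_2^2/2<1$. Hence by the LDP lower bound in \eqref{eq:LDP} there exists $\theta<1$ with $\bP(Z_{u_n}\in B_\alpha(\Phi^x(h),\delta))\geq\exp(-\theta\log\log u_n)\gtrsim n^{-\theta}$, so that $\sum_n\bP(A_n)=\infty$ with $A_n:=\{Z_{u_n}\in B_\alpha(\Phi^x(h),\delta)\}$. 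Invoking the converse Borel--Cantelli requires (asymptotic) independence of the $A_n$; this is where the McKean--Vlasov coupling is delicate. I would exploit the fact that $Y(0)=x$ is deterministic, hence $t\mapsto\cL_t^Y$ is a deterministic curve and $Y$ solves a \emph{standard} SDE with deterministic time-dependent coefficients. One can therefore couple $Z_{u_n}$ with a version built solely from the Brownian increments on the disjoint slab $[u_{n-1},u_n]$ (restarting the SDE at time $u_{n-1}$ with a frozen deterministic law curve), picking up an error that vanishes uniformly via the estimates already used in Section \ref{sec:LPDresults}. The slab events are then genuinely independent and the second Borel--Cantelli gives $Z_{u_n}\in B_\alpha(\Phi^x(h),\delta)$ infinitely often a.s. Density of $\{h:\|\dot h\|_2^2/2<1\}$ in $\{h:\|\dot h\|_2^2/2\leq 1\}$ (under the topology induced by $\Phi^x$ on $K$) finally covers the full limit set $K$.

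The main obstacle is the independence step in (iii): unlike a classical SDE where the strong Markov property is immediate, here the rescaled process $Z_u$ carries the full deterministic curve $\cL_\cdot^Y$ into its coefficients, so events across disjoint time windows do not trivially decouple. The resolution rests on the two observations highlighted above: (a) once $Y(0)$ is deterministic, $\cL_\cdot^Y$ is a deterministic forcing and does not destroy the independence of Brownian increments on disjoint intervals; (b) restarting the SDE on $[u_{n-1},u_n]$ with this frozen law curve produces only a small coupling error, controllable by the moment and stability estimates of Section \ref{sec:MV-SDEsexist+uniq}. The secondary technical point is the subsequence-to-continuum interpolation in (ii), which is clean precisely because the system of contractions was formulated with the H\"older-strong condition 4 of Definition \ref{dfn:SystemContraction}.
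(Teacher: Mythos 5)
Your steps (i)–(ii) follow the paper's route (the paper's Lemmas \ref{lemma:5.1} and \ref{lemma:5.2} and Step 1 of its proof do exactly the geometric-subsequence upper bound plus interpolation via the contraction system, though the interpolation there is itself an LDP estimate on an oscillation event $b_\varepsilon$ rather than a pathwise regularity argument, which matters because Proposition \ref{proposition:ContinuityOfLaw} only controls a fixed window $[0,T]$ and does not by itself give the uniform-in-$u$ bound on $\|Z_u-\Gamma_{\phi(u)}\circ\Gamma_{\phi(c^j)^{-1}}(Z_{c^j})\|_\alpha$ with summable exceptional probabilities). The genuine gap is in your step (iii). You want the second Borel--Cantelli lemma, and to manufacture independence you propose restarting the equation at time $u_{n-1}$ from a deterministic state and claiming the coupling error ``vanishes uniformly via the estimates already used''. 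It does not: the true path restarts from the random, $\cF_{u_{n-1}}$-measurable position $Y(u_{n-1})$, and under the paper's hypotheses the drift is only monotone with a \emph{positive} constant $L$ (no coercivity), so the Gr\"onwall comparison of two solutions over the slab $[u_{n-1},u_n]$ allows the initial offset to grow like $e^{L(u_n-u_{n-1})}$ with $u_n-u_{n-1}\sim u_n$; after rescaling by $\phi(u_n)$ this is not small, and even in the bounded-Lipschitz classical case the offset $|Y(u_{n-1})-x|/\phi(u_n)$ is only of order $\phi(u_{n-1})/\phi(u_n)\sim c^{-1/2}$, i.e.\ small only for $c$ large, which then clashes with the $c\downarrow 1$ regime you need elsewhere and forces an extra argument to recover all of $K$ from a sparse sequence. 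As stated, the independence/coupling step would fail, and note the paper explicitly flags in Remark \ref{rem:NoDecoupling} that a decoupling of this kind is not available here.

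The paper avoids independence for the diffusion altogether, and this is the idea missing from your proposal: for the lower half it defines $E_j=\{\|\cW_{c^j}/\sqrt{\log\log(c^j)}-h\|_\infty\leq\beta\}$, an event about the driving Brownian motion only, and $F_j=\{\|Z_{c^j}-\Phi(h)\|_\alpha\leq\sqrt{\varepsilon}\}$. The classical functional Strassen law gives $\bP(\limsup_j E_j)=1$, while Proposition \ref{HolderProp} (the whole point of its ``conditional'' form $\bP[\|X-\Phi(h)\|_\alpha\geq\rho,\ \|\sqrt{\varepsilon}W-h\|_\infty\leq\delta]\lesssim e^{-R/\varepsilon}$) bounds $\bP[E_j\cap F_j^c]\lesssim j^{-2}$, which is summable; hence a.s.\ $E_j$ occurs infinitely often while $E_j\cap F_j^c$ occurs only finitely often, so $F_j$ occurs infinitely often. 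No Markov restart, no converse Borel--Cantelli, and no coupling error estimate is needed. If you want to salvage your route you would have to prove a genuine dissipativity-type stability of the flow over long slabs, which the standing assumptions do not provide; transferring the recurrence from $W$ to $Z$ via Proposition \ref{HolderProp} is the intended (and much cheaper) mechanism.
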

We first prove some technical Lemmas. 
\begin{lemma}
\label{lemma:5.1}
$\forall c>1$ and $\forall \varepsilon>0$ there exists a positive integer $j_0(\omega)$ almost surely finite such that $\forall j>j_0$
\begin{align*}
d_\alpha (Z_{c^j} , K)< \sqrt{\varepsilon},
\quad \textrm{where}\quad 
d_\alpha (x,A) = \inf \Big\{ \|x - y \|_\alpha : y\in A \Big\}.
\end{align*}
\end{lemma}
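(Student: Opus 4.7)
The plan is to apply the Borel--Cantelli lemma to the events $\{d_\alpha(Z_{c^j}, K) \geq \sqrt{\varepsilon}\}$, using the LDP upper bound in \eqref{eq:LDP}. Fix $c>1$ and $\varepsilon>0$ and set the closed subset of $C^\alpha([0,1];\bR^d)$
\begin{equation*}
A_\varepsilon := \big\{ f\in C^\alpha([0,1];\bR^d) : d_\alpha(f,K) \geq \sqrt{\varepsilon} \big\},
\end{equation*}
which by construction is disjoint from $K$.

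The first step is to verify that $K$ is compact in the $\alpha$-H\"older topology. Since $\{h\in H^{\otimes d'} : \|\dot h\|_2^2\leq 2\}$ is a closed ball in the Hilbert space $H^{\otimes d'}$, it is weakly compact, and $K$ is its image under the Skeleton map $\Phi^x$. Continuity of $\Phi^x$ from this weakly compact set into $C^\alpha$ follows by the same Gr\"onwall/Arzel\`a-Ascoli style argument already carried out in the proof of Proposition \ref{manue} together with the $1/2$-H\"older regularity of $\Phi^x(h)$ observed after \eqref{eq:SkeletonOfSandra}; equivalently, the rate function $I(f):=\inf\{\tfrac12\|\dot h\|_2^2 : \Phi^x(h)=f\}$ underlying $\Delta$ is a good rate function so $K=\{I\le 1\}$ is compact.

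The second step is to derive the strict separation $\Delta(A_\varepsilon)>1$. Suppose, aiming at a contradiction, that $\Delta(A_\varepsilon)=1$. Then there is a sequence $f_n\in A_\varepsilon$ with $I(f_n)\to 1$. The goodness of $I$ places $(f_n)$ in a compact level set $\{I\le 2\}$, so a subsequence $f_{n_k}$ converges in $C^\alpha$ to some $f_\infty$. Lower semicontinuity of $I$ gives $I(f_\infty)\le 1$, i.e.\ $f_\infty\in K$; but $A_\varepsilon$ is closed, so $f_\infty\in A_\varepsilon$, contradicting $A_\varepsilon\cap K=\emptyset$. Hence $\Delta(A_\varepsilon)\ge 1+\eta$ for some $\eta>0$.

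Finally, applying \eqref{eq:LDP} with $u=c^j$, for all $j$ large enough
\begin{equation*}
\bP\big(Z_{c^j}\in A_\varepsilon\big)
\leq \exp\!\Big(-\big(1+\tfrac{\eta}{2}\big)\log\log(c^j)\Big)
= \big(j\log c\big)^{-(1+\eta/2)}.
\end{equation*}
Since $1+\eta/2>1$, the series $\sum_{j}\bP(Z_{c^j}\in A_\varepsilon)$ converges; by Borel--Cantelli, $\bP(Z_{c^j}\in A_\varepsilon \text{ i.o.})=0$, which is exactly the claim: almost surely there exists a finite $j_0(\omega)$ so that $d_\alpha(Z_{c^j},K)<\sqrt{\varepsilon}$ for every $j>j_0$. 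The main subtlety is the first step: the weak-to-H\"older continuity of the Skeleton, needed for compactness of $K$ and hence for goodness of $I$, requires care under Assumption \ref{GammaSystemContractions}; once this is established, the remaining Borel--Cantelli argument is routine.
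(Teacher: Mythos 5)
Your proof follows essentially the same route as the paper: define the closed set of paths at $\alpha$-H\"older distance at least $\sqrt{\varepsilon}$ from $K$, observe that its rate strictly exceeds $1$, apply the LDP upper bound \eqref{eq:LDP} at $u=c^j$ to get probabilities of order $j^{-(1+\delta)}$, and conclude by Borel--Cantelli. The only difference is that you spell out the compactness/lower-semicontinuity argument behind the strict inequality $\Delta(A_\varepsilon)>1$, which the paper simply asserts (``by definition''), so this is a correct proof of the same kind with one step made explicit.
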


\begin{proof}
Start by considering the set of $\alpha$-H\"older continuous paths $C_\varepsilon:= \{g; d_\alpha(g, K)\geq \sqrt{\varepsilon} \}$. By definition we have that $\Delta(C_\varepsilon)>1$, so there exists a real number $\delta>0$ such that $\Delta(C_\varepsilon)>1+\delta$. Using the LDP results in \eqref{eq:LDP}, we can rearrange this to get
$$
\bP\big[ Z_{c^j} \in C_\varepsilon \big] 
\leq 
\exp\Big( -(1+\delta) \log\log(c^j) \Big) \lesssim \frac{1}{j^{1+\delta}} . 
$$
Clearly $\sum_{j=1}^\infty \bP\big[ Z_{c^j} \in C_\varepsilon \big] <\infty$ and by a direct application of Borel-Cantelli we have 	
$
\bP\big[ d_\alpha (Z_{c^j}, K)> \sqrt{\varepsilon} \mbox{ i.o.} \big] = 0
$.
\end{proof}

\begin{lemma}\label{lemma:5.2}
$\forall \varepsilon>0$ $\exists c_\varepsilon>1$ such that for $1<c<c_\varepsilon$ there exists an almost surely finite integer $j_0(\omega)$ such that $\forall j>j_0$, $A_{j, c}\leq \sqrt{\varepsilon}$. 
\end{lemma}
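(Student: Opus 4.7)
The quantity $A_{j,c}$ is, in the spirit of \cite{baldi1986large,eddahbi2002strassen}, the H\"older oscillation of the rescaled family $u \mapsto Z_u$ on the geometric block $[c^j,c^{j+1}]$, i.e.~a quantity of the schematic form
\begin{equation*}
A_{j,c} \;=\; \sup_{c^j \le u \le c^{j+1}} \big\|\, Z_u - \Gamma_{\phi(u)/\phi(c^j)}\bigl(Z_{c^j}\bigr) \,\big\|_\alpha,
\end{equation*}
the intertwining by $\Gamma_{\phi(u)/\phi(c^j)}$ (and an implicit time-rescaling) being present because $Z_u$ and $Z_{c^j}$ a priori live on different scales. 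The plan is to mimic the proof of Lemma \ref{lemma:5.1}: establish, for $c$ sufficiently close to $1$, a summable tail bound
\begin{equation*}
\bP\big[A_{j,c} > \sqrt{\varepsilon}\big] \;\lesssim\; \bigl(j \log c\bigr)^{-(1+\delta)}
\end{equation*}
for some $\delta = \delta(c,\varepsilon) > 0$, and then conclude by Borel--Cantelli.

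To obtain the tail bound, I would first note that each $Z_u$ solves a MV-SDE driven by a rescaled Brownian motion with small-noise parameter of order $(\log\log u)^{-1/2}$, so the LDP \eqref{eq:LDP} of Theorem \ref{theorem:LDPResult} is available. Using the uniform convergence $\hat{b}_u \to \hat{b}$ and $\hat{\sigma}_u \to \hat{\sigma}$ from Assumption \ref{GammaSystemContractions} together with property (4) of the system of contractions in Definition \ref{dfn:SystemContraction}, the Skeleton image of the MV-SDE governing $Z_u - \Gamma_{\phi(u)/\phi(c^j)}(Z_{c^j})$ collapses to a neighbourhood of $0$ in $\| \cdot \|_\alpha$ as $c \searrow 1$. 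Hence, once $c < c_\varepsilon$, every deterministic path $g$ with $\|g\|_\alpha > \sqrt{\varepsilon}$ lies outside that Skeleton image and the infimum of the associated rate function on $\{g : \|g\|_\alpha > \sqrt{\varepsilon}\}$ is strictly above $1$, with gap $\delta(c,\varepsilon) > 0$. This gives the required single-parameter LDP bound at $u = c^{j+1}$. Passing from a single $u$ to the supremum over $u \in [c^j, c^{j+1}]$ is handled by a finite covering of the interval together with a union bound, using a modulus-of-continuity estimate in $u$ produced from standard Gr\"onwall arguments on $u \mapsto Z_u$.

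The principal obstacle is precisely this uniformity step: since the decoupling argument is not available in our MV-SDE setting (Remark \ref{rem:NoDecoupling}) and the law $\cL_t^{Z_u}$ is implicitly $u$-dependent, one must control both spatial and measure variations simultaneously. The Gr\"onwall-type stability estimates developed in Theorem \ref{theo:LocLipMcK-V.ExistUniq} and Proposition \ref{manue}, applied to the pair $(Z_u, Z_{c^j})$ and their laws via the Lipschitz-in-Wasserstein property of $\hat{b}$ and $\hat{\sigma}$, should upgrade the pointwise-in-$u$ LDP into the uniform one, provided $c_\varepsilon > 1$ is chosen small enough that $\phi(c^{j+1})/\phi(c^j) - 1$ and the corresponding perturbation of the Skeleton are controlled by $\sqrt{\varepsilon}/2$.
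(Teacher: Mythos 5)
Your outer strategy (a tail bound $\bP[A_{j,c}>\sqrt{\varepsilon}]\lesssim j^{-(1+\delta)}$ for $c$ close to $1$, then Borel--Cantelli) is the paper's, but the mechanism you propose for the two hard steps is not, and the crucial one does not close. The paper never covers the interval of scales and never takes a union bound over $u$: it uses the monotonicity property (2) of Definition \ref{dfn:SystemContraction} to dominate, simultaneously for all $u\in[c^{j-1},c^j]$, the increments $\Gamma_{\phi(u)}(Y(ut))-\Gamma_{\phi(u)}(Y(c^jt))-\Gamma_{\phi(u)}(Y(us))+\Gamma_{\phi(u)}(Y(c^js))$ by the same expression with $\Gamma_{\phi(c^{j-1})}$, and then, writing $u=vc^j$ with $v\in[1/c,1]$ and using property (4) to pass from $\Gamma_{\phi(c^{j-1})}$ to $\Gamma_{\phi(c^{j})}$ at the cost of $\sqrt{\varepsilon}/2$, it shows that the whole event $\{A_{j,c}>\sqrt{\varepsilon},\,\|Z_{c^j}\|_\alpha\le C\}$ is contained in $\{Z_{c^j}\in b_\varepsilon\}$ for a single deterministic bad set $b_\varepsilon$ of paths (a condition on the joint increments $g(vt)-g(t)-g(vs)+g(s)$). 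The LDP \eqref{eq:LDP} is then applied once, at the single scale $c^j$, and $\Delta(b_\varepsilon)>1+\delta$ is obtained not from the uniform convergence $\hat b_u\to\hat b$, $\hat\sigma_u\to\hat\sigma$, but from the quantitative skeleton estimate $|\int_s^t d\Phi(h)(r)|\le M_1\sqrt{|t-s|}\,\|\dot h\|_2+M_2|t-s|$ together with the fact that the relevant time increments have length at most of order $1-1/c$: any $h$ with $\Phi(h)\in b_\varepsilon$ must satisfy $\|\dot h\|_2\ge 1+\delta$ once $c$ is close enough to $1$.

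Your substitute for the uniformity in $u$ --- a finite covering of $[c^j,c^{j+1}]$, a union bound, and a ``Gr\"onwall'' modulus of continuity of $u\mapsto Z_u$ --- is the genuine gap. Since the available LDP bound at a single scale is only polynomial in $j$ (of order $j^{-(1+\delta)}$), the union bound forces you to control the continuity in $u$ on an event whose complement has probability at most comparable to $j^{-(1+\delta)}$, uniformly in $j$; Gr\"onwall gives pathwise or moment estimates, not tail estimates of that strength, and $\|Z_u-Z_{u'}\|_\alpha$ involves increments of $Y$ over time intervals of length of order $c^j|u/c^j-u'/c^j|$, so the required estimate is itself a large-deviations statement about the increments of $Z_{c^j}$ --- which is precisely what the reduction to $b_\varepsilon$ delivers without any covering. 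Likewise, your single-$u$ rate bound rests on an LDP for ``the MV-SDE governing $Z_u-\Gamma_{\phi(u)/\phi(c^j)}(Z_{c^j})$''; the difference of two rescalings of the same trajectory is not an MV-SDE with its own skeleton, and no such object appears in the proof. (Two smaller points: in the paper $A_{j,c}$ involves $\Gamma_{\phi(u)}\circ\Gamma_{\phi(c^j)^{-1}}$, which coincides with $\Gamma_{\phi(u)/\phi(c^j)}$ only for linear contractions, and there is no time-rescaling of $Z_{c^j}$ in its definition.)
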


\begin{proof}
For notational convenience define, for $c>1$ and for every positive integer $j$, the quantity
$$
A_{j, c} = \sup_{c^{j-1} \leq u \leq c^j} \|Z_u - \Gamma_{\phi(u)} \circ \Gamma_{\phi(c^j)^{-1}} (Z_{c^j}) \|_\alpha.
$$
Start by observing that the set $K$ is relatively compact in the $\alpha$-topology, so it is bounded. Therefore, by Lemma \ref{lemma:5.1}, we have that $\forall j>j_0$ that $||Z_{c^j}||_\alpha <C$. We want to show that
$$
\sum_{j\geq1} \bP\Big[ A_{c, j} >\sqrt{\varepsilon} \Big] < \infty
\quad \textrm{which is equivalent to}\quad 
\sum_{j>j_0} \bP\Big[ A_{c, j} >\sqrt{\varepsilon}, ||Z_{c^j}||_\alpha<C \Big] < \infty. 
$$
Considering one of these sets, we see
\begin{align*}
&\{ A_{j, c}>\sqrt{\varepsilon} , ||Z_{c^j}||_\infty \leq C\} 
\\
&= \Big\{ \sup_{c^{j-1} \leq u \leq c^j} \sup_{0\leq s, t\leq 1}\frac{| \Gamma_{\phi(u)}(Y(ut)) - \Gamma_{\phi(u)}(Y(c^jt)) - \Gamma_{\phi(u)}(Y( us) + \Gamma_{\phi(u)}(Y(c^js)) |}{|t-s|^\alpha } >\sqrt{\varepsilon}, 
\\
&\hspace{30pt} ||Z_{c^j}||_\alpha < C\Big\}.
\end{align*}
Using Definition \ref{dfn:SystemContraction}, for $u\in[c^{j-1}, c^{j+1}]$
\begin{align*}
|\Gamma_{\phi(u)}&(Y(ut)) - \Gamma_{\phi(u)}(Y(c^jt)) - \Gamma_{\phi(u)}(Y( us) + \Gamma_{\phi(u)}(Y(c^js)) |
\\
&\leq |\Gamma_{\phi(c^{j-1})}(Y(ut)) - \Gamma_{\phi(c^{j-1})}(Y(c^jt)) - \Gamma_{\phi(c^{j-1})}(Y( us) + \Gamma_{\phi(c^{j-1})}(Y(c^js)) |.
\end{align*}
Therefore

\begin{align*}
&\{ A_{c, j} >\sqrt{\varepsilon} \} 
\\
&\subseteq\Bigg\{ \sup_{\frac{1}{c}\leq v\leq 1} \sup_{0\leq s, t\leq 1} \frac{1}{|t-s|^\alpha} \Big|\Gamma_{\phi(c^{j-1})}(Y(c^j vt)) - \Gamma_{\phi(c^{j-1})}(Y(c^j t)) 
\\
&\hspace{100pt} - \Gamma_{\phi(c^{j-1})}(Y( c^j vs) + \Gamma_{\phi(c^{j-1})}(Y(c^j s)) \Big|
 >\sqrt{\varepsilon} \Bigg\}
\\
&\subseteq \Bigg\{ \sup_{\frac{1}{c}\leq v\leq 1} \sup_{0\leq s, t\leq 1} \frac{|Z_{c^j}(vt) - Z_{c^j}(t) - Z_{c^j} (vs) + Z_{c^j}(s) |}{|t-s|^\alpha} >\frac{{\sqrt{\varepsilon}}}{2} \Bigg\},
\end{align*}
using that $\exists j$ large enough so that for and $\delta>0$
$$
\frac{\phi(c^{j-1})}{\phi(c^{j})} = \frac{1}{\sqrt{c}} \sqrt{\frac{\log\log(c^j)}{\log\log(c^{j-1})}} \leq \frac{1}{\sqrt{c}} (1-\delta),
$$
and choosing $c$ small enough we can make $\Gamma_{\frac{\phi(c^{j-1})}{\phi(c^j)}}$ within $\tfrac{{\sqrt{\varepsilon}}}{2}$ of the identity operator using properties from Definition \ref{dfn:SystemContraction}.  Therefore
\begin{align*}
&\{ A_{c, j} >{\sqrt{\varepsilon}}, ||Z_{c^j}||_\infty \leq C \} 
\\
&\subseteq \Bigg\{ \sup_{\frac{1}{c}\leq v\leq 1} \sup_{0\leq s, t\leq 1} \frac{|Z_{c^j}(vt) - Z_{c^j}(t) - Z_{c^j}(vs) + Z_{c^j}(t)|}{|t-s|^\alpha} >\frac{{\sqrt{\varepsilon}}}{2}, ||Z_{c^j}||_\alpha \leq C \Bigg\}
\\
&\subseteq \{ Z_{c^j} \in b_{\varepsilon}\},
\end{align*}
where the set $b_{\varepsilon}$ is given by
$$
b_{\varepsilon} = \Bigg\{g\in C^\alpha([0,1]; \bR^d): \sup_{\frac{1}{c}\leq v\leq 1} \sup_{0\leq s, t\leq 1} \frac{|g(vt) - g(t) - g(vs) + g(t)|}{|t-s|^\alpha}  >\frac{{\sqrt{\varepsilon}}}{2}, ||g||_\alpha \leq C \Bigg\},
$$
as we would expect.  Let $h\in H^{\otimes d'}$ so $\|\dot{h}\|_2<\infty$ such that $\Phi(h)\in b_{\varepsilon}$, then
\begin{align}
\frac{{\sqrt{\varepsilon}}}{2} |t-s|^\alpha &\leq \Big| [ \Phi(h)(t) - \Phi(h)(vt) ] - [\Phi(h)(s) - \Phi(h)(sv)]\Big| \nonumber
\\
\label{eq:Lemma2.1}
&\leq \Big| \int_{(vt) \vee s}^t d\Phi(h)(r) - \int_{vs}^{s\wedge (tv)} d\Phi(h)(r) \Big|,
\end{align}
for at least some choice of $v\in [\tfrac{1}{c}, 1]$ and $t, s\in [0,1]$. 

We know that a solution to the ODE $\Phi(h)$ exists uniquely  and has finite supremum. Therefore we can easily conclude that there exists constants $M_1$ and $M_2$ such that
\begin{align*}
\Big| \int_s^t d\Phi(h)(r) \Big| &\leq \Big| \int_s^t b(\Phi(h)(r), \delta_{\Phi(h)(r)}) dr + \int_s^t \sigma( \Phi(h)(r), \delta_{\Phi(h)(r)}) dh(r) \Big|
\\
&\leq M_1 \sqrt{|t-s|} ||\dot{h}||_2 + M_2 |t-s|.
\end{align*}
It follows from \eqref{eq:Lemma2.1} that
$$
\|\dot{h}\|_2 
\geq \frac{\frac{{\sqrt{\varepsilon}}}{2} |t-s|^\alpha - M_2 \Big( \Big| t-s\vee(tv) \Big| + \Big| s\wedge(tv)-(sv) \Big|\Big)}{M_1 \Big( \Big| t-s\vee(tv) \Big|^{\frac{1}{2}} + \Big| s\wedge (tv)-(sv) \Big|^{\frac{1}{2}} \Big)}. 
$$
Let us consider first the case where $s<(tv)$. 
\begin{align*}
\|\dot{h}\|_2 
&
\geq \frac{\frac{{\sqrt{\varepsilon}}}{2} \Big|t-s \Big|^\alpha - M_2 \Big|(t+s)(1-v) \Big|}{M_1 \Big| (\sqrt{t} + \sqrt{s}) \sqrt{1-v} \Big|}
\\
&
\geq \frac{{\sqrt{\varepsilon}} |1-\tfrac{1}{c}|^\alpha}{4M_1 | \sqrt{1-\tfrac{1}{c}} |} - \frac{M_2}{M_1}\sqrt{1-\tfrac{1}{c}}, 
\end{align*}
so for $c$ small enough we have 
$
\|\dot{h}\|_2 \geq 1+ \delta
$ 
for any choice of $\delta>0$. 

Secondly, consider the case where $s>(tv)$

\begin{align*}
\|\dot{h}\|_2 
&
\geq \frac{\frac{{\sqrt{\varepsilon}}}{2} \Big|t(1-\frac{s}{t})\Big|^\alpha - M_2 \Big( \Big| t(1-\frac{s}{t}) \Big| (1+v) \Big)}{2M_1 \Big( \Big| t(1-\frac{s}{t})  \Big|^{\frac{1}{2}}  \Big)}
\\
&
\geq \frac{{\sqrt{\varepsilon}} |1-\tfrac{1}{c}|^\alpha}{4M_1 | \sqrt{1-\tfrac{1}{c}} |} - \frac{M_2}{M_1}\sqrt{1-\tfrac{1}{c}}, 
\end{align*}
and taking $c>1$ small enough as before gives 
$
\|\dot{h}\|_2 \geq 1+ 2\delta. 
$ 

Therefore, using Equation \eqref{eq:LDP} we can get
\begin{align*}
\bP[ Z_{c^j} \in b_{\varepsilon} ] 
&\leq \exp\Big( -(\Delta(b_{\varepsilon}) - \delta) \log\log(c^j)\Big)
\\
&
\leq \exp\Big( -(1 + \delta) \log\log(c^j)\Big)
\lesssim \frac{1}{j^{1+\delta}}, 
\end{align*}
and the conclusion of the proof is straightforward by Borel Cantelli. 
\end{proof}

We are now able to prove the main theorem. 

\begin{proof}[Proof of Theorem \ref{theorem:StrassensLaw}]
The proof is divided into two parts: 

\emph{Step 1. Relative Compactness.} For any $c>1$, there will exist $j\in \bN$ such that $c^{j-1}< u < c^{j}$
\begin{align}
\label{eq:5.Thm1}
&d_\alpha(Z_u, K) \leq d_\alpha (Z_{c^j}, K)
\\
\label{eq:5.Thm2_Thm3}
&\quad + || \Gamma_{\phi(u)} \circ \Gamma_{\phi(c^j)^{-1}} (Z_{c^j}) - Z_{c^j})||_\alpha
+|| Z_u - \Gamma_{\phi(u)} \circ \Gamma_{\phi(c^j)^{-1}} (Z_{c^j}) ||_\alpha,
\end{align}
where $j$ is chosen so that $c^{j-1} \leq u \leq c^j$. 

Lemma \ref{lemma:5.1} with $j$ large enough ensures that \eqref{eq:5.Thm1} is bounded by $\frac{{\sqrt{\varepsilon}}}{3}$. From Lemma \ref{lemma:5.1}, we have that $Z_{c^j}$ is bounded, since $\forall \delta>0$,
\begin{align*}
1 \geq \frac{\phi(u)}{\phi(c^{j})} \geq \frac{\phi(c^{j-1})}{\phi(c^j)} \geq  \frac{(1-\delta)}{\sqrt{c}},
\end{align*}
for $j$ large enough. Choosing $1<c$ small enough, we can use the forth part of Definition \ref{dfn:SystemContraction} to get that the 1st term in \eqref{eq:5.Thm2_Thm3} is less than $\frac{{\sqrt{\varepsilon}}}{3}$. Lemma \ref{lemma:5.2} bounds the 2nd term of \eqref{eq:5.Thm2_Thm3} by $\frac{{\sqrt{\varepsilon}}}{3}$. 

Therefore, we conclude that the set $\{ Z_u: u>3\}$ is relatively compact (and hence we have convergence in probability). 

\emph{Step 2. The set of limit points.} Let $\Phi(h)\in K$ so that $\frac{||\dot{h}||_2^2}{2}<1$. Then for $\varepsilon>0$ and $\beta>0$, we define the sets
$$
E_j 
=\Big\{ \Big\| \frac{\cW_{c^j} (t)}{\sqrt{ \log\log(c^j)}} - h \Big\|_\infty \leq \beta \Big\} 
\quad\textrm{and}\quad
F_j 
= \Big\{ \Big\| Z_{c^j} - \Phi(h)\Big\|_\alpha \leq {\sqrt{\varepsilon}} \Big\}.
$$
Using Proposition \ref{HolderProp}, we have that for $j$ large enough and $\alpha$ small enough that
\begin{equation}
\label{eq:5.Lemma3}
\bP[E_j] - \bP[F_j] = \bP \Big[ E_j \cap F_j^c \Big] \leq \exp\Big( -2 \log \log(c^j)\Big) \lesssim \frac{1}{j^2}.
\end{equation}
Strassen's Law tells us that $\bP(\limsup E_j) = 1$, see \cite{strassen1964invariance}. Therefore
$
\sum_{j} \bP [E_j ] = \infty.
$
However, by Equation \eqref{eq:5.Lemma3} we also have
\begin{align*}
\sum_{j} \Big(\bP[E_j] - \bP[F_j]\Big) < \infty
&\quad \Rightarrow \quad 
\sum_{j} \bP\Big[F_j\Big] = \infty
\\
&
\quad \Rightarrow \quad 
\bP\Big[ \Big\| Z_{c^j}  - \Phi(h) \Big\|_\alpha < {\sqrt{\varepsilon}} \mbox{ i.o.}\Big] = 1,
\end{align*}
the latter following from Borel-Cantelli.

Finally since $(c^j_{j\in\bN})$ is just a subsequence of $(m)_{m\in\bN}$, the result can be extended to the conclusion. 

\end{proof}

\appendix

\section{A collection of auxiliary results}


\subsection{Classical Large Deviation Principles}
The following lemma corresponds to \cite[Lemma 5.6.18]{DemboZeitouni2010}.
\begin{lemma}
\label{eurodance}
Let $(b_t)_t,(\sigma_t)_t$ be progressively measurable processes. Let
\begin{equation}
\label{truc}
dz_t=b_tdt+{\sqrt{\varepsilon}}\sigma_tdW(t)\,\quad \textrm{where $z_0$ is deterministic.}
\end{equation}
Let $\tau\in[0,1]$ be a stopping time with respect to the filtration of $\left(W(t)\right)_{t\in[0,1]}$. Suppose that the coefficients of the diffusion matrix are uniformly bounded, and for some constants $M$, $B$, $\rho$ and any $t\in[0, \tau_1]$,
\begin{equation*}
\left|\sigma_t\right|\leq M\left(|z_t|^2+\rho^2\right)^{\frac{1}{2}}\,,
\qquad\textrm{and}\qquad
\left|b_t\right|\leq B\left(|z_t|^2+\rho^2\right)^{\frac{1}{2}}\,.
\end{equation*}
Then, for any $\delta>0$ and any $\varepsilon\leq1$,

\begin{equation*}
\varepsilon\log\Big(\mathbb{P}\big[\sup_{[0,\tau_1]}|z_t|\geq\delta\big]\Big)\leq B+M^2\left(1+\frac{d}{2}\right)+\log\left(\frac{\rho^2+|z_0|^2}{\rho^2+\delta^2}\right)\,.
\end{equation*}
In particular, if $z_0=0$:
\begin{equation*}
\varepsilon\log\Big(\mathbb{P}\big[\sup_{[0,\tau_1]}|z_t|\geq\delta\big]\Big)\leq B+M^2\left(1+\frac{d}{2}\right)+\log\left(\frac{\rho^2}{\rho^2+\delta^2}\right)\,.
\end{equation*}

\end{lemma}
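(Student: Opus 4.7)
\emph{Plan.} The inequality is a classical Freidlin--Wentzell-style exponential estimate; the idea is to apply It\^o's formula to the logarithm of $V_t := |z_t|^2 + \rho^2$, build an exponential super-martingale out of it, and conclude via Doob's maximal inequality combined with a Chernoff-type bound.

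First, I would apply It\^o to $\log V_t$, obtaining the decomposition $d \log V_t = A_t\,dt + dN_t$, where
\begin{equation*}
A_t = \frac{2 z_t^{T} b_t}{V_t} + \frac{\varepsilon\,\mathrm{Tr}(\sigma_t \sigma_t^{T})}{V_t} - \frac{2\varepsilon\, |\sigma_t^{T} z_t|^2}{V_t^2}, \qquad d\langle N\rangle_t = \frac{4\varepsilon\, |\sigma_t^{T} z_t|^2}{V_t^2}\,dt.
\end{equation*}
The hypotheses together with Cauchy--Schwarz (using $|z_t| \leq V_t^{1/2}$ and $|\sigma_t^{T} z_t|^2 \leq |\sigma_t|^2 |z_t|^2$) then yield, for every $t \in [0,\tau_1]$,
\begin{equation*}
\frac{|z_t^{T} b_t|}{V_t} \leq B, \qquad \frac{\mathrm{Tr}(\sigma_t \sigma_t^{T})}{V_t} \leq d M^2, \qquad \frac{|\sigma_t^{T} z_t|^2}{V_t^2} \leq M^2.
\end{equation*}

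Next, I would consider the process $\mathcal{E}_t := V_{t \wedge \tau_1}^{1/\varepsilon}\exp\!\big(-K(t \wedge \tau_1)/\varepsilon\big)$. A direct application of It\^o to $V^{1/\varepsilon}$ (equivalently, exponentiating $\log V_t/\varepsilon$) shows that the finite-variation part of $\mathcal{E}_t$ acquires an additional term $2|\sigma_t^{T} z_t|^2/(\varepsilon V_t^2)$ coming from the It\^o correction; this cancels most of the $-2\varepsilon |\sigma^{T} z|^2/V^2$ already inside $A_t$, leaving a residue $2(1-\varepsilon)|\sigma^{T} z|^2/V^2$. Inserting the three bounds of the previous paragraph, the drift of $\mathcal{E}_t$ is pointwise non-positive as soon as $K \geq B + M^2(1 + d/2)$; with that choice, after a routine localisation by $\sigma_n := \inf\{t : V_t \geq n\} \wedge \tau_1$ (to pass from a local-martingale identity to an inequality and back via monotone convergence), $(\mathcal{E}_t)$ becomes a genuine non-negative super-martingale with $\mathcal{E}_0 = V_0^{1/\varepsilon}$.

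Finally, using $e^{-K(t \wedge \tau_1)/\varepsilon} \geq e^{-K/\varepsilon}$ together with Doob's maximal inequality for non-negative super-martingales gives
\begin{equation*}
\mathbb{P}\!\left[ \sup_{t \in [0,\tau_1]} V_t \geq \rho^2 + \delta^2 \right] \leq \left( \frac{\rho^2 + |z_0|^2}{\rho^2 + \delta^2} \right)^{1/\varepsilon} e^{K/\varepsilon};
\end{equation*}
taking logarithms, multiplying by $\varepsilon$, and rearranging yields the announced inequality, with the special case $z_0 = 0$ immediate. The only genuine difficulty lies in the book-keeping of the previous paragraph: exponentiating $\log V_t / \varepsilon$ produces an It\^o correction of size $O(1/\varepsilon)$ that must be largely cancelled by the $\varepsilon$-weighted term already in $A_t$, and the surviving constant must be matched exactly against $B + M^2(1 + d/2)$; the localisation, Doob, and Chernoff steps are entirely standard.
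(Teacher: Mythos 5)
Your route --- It\^o's formula for $\log V_t$ with $V_t=|z_t|^2+\rho^2$, exponentiation to the compensated supermartingale $\mathcal{E}_t=V_{t\wedge\tau_1}^{1/\varepsilon}e^{-K(t\wedge\tau_1)/\varepsilon}$, localisation, and Doob's maximal inequality --- is exactly the classical argument; the paper gives no proof of this lemma and simply cites \cite[Lemma 5.6.18]{DemboZeitouni2010}, whose proof is this same scheme, and your It\^o bookkeeping (the residue $2(1-\varepsilon)|\sigma_t^Tz_t|^2/V_t^2$) and the localisation/Doob/Chernoff steps are fine. The genuine gap is in the constant at the decisive step. Inserting your own three bounds, the drift bracket of $\mathcal{E}_t$ is
\begin{equation*}
\frac{2z_t^{T}b_t}{V_t}+\varepsilon\,\frac{\mathrm{Tr}(\sigma_t\sigma_t^{T})}{V_t}+2(1-\varepsilon)\,\frac{|\sigma_t^{T}z_t|^2}{V_t^2}
\;\leq\; 2B+\varepsilon\,d M^2+2(1-\varepsilon)M^2,
\end{equation*}
so non-positivity forces $K\geq 2B+\varepsilon dM^2+2(1-\varepsilon)M^2$, whose supremum over $\varepsilon\in(0,1]$ is $2B+M^2\max(d,2)\leq 2B+M^2(d+2)$ --- roughly twice the value $B+M^2(1+\tfrac d2)$ you assert suffices (note in particular that $d|z_t|^2$ produces $2z_t^Tb_t$, hence $2B$, not $B$).

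This is not a removable slack: with the printed constant the statement is actually false. Take $d=1$, $\sigma\equiv 0$, $z_0=0$, $b_t=B\sqrt{z_t^2+\rho^2}$; then $z_t=\rho\sinh(Bt)$ deterministically, and with $\delta=\rho\sinh B$ the left-hand side equals $0$ while $B+\log\bigl(\rho^2/(\rho^2+\delta^2)\bigr)=B-2\log\cosh B<0$ once $B>2\log 2$. So no sharpening of your estimates can reach $K=B+M^2(1+\tfrac d2)$; the paper's transcription of Dembo--Zeitouni has inadvertently halved the constant (their version carries $2B+M(d+2)$-type constants, and indeed the paper later invokes the lemma with yet another normalisation, a $\tfrac12\log$ term). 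In summary: right method, essentially identical to the cited source's proof, but the claim ``non-positive as soon as $K\geq B+M^2(1+\tfrac d2)$'' is an error; what your argument legitimately establishes is the bound with $2B+M^2(d+2)$ in place of $B+M^2(1+\tfrac d2)$, which is still an $\varepsilon$-independent constant and is all that the applications of this lemma in the paper require.
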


We also need \cite[Lemma 5.2.1]{DemboZeitouni2010}.
\begin{lemma}
\label{camion}
For any dimension $d'$, and any $\tau,\varepsilon,\delta$,
\begin{equation*}
\mathbb{P}\left[\sup_{0\leq t\leq \tau}\left|{\sqrt{\varepsilon}} W(t)\right|\geq\delta\right]\leq4d'\exp\left(-\frac{\delta^2}{2d\tau\varepsilon}\right)\,.
\end{equation*}
\end{lemma}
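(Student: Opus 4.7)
The plan is to reduce the multidimensional bound to a one-dimensional reflection principle argument. Writing the Brownian motion in its coordinate form $W = (W^{(1)}, \dots, W^{(d')})$, note that $|W(t)|^2 = \sum_{i=1}^{d'} |W^{(i)}(t)|^2 \leq d' \cdot \max_{1 \leq i \leq d'} |W^{(i)}(t)|^2$. Consequently, the event $\{\sup_{0 \leq t \leq \tau} |\sqrt{\varepsilon}\, W(t)| \geq \delta\}$ is contained in the union $\bigcup_{i=1}^{d'} \{\sup_{0 \leq t \leq \tau} |\sqrt{\varepsilon}\, W^{(i)}(t)| \geq \delta / \sqrt{d'}\}$, and a simple union bound therefore reduces the estimate to controlling each scalar coordinate individually.

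For each coordinate I would invoke the reflection principle, which gives $\mathbb{P}[\sup_{0 \leq t \leq \tau} W^{(i)}(t) \geq a] = 2\mathbb{P}[W^{(i)}(\tau) \geq a]$. By symmetry, $\mathbb{P}[\sup_{0 \leq t \leq \tau} |W^{(i)}(t)| \geq a] \leq 2 \mathbb{P}[\sup_{0 \leq t \leq \tau} W^{(i)}(t) \geq a] = 4 \mathbb{P}[W^{(i)}(\tau) \geq a]$. Then the standard Gaussian tail estimate $\mathbb{P}[W^{(i)}(\tau) \geq a] \leq \exp(-a^2/(2\tau))$, applied with $a = \delta/\sqrt{\varepsilon d'}$, bounds each summand by $4 \exp(-\delta^2/(2 d' \tau \varepsilon))$.

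Summing the $d'$ resulting terms produces $\mathbb{P}[\sup_{0 \leq t \leq \tau} |\sqrt{\varepsilon}\, W(t)| \geq \delta] \leq 4 d' \exp(-\delta^2/(2 d' \tau \varepsilon))$, which matches the stated bound (reading the $d$ in the exponent as a typographical stand-in for $d'$, the Brownian dimension). There is no substantive obstacle here; the only item requiring mild care is keeping the constants consistent through the union bound, specifically the factor $\sqrt{d'}$ coming from the comparison between the Euclidean norm and the $\ell^\infty$ norm, which is precisely what produces the factor $d'$ appearing both as the prefactor and in the denominator of the exponent.
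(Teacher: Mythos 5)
Your proof is correct: the reduction $|W(t)|^2\le d'\max_{1\le i\le d'}|W^{(i)}(t)|^2$, the union bound over coordinates, the reflection principle together with symmetry, and the Chernoff--Gaussian tail estimate yield exactly $4d'\exp\big(-\delta^2/(2d'\tau\varepsilon)\big)$, and reading the $d$ in the exponent as the Brownian dimension $d'$ is indeed the intended interpretation. The paper states this lemma without proof, citing \cite[Lemma 5.2.1]{DemboZeitouni2010}, whose proof is essentially the same standard argument you give, so there is nothing further to reconcile.
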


We also need \cite[Theorem 4.2.23]{DemboZeitouni2010}.
\begin{proposition}
\label{blablabla}
Let $\left\{\mu_\varepsilon\right\}$ be a family of probability measures that satisfies the LDP with a good rate function $I$ on a Hausdorff topological space $\mathcal{X}$, and for $m\in\bN$, let $f_m$ be continuous functions from $\mathcal{X}$ to $\mathcal{Y}$, where $(\mathcal{Y},d)$ is a metric space. Assume there exists a measurable map $f$ from $\mathcal{X}$ to $\mathcal{Y}$ such that for every $\alpha<\infty$ :
\begin{equation*}
\limsup_{m\to\infty}\sup_{x\,:\,I(x)\leq\alpha}\,d\big(f_m(x),f(x)\big)=0\,.
\end{equation*}
Then any family of probability measures $\left\{\widetilde{\mu_\varepsilon}\right\}$ for which $\left\{\mu_\varepsilon\circ f_m^{-1}\right\}$ are exponentially
good approximations satisfies the LDP in $\mathcal{Y}$ with the good rate function $I'(y):=\inf\{I(x)\,:\,y = f(x)\}$.
\end{proposition}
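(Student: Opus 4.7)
The plan is to reduce to the standard contraction principle and then transfer the LDP through the exponentially-good approximation. First, since each $f_m$ is continuous and $\{\mu_\varepsilon\}$ satisfies an LDP with good rate $I$, the standard contraction principle immediately yields that $\{\mu_\varepsilon \circ f_m^{-1}\}$ satisfies an LDP on $\mathcal{Y}$ with good rate function $I_m(y) := \inf\{I(x): f_m(x) = y\}$. The remainder of the argument is to combine this $m$-indexed family of LDPs with the exponential approximation hypothesis to produce an LDP for $\{\widetilde{\mu_\varepsilon}\}$ with rate $I'$, and to verify that $I'$ is indeed a good rate function.

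For the upper bound on a closed set $F \subset \mathcal{Y}$, I would use the splitting
\begin{equation*}
\widetilde{\mu_\varepsilon}(F) \leq \mu_\varepsilon(f_m^{-1}(F^\delta)) + \bP[d(f_m(X_\varepsilon), \widetilde{X_\varepsilon}) > \delta],
\end{equation*}
where $F^\delta$ is the closed $\delta$-fattening of $F$. Take $\varepsilon\log(\cdot)$, apply the LDP upper bound from Step 1 to the first term and the exponential approximation to the second term; then pass $\varepsilon\to 0$, $m\to\infty$, $\delta\to 0$ in that order. The key identity to establish is
\begin{equation*}
\liminf_{\delta\to 0}\liminf_{m\to\infty}\inf_{y\in F^\delta} I_m(y) \geq \inf_{y\in F} I'(y),
\end{equation*}
and this is the main technical obstacle. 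I would argue by contradiction: a nearly-minimising sequence $x_m$ with $I(x_m) \leq \alpha$ and $f_m(x_m) \in F^\delta$ sits in the compact level set $\{I \leq \alpha\}$ by goodness of $I$, so along a subsequence $x_m\to x_\ast$ with $I(x_\ast) \leq \liminf I(x_m)$ by lower semicontinuity; uniform approximation on $\{I \leq \alpha\}$ gives $d(f_m(x_m), f(x_\ast)) \to 0$, so $f(x_\ast)\in F^{2\delta}$, and closedness of $F$ together with $\delta\to 0$ yields $f(x_\ast)\in F$, whence $I(x_\ast) \geq I'(f(x_\ast)) \geq \inf_{y\in F} I'(y)$, as required.

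For the lower bound on an open $G\subset\mathcal{Y}$, fix $y\in G$ with $I'(y)<\infty$ and $\eta>0$; choose $x\in f^{-1}(\{y\})$ with $I(x) \leq I'(y)+\eta$. Pick $r>0$ small enough that $B(y,r)\subset G$. Since $I(x)$ is finite, the uniform convergence on $\{I \leq I(x)\}$ yields $d(f_m(x), y) < r/2$ for all $m$ large. Then $B(y,r/2)\ni f_m(x)$ is an open set and the LDP lower bound from Step 1 delivers $\liminf_\varepsilon \varepsilon\log\mu_\varepsilon(f_m^{-1}(B(y,r/2))) \geq -I_m(f_m(x)) \geq -I(x)$. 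Since $\{f_m(X_\varepsilon)\in B(y,r/2)\} \cap \{d(f_m(X_\varepsilon), \widetilde{X_\varepsilon}) < r/2\} \subset \{\widetilde{X_\varepsilon}\in G\}$, a union-bound plus the exponential approximation converts this into $\liminf_\varepsilon \varepsilon\log\widetilde{\mu_\varepsilon}(G) \geq -I(x) \geq -I'(y) - \eta$; taking infima over $y\in G$ and letting $\eta\downarrow 0$ concludes.

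Finally, the fact that $I'$ is itself a good rate function is handled by an analogous compactness argument: any sequence $y_n$ with $I'(y_n)\leq \alpha$ admits near-preimages $x_n$ with $I(x_n) \leq \alpha + 1/n$, goodness of $I$ produces a cluster point $x_\ast$, and a diagonal extraction with the uniform-convergence hypothesis shows $y_n = f(x_n)$ converges to $f(x_\ast)$ along a subsequence with $I'(f(x_\ast)) \leq I(x_\ast) \leq \alpha$. The recurring subtlety throughout the proof is that $f$ is assumed only measurable, not continuous, so every ``continuity-type'' step must be extracted from the uniform approximation on level sets of $I$; this is what ties goodness of $I$ to the conclusion.
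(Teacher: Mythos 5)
The paper does not prove this proposition at all: it is quoted verbatim as \cite[Theorem 4.2.23]{DemboZeitouni2010} (the extended contraction principle) and used as a black box, so there is no in-paper argument to compare against. Your proposal is, in substance, the standard proof of that theorem: contraction principle for each $f_m$, transfer of the upper and lower bounds through the exponentially good approximation, and identification of the limiting rate function with $I'$ via compactness of the level sets of $I$. The structure and all the key estimates are right.

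Two points deserve to be made explicit. First, in the step where you claim $d(f_m(x_m),f(x_\ast))\to 0$ from $x_m\to x_\ast$, you need both $d(f_m(x_m),f(x_m))\to 0$ (uniform convergence on the level set) \emph{and} $d(f(x_m),f(x_\ast))\to 0$; the latter is not free since $f$ is only measurable. It follows because $f$ restricted to the compact level set $\{I\leq\alpha\}$ is a uniform limit of continuous functions and hence continuous there — this is exactly the mechanism you allude to in your closing remark, but it should be invoked at that point rather than left implicit (the same remark applies to the goodness argument, where you need $y_n=f(x_n)\to f(x_\ast)$). Second, $\mathcal{X}$ is only assumed Hausdorff, so compactness of $\{I\leq\alpha\}$ does not yield sequential compactness; your subsequence extractions should be replaced by subnets (lower semicontinuity and the continuity of $f$ on the level set pass to nets without change). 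Neither issue is fatal — and in the paper's application $\mathcal{X}$ is a metric path space, where sequences suffice — but for the proposition as stated they are genuine gaps in rigour rather than mere omissions of routine detail.
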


\subsection{Large Deviation Principles in path space topologies}
The following results are of their own independent interest and can be found in \cite[\emph{Lemme} 1 p.196]{arous1994grandes} with token proofs. We provide a full proof for the benefit of the reader. The extension to $[0,T]$ is straightforward.
\begin{lemma}[\cite{arous1994grandes}]
\label{Lemme1}
Let $(W(t))_{t\in[0,1]}$ be a $d'$-dimensional Brownian motion. Then, there exists a constant $C>0$ which is independent of $m$ such that $\forall u, v>0$
$$
\bP\Big[ \|W\|_\alpha \geq u, \|W\|_\infty\leq v\Big] 
\leq 
C \max\Big(1, \Big(\frac{u}{v}\Big)^{1/ \alpha}\Big) \exp\Big( \frac{-1}{C} \frac{u^{1/\alpha}}{v^{(1/\alpha)-2}}\Big).
$$
\end{lemma}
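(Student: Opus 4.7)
The plan is to exploit the fact that the supremum bound $\|W\|_\infty\le v$ forces the H\"older supremum to be attained on small time increments, and then to apply Brownian scaling together with Fernique-type Gaussian concentration on each small window. Set $h:=(2v/u)^{1/\alpha}$. First I would observe that on $\{\|W\|_\infty\le v\}$ one has $|W(t)-W(s)|\le 2v$ for all $s,t\in[0,1]$, so whenever $|t-s|>h$ the ratio $|W(t)-W(s)|/|t-s|^\alpha$ is strictly less than $u$. Hence the joint event in question forces the H\"older supremum to be realised at some pair $s,t$ with $|t-s|\le h$, i.e.
\[
\bP\bigl[\|W\|_\alpha\ge u,\ \|W\|_\infty\le v\bigr]
\le \bP\Bigl[\sup_{|t-s|\le h}\tfrac{|W(t)-W(s)|}{|t-s|^\alpha}\ge u\Bigr].
\]
If $h\ge 1$ (equivalently $u\le 2v$) the statement is trivial with constant absorbed in $C$, which is where the factor $\max(1,(u/v)^{1/\alpha})$ comes from; so from here on I assume $h<1$.

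Next I would cover $[0,1]$ by the overlapping family of intervals $J_i=[ih,(i+2)h]\cap[0,1]$ for $i=0,1,\dots,N-1$ with $N=\lceil 1/h\rceil\lesssim (u/v)^{1/\alpha}$, so that any pair $(s,t)$ with $|t-s|\le h$ lies in some $J_i$. A union bound together with the stationarity and Brownian scaling ($W|_{[0,2h]}\stackrel{d}{=}(2h)^{1/2}\tilde W|_{[0,1]}$ after rescaling time) gives
\[
\bP\Bigl[\sup_{|t-s|\le h}\tfrac{|W(t)-W(s)|}{|t-s|^\alpha}\ge u\Bigr]
\le N\cdot \bP\bigl[\|\tilde W\|_\alpha \ge u(2h)^{\alpha-1/2}\bigr],
\]
where $\tilde W$ is a standard $d'$-dimensional Brownian motion on $[0,1]$.

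Now I would invoke Fernique's theorem, which for a standard Brownian motion on $[0,1]$ and any $\alpha\in(0,1/2)$ yields a constant $C_\alpha>0$ with $\bP[\|\tilde W\|_\alpha\ge\lambda]\le C_\alpha\exp(-\lambda^2/C_\alpha)$ for $\lambda\ge 0$. Substituting $\lambda=u(2h)^{\alpha-1/2}$ and unwinding using $h=(2v/u)^{1/\alpha}$ gives
\[
\lambda^2=u^2(2h)^{2\alpha-1}=c_\alpha\,\frac{u^{1/\alpha}}{v^{1/\alpha-2}},
\]
for an explicit $c_\alpha>0$. Combining with the polynomial prefactor $N\lesssim(u/v)^{1/\alpha}$ and absorbing all numerical constants into a single $C$ produces the announced inequality.

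The routine technical obstacle is keeping track of constants so that a single $C$ works on both sides (inside the $\max$, inside the exponent, and in the $1/C$). A second, more conceptual, point of care is the localisation step: it is essential that the H\"older norm equals the supremum of $|W(t)-W(s)|/|t-s|^\alpha$ \emph{on the same increments} which are constrained by $\|W\|_\infty\le v$; without this, the supremum and H\"older norms would have to be handled by a correlated Gaussian inequality rather than by the simple reduction used here. Everything else (the covering, the scaling, and the Fernique bound) is standard.
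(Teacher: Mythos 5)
Your proof is correct in substance but follows a genuinely different route from the paper. The paper works on the Ciesielski/Schauder side: it writes $\|W\|_\alpha=\sup_{p,m}2^{p(\alpha-1/2)}|W_{pm}|$ with $W_{pm}=2^{p/2}\big|2W\big(\tfrac{2m-1}{2^{p+1}}\big)-W\big(\tfrac{m}{2^{p}}\big)-W\big(\tfrac{m-1}{2^{p}}\big)\big|$, observes that on $\{\|W\|_\infty\le v\}$ one has $|W_{pm}|\le 4v\,2^{p/2}$, so only dyadic levels $p\ge p_0$ with $2^{\alpha p_0}\ge u/(4v)$ can contribute, and then applies an exponential Chebyshev bound to these Gaussian coefficients, the exponent coming from $2^{p_0(1-2\alpha)}\ge (u/(4v))^{1/\alpha-2}$. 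You instead localise in the time domain: the sup-norm constraint forces the H\"older quotient to be driven by increments of length at most $h\asymp(v/u)^{1/\alpha}$, you cover $[0,1]$ by $\asymp(u/v)^{1/\alpha}$ overlapping windows, rescale each window by Brownian scaling, and invoke Fernique's theorem for the H\"older norm on $[0,1]$. Both give the same exponent; the paper's dyadic argument is more elementary (only scalar Gaussian tails) and closer to the cited reference, while yours uses Fernique as a black box but makes the polynomial prefactor $\max\big(1,(u/v)^{1/\alpha}\big)$ appear transparently as the number of windows in the union bound, a factor that is rather hidden in the paper's treatment of the supremum over $(p,m)$.

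Two points need tightening. First, the case $h\ge 1$ (i.e.\ $u\le 2v$) is not trivial in the sense of bounding the probability by one: for $u\asymp v$ large the right-hand side is exponentially small. In that regime you should apply the Fernique bound directly on $[0,1]$, $\bP[\|W\|_\alpha\ge u]\le C_\alpha e^{-u^2/C_\alpha}$, and note that $u^{1/\alpha}/v^{1/\alpha-2}=u^2(u/v)^{1/\alpha-2}\le 2^{1/\alpha-2}u^2$ there, so the exponents match after enlarging $C$; the factor $\max(1,(u/v)^{1/\alpha})$ then simply majorises the window count in both regimes rather than arising from the ``trivial'' case. Second, your localisation step implicitly uses that the H\"older supremum is attained (true a.s.\ because $W$ is $\alpha'$-H\"older for some $\alpha'\in(\alpha,1/2)$, so the quotient vanishes near the diagonal); alternatively take $h=(4v/u)^{1/\alpha}$ so that increments longer than $h$ have quotient at most $u/2$, which removes any boundary issue. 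With these one-line repairs your argument is a complete and valid alternative proof.
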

\begin{proof}[Proof of Lemma \ref{Lemme1}]
Consider a Brownian motion $W$ satisfying the constraint $\|W\|_\infty\leq v$. We use methods from \cite{herrmann2013stochastic} to represent the $\alpha$-H\"older norm in terms of a supremum of Fourier coefficients generated by Schauder functions. By direct calculation, one can dominate the Fourier coefficients 
$$
|W_{pm}|=2^{p/2}\Big|2W\Big(\frac{2m-1}{2^{p+1}}\Big)-W\Big(\frac{m}{2^p}\Big)-W\Big(\frac{m-1}{2^p}\Big)\Big| \leq 2^{p/2} 4v. 
$$
If we also restrict that $\|W\|_\alpha = \sup_{p,m} |W_{pm}| 2^{p(\alpha-1/2)}\geq u$ and search for values of $p$ and $m$ which do not yield a contradiction. Observe that we require $u\geq 4v 2^{\alpha p}$. If we consider a $p$ where this was not true, we would have that $W_{pm}<u$. The supremum of all $W_{pm}$ is still be greater than $u$, but this value of $p$ could be removed from the collection over which the supremum is taken over without affecting the measure of the event. Let $p_0$ be the least such relevant $p$, defined as $p_0:=\inf \left\{ p\in \bN; 2^{\alpha p}\geq {u}/({4v})\right\}$. Then for an arbitrary choice of $\lambda>0$, we have
\begin{align*}
&\bP[ \|W\|_\alpha \geq u, \|W\|_\infty\leq v] 
\\
&\qquad 
=\bP[ \sup_{p\geq p_0, m} 2^{p(\alpha-1/2)} |W_{pm}| \geq u] 
\leq \frac{ \sup_{p\geq p_0} \bE[ \exp(\lambda 2^{p(\alpha-1/2)} |W_{pm}|)]}{\exp(\lambda u)} 
\\
&\qquad
\leq \sup_{p\geq p_0} 2\exp\Big( \frac{\lambda^2 2^{p(2\alpha-1)}}{2} -\lambda u\Big) 
\leq 2 \exp\Big( \frac{-u^2 2^{p_0(1-2\alpha)}}{2}\Big),
\end{align*}
where for the last line we choose $\lambda=u2^{p(1-2\alpha)}$ to minimize the expression (since $\lambda$ is arbitrary). From the definition of $p_0$ we have
$$
2^{p_0(1-2\alpha)} \geq \Big(\frac{u}{4v}\Big)^{\frac{1}{\alpha}-2},
$$
and substituting this in yields the final result. 
\end{proof}
The next lemma iterates on the first, see \cite[\emph{Lemme 2}, p.196]{arous1994grandes}.
\begin{lemma}
\label{Lemme2}
Let $(W(t))_{t\in [0,1]}$ be a $d'$-dimensional Brownian motion. There exists a constant $C'>0$ which is independent of $d'$ and $\alpha$ such that $\forall u>0$ and $\forall K\in C([0,1])$ such that $\|K\|_\infty\leq 1$
$$
\bP\Big[ \|\int_0^\cdot K(s) dW(s)\|_\alpha \geq u, \|K\|_\infty\leq 1\Big] \leq C' \exp\Big( \frac{-u^2}{C'}\Big).
$$
\end{lemma}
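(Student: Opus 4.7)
The plan is to mirror the proof of Lemma~\ref{Lemme1}: first reduce to a pathwise-bounded integrand by stopping, then represent the resulting stochastic integral as a time-changed Brownian motion (via Dambis--Dubins--Schwarz), and finally invoke Lemma~\ref{Lemme1} and Lemma~\ref{camion} to read off the sub-Gaussian H\"older-tail of the underlying Brownian motion.

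First I would localize. Introduce the stopping time $\tau:=\inf\{t\in[0,1]:\|K\|_{\infty,t}>1\}$ with the convention $\tau:=+\infty$ on the empty set, so that $\{\tau>1\}\supseteq\{\|K\|_\infty\leq 1\}$. The stopped process $M_t:=\int_0^{t\wedge\tau}K(s)\,dW(s)$ coincides with $\int_0^t K(s)\,dW(s)$ on $[0,1]$ on the event of interest, and its integrand is pathwise bounded by $1$. Hence
$$
\bP\!\Big[\big\|\textstyle\int_0^\cdot K(s)\,dW(s)\big\|_\alpha\geq u,\ \|K\|_\infty\leq 1\Big]\leq \bP\!\big[\|M\|_\alpha\geq u\big],
$$
and it is enough to bound the right-hand side. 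Working componentwise, each real-valued component $M^i$ is a continuous martingale whose quadratic variation $\langle M^i\rangle_t=\int_0^{t\wedge\tau}\sum_j K_{ij}(s)^2\,ds$ is $C_0$-Lipschitz in $t$ with $C_0=C_0(d')$. By Dambis--Dubins--Schwarz (on a possibly enlarged probability space), $M^i_t=B^i(\langle M^i\rangle_t)$ for a standard Brownian motion $B^i$, and the Lipschitz character of the time-change gives
$$
\frac{|M^i_t-M^i_s|}{|t-s|^\alpha}\leq C_0^{\alpha}\,\frac{|B^i(\langle M^i\rangle_t)-B^i(\langle M^i\rangle_s)|}{|\langle M^i\rangle_t-\langle M^i\rangle_s|^\alpha},
$$
whence $\|M^i\|_{\alpha,[0,1]}\leq C_0^\alpha\,\|B^i\|_{\alpha,[0,C_0]}$. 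A union bound over the $d$ components reduces the problem to estimating $\bP[\|B\|_\alpha\geq v]$ for a real-valued Brownian motion $B$, with $v$ proportional to $u$.

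For the latter I would split $\{\|B\|_\alpha\geq v\}\subset\{\|B\|_\infty>v\}\cup\{\|B\|_\alpha\geq v,\,\|B\|_\infty\leq v\}$ and apply Lemma~\ref{camion} and Lemma~\ref{Lemme1} respectively; with the auxiliary radius chosen equal to $v$, the exponent in Lemma~\ref{Lemme1} becomes $v^{1/\alpha}/v^{1/\alpha-2}=v^2$, so both bounds are of order $\exp(-v^2/\mathrm{const})$ and combine to give the target $C'\exp(-u^2/C')$. The main obstacle I expect is tracking the dimension- and $\alpha$-dependence of the constant $C'$: the componentwise DDS step introduces a $d'$-dependence via the union bound, and $\alpha$-dependence enters through $C_0^\alpha$ and the explicit form of Lemma~\ref{Lemme1}; the stated uniformity of $C'$ in $d'$ and $\alpha$ is best read as saying that these factors can be absorbed into a single $C'$ valid for the intended application. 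A fully hands-on alternative bypassing DDS is to follow Lemma~\ref{Lemme1} verbatim: expand $M$ in the Schauder basis, use the exponential martingale inequality for the stopped process to show that each dyadic coefficient is sub-Gaussian with variance uniformly bounded in the dyadic indices, and conclude by Chernoff plus a union bound---the ensuing series $\sum_{p\geq 0} 2^{p}\exp(-c\,u^2\,2^{p(1-2\alpha)})$ being dominated, for $\alpha<1/2$, by its $p=0$ term.
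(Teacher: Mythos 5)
Your proposal is correct in substance, but it takes a genuinely different route from the paper. The paper argues directly at the level of Schauder coefficients: it writes the H\"older norm of $\int_0^\cdot K(s)\,dW(s)$ as a supremum of coefficients $\int_0^1 H_{pm}(s)K(s)\,dW(s)$, shows by It\^o/Burkholder--Davis--Gundy and induction that all moments of these coefficients are dominated (uniformly in $p,m$) by those of a half-normal law of unit variance, bounds the moment generating function accordingly, and concludes by Chernoff with $\lambda=u$. Your main argument instead localizes via the stopping time $\tau$, applies Dambis--Dubins--Schwarz componentwise to write the stopped integral as a Brownian motion run through a Lipschitz time change, deduces $\|M^i\|_\alpha\leq C_0^\alpha\|B^i\|_{\alpha,[0,C_0]}$ (increments on which the time change is constant contribute nothing), and then gets the Gaussian tail of $\|B\|_\alpha$ by splitting at level $v$ and invoking Lemma \ref{camion} and Lemma \ref{Lemme1} with both radii equal to $v$, which indeed collapses the exponent to $v^2$. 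This buys a short conceptual proof that reuses Lemma \ref{Lemme1} as a black box; the price is explicit control of the constant, which through the union bound over components and the rescaling of the time interval picks up dimension- and $\alpha$-dependence --- but the paper's own constant also depends on $\alpha$ implicitly through the Ciesielski norm equivalence, so your caveat that the stated uniformity of $C'$ must be read loosely applies to both proofs and is harmless for the intended application. Your fallback route (Schauder expansion, exponential martingale inequality giving sub-Gaussian coefficients with variance proxy $1$ uniformly in $p,m$, then Chernoff and a union bound yielding the series $\sum_{p}2^{p}\exp(-c\,u^{2}2^{p(1-2\alpha)})$) is essentially the paper's argument, and in fact tightens it: the paper pulls the supremum over $(p,m)$ outside the expectation rather than performing a union bound, whereas your explicit summation over the dyadic indices makes that step rigorous.
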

\begin{proof}[Proof of Lemma \ref{Lemme2}]
Let $\|K\|_\infty\leq 1$. In the case where $K$ is deterministic, the stochastic integral of $K$ is clearly normally distributed and the result is clear. For $K$ not deterministic it hard to say anything about the probability distribution of the stochastic integral. 

Using the equivalent definition of H\"older norms in terms of a Schauder expansion, we have that 
\begin{align*}
&\bP \Big[ \|\int_0^\cdot K(s) dW(s)\|_\alpha \geq u, \|K\|_\infty \leq 1\Big]
\\
&\qquad 
= \bP \Big[ \sup_{p, m} \Big|\int_0^1 H_{pm}(s)K(s)dW(s)\Big| \geq u, \|K\|_\infty \leq 1\Big]
\\
&\qquad 
\leq \frac{\bE \Big[  \exp\Big( \lambda  \sup_{p, m}\Big|\int_0^1 H_{pm}(s)K(s)dW(s)\Big|\Big) \Big]}{\exp(\lambda u)} 
\\
&\qquad 
\leq  \sup_{p, m} \frac{\bE \Big[  \exp\Big( \lambda \Big|\int_0^1 H_{pm}(s)K(s)dW(s)\Big|\Big) \Big]}{\exp(\lambda u)} 
\end{align*}
where the supremum can come outside the expectation by Beppo Levi Theorem since the random variables are all positive. Temporarily, consider the process $Y(t)=\int_0^t H_{pm}(s)K(s)dW(s)$.  
Using It\^o's formula we get that
$$
\bE[|Y(t)|^n] = \int_0^t \bE\Big[ \frac{n(n-1)}{2} |Y(s)|^{n-2} H_{pm}(s)^2K(s)^2 \Big] ds. 
$$
$Y(t)$ is a martingale, since $H_{pm}$ and $K$ are bounded, with $Y(0)=0$ so $\bE[Y(t)]=0$. By the It\^o Isometry, the second moment of $Y(t)$ is
\begin{align*}
\bE[Y(t)^2] &= \bE\Big[\int_0^t H_{pm}(s)^2K(s)^2ds\Big]
\leq 	\begin{cases}
		0 & 0\leq t\leq\frac{m-1}{2^p}\\
		2^p(t-\frac{m-1}{2^p})& \frac{m-1}{2^p} < t< \frac{m}{2^p}\\
		1 & \frac{m}{2^p} \leq t\leq 1
		\end{cases}
\end{align*}
Therefore, by induction on $n$ we see
\begin{align*}
\bE[Y(t)^{2n}] \leq \begin{cases}
		0 & 0\leq t\leq\frac{m-1}{2^p}\\
		\frac{(2n)!}{n!2^n} (2^p)^n (t-\frac{m-1}{2^p})^n & \frac{m-1}{2^p} < t< \frac{m}{2^p}\\
		\frac{(2n)!}{n!2^n} & \frac{m}{2^p} \leq t\leq 1
				\end{cases}.
\end{align*}
For the odd moments of $|Y(t)|$, we first use the Burkholder-Davies-Gundy Inequality to say
\begin{align*}
\bE[|Y(t)|] 
& 
\leq C_1\bE\Big[ \Big(\int_0^t H_{pm}(s)^2K(s)^2 ds\Big)^{\frac{1}{2}} \Big] 
\\
&
\leq \begin{cases}
		0 & 0\leq t\leq\frac{m-1}{2^p}\\
		C_1 2^{p/2} (t-\frac{m-1}{2^p})^{1/2}& \frac{m-1}{2^p} < t< \frac{m}{2^p} \\
		C_1 & \frac{m}{2^p} \leq t\leq 1
		\end{cases},
\end{align*}
and by induction on $n$ again we see that
\begin{align*}
\bE[|Y(t)|^{2n+1}] \leq \begin{cases}
		0 & 0\leq t\leq\frac{m-1}{2^p}\\
		C_1 n!2^n (t-\frac{m-1}{2^p})^{\frac{2n+1}{2}} 2^{\frac{2n+1}{2}} & \frac{m-1}{2^p} < t< \frac{m}{2^p} \\
		C_1 n! 2^n & \frac{m}{2^p} \leq t\leq 1
		\end{cases}
\end{align*}
Hence $\bE[|Y(t)|^{2n}]\leq (C_1\vee 1) \frac{(2n)!}{n! 2^n}$ and $\bE[|Y(t)|^{2n+1}]\leq (C_1\vee 1) n! 2^n$. 
The upper bounds for these moments are the same as the moments of a Half normal distribution with variance $1$ up to a multiplicative constant. Therefore, we can upper bound the moment generating function of the RV $|Y(1)|$ using the moment generating function of a half normal random variable. If $Z$ is half normally distributed with variance $a$, we have
\begin{align*}
\bE\Big[\exp(\lambda Z)\Big] &= \int_0^\infty \frac{2}{\sqrt{2\pi a^2}} e^{\lambda x} \exp\Big( \frac{-x^2}{2a^2}\Big) dx 
\leq 4 \exp\Big( \frac{\lambda a^2}{2}\Big).
\end{align*}
Therefore 
 $\bE\Big[ \exp\Big( \lambda \Big| \int_0^1 H_{pm}(s)K(s)dW(s)\Big| \Big) \Big] \lesssim \exp\Big(\frac{\lambda^2}{2}\Big)$
and hence
\begin{align*}
\bP\Big[ \|\int_0^\cdot K(s) dW(s)\|_\alpha \geq u, \|K\|_\infty\leq 1\Big] &\lesssim \exp\Big(\frac{\lambda^2}{2}-\lambda u\Big) 
\lesssim \exp\Big( \frac{-u^2}{2}\Big),
\end{align*}
by choosing $\lambda$ to minimize the equation since the choice of $\lambda$ was arbitrary ($\lambda=u$). 
\end{proof}
\begin{lemma}
\label{Lemma3}
Let $\psi\in C^\alpha([0,1])$ 
 with $\psi(0)=0$. Then $\|\psi\|_\infty \leq \|\psi\|_{\alpha}$. 
\end{lemma}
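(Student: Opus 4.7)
The plan is a direct one-line computation from the definition of the H\"older norm, using the hypothesis $\psi(0)=0$.

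First I would fix an arbitrary $t\in[0,1]$ and use $\psi(0)=0$ to write $|\psi(t)|=|\psi(t)-\psi(0)|$. Then by the very definition of the H\"older norm,
\begin{equation*}
|\psi(t)-\psi(0)|\leq \|\psi\|_\alpha\cdot |t-0|^\alpha = \|\psi\|_\alpha\, t^\alpha.
\end{equation*}
Since $t\in[0,1]$ and $\alpha\in(0,1)$, we have $t^\alpha\leq 1$, hence $|\psi(t)|\leq \|\psi\|_\alpha$. Taking the supremum over $t\in[0,1]$ yields $\|\psi\|_\infty\leq \|\psi\|_\alpha$, which is the desired inequality.

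There is no genuine obstacle here; the statement is essentially the observation that the restricted time horizon $[0,1]$ together with the anchoring condition $\psi(0)=0$ converts a local H\"older estimate into a global sup bound. The only thing one needs to check is that $\alpha>0$ and $t\leq 1$ together imply $t^\alpha\leq 1$, which is immediate. The analogous restricted-time version $\|\psi\|_{\infty,t}\leq \|\psi\|_{\alpha,t}$ for $t\in[0,T]$ (used implicitly elsewhere in the paper) would follow by the same argument, replacing $|t|^\alpha$ with $|s|^\alpha\leq T^\alpha$ and absorbing the constant.
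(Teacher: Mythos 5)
Your proof is correct and is essentially the same argument as the paper's: anchor at $\psi(0)=0$, bound $|\psi(t)-\psi(0)|$ by the H\"older seminorm, and use $t\in[0,1]$. If anything, your version with $|t-0|^\alpha\leq 1$ is written slightly more carefully than the paper's displayed computation, which factors through the ratio $|\psi(t)-\psi(0)|/|t-0|$ with exponent $1$ instead of $\alpha$.
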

\begin{proof}
Using that $t\in[0,1]$ one easily computes
\begin{align*}
\|\psi\|_{\infty} 
= 
\sup_{t\in[0,1]} |\psi(t)| 
&
=
\sup_{t\in[0,1]} \frac{|\psi(t)-\psi(0)|\cdot |t-0|}{|t-0|} 
\\
&
\leq
 \sup_{t\in[0,1]} \frac{|\psi(t)-\psi(0)|}{|t-0|} \cdot \sup_{t\in[0,1]} |t-0| 
\leq \|\psi\|_\alpha.
\end{align*}
\end{proof}


\section*{Acknowledgements}
The authors thank the referee for an in-depth review of the initial version of our manuscript which lead to several improvements.



\end{document}